\newtheorem{theorem}{Theorem}[section]
\newtheorem{lemma}[theorem]{Lemma}
\newtheorem{proposition}[theorem]{Proposition}
\newtheorem{question}[theorem]{Question}
\newtheorem{corollary}[theorem]{Corollary}
\theoremstyle{definition}
\theoremstyle{remark}
\newtheorem{remark}[theorem]{Remark}
\numberwithin{equation}{section}
\newcommand{\frakb}{\mathfrak{b}}
\newcommand{\frakc}{\mathfrak{c}}
\newcommand{\frakd}{\mathfrak{d}}
\newcommand{\fraks}{\mathfrak{s}}
\newcommand{\frakx}{\mathfrak{x}}
\newcommand{\frakz}{\mathfrak{z}}
\newcommand{\fraknik}{\mathfrak{nik}}
\newcommand{\frakgr}{\mathfrak{gr}}
\newcommand{\eps}{\varepsilon}
\newcommand{\B}{\mathbb{B}}
\newcommand{\R}{\mathbb{R}}
\newcommand{\G}{\mathbb{G}}
\newcommand{\PP}{\mathbb{P}}
\newcommand{\aA}{\mathcal{A}}
\newcommand{\bB}{\mathcal{B}}
\newcommand{\dD}{\mathcal{D}}
\newcommand{\fF}{\mathcal{F}}
\newcommand{\gG}{\mathcal{G}}
\newcommand{\hH}{\mathcal{H}}
\newcommand{\mM}{\mathcal{M}}
\newcommand{\pP}{\mathcal{P}}
\DeclareMathOperator{\cov}{cov}
\DeclareMathOperator{\id}{id}
\newcommand{\ol}{\overline}
\DeclareMathOperator{\ran}{ran}
\newcommand{\rstr}{\restriction}
\newcommand{\sm}{\setminus}
\newcommand{\sub}{\subseteq}
\DeclareMathOperator{\supp}{supp}
\newcommand{\wh}{\widehat}
\newcommand{\seq}[2]{\big\langle#1\colon\ #2\big\rangle}
\newcommand{\seqi}[1]{\big\langle#1\colon\ i\io\big\rangle}
\newcommand{\seqn}[1]{\big\langle#1\colon\ n\io\big\rangle}
\newcommand{\seqk}[1]{\big\langle#1\colon\ k\io\big\rangle}
\newcommand{\seql}[1]{\big\langle#1\colon\ l\io\big\rangle}
\newcommand{\clopen}[1]{\left[#1\right]}
\newcommand{\ctblsub}[1]{\left[#1\right]^\omega}
\newcommand{\finsub}[1]{\left[#1\right]^{<\omega}}
\newcommand{\iA}{\in\aA}
\newcommand{\io}{\in\omega}
\newcommand{\wo}{{\wp(\omega)}}
\newcommand{\oo}{\omega^\omega}
\newcommand{\cso}{\ctblsub{\omega}}
\newcommand{\ioo}{\in\oo}
\newcommand{\ooi}{(\oo)^\infty}
\newcommand{\iooi}{\in(\oo)^\infty}
\newcommand{\Cantor}{2^\omega}
\newcommand{\noproof}{\hfill$\Box$}
\newcommand{\forces}{\Vdash}
\renewcommand{\SS}{\mathbb{S}}
\begin{document}

\title[Adding a real]{Convergence of measures after adding a real}
\author[D.\ Sobota]{Damian Sobota}
\address{Kurt G\"{o}del Research Center, Institut f\"ur Mathematik, Universit\"at Wien, Kolingasse 14-16, 1090 Wien, Austria.}
\email{ein.damian.sobota@gmail.com}
\urladdr{www.logic.univie.ac.at/~{}dsobota}
\author[L. Zdomskyy]{Lyubomyr Zdomskyy}
\address{Institut f\"ur Diskrete Mathematik und Geometrie, Technische Universit\"at Wien, Wiedner Hauptstra\ss e 8-10/104, 1040 Wien, Austria.}
\email{lzdomsky@gmail.com}
\urladdr{dmg.tuwien.ac.at/zdomskyy}
\thanks{The authors were supported by the Austrian Science Fund FWF, Grants I 3709-N35 and I 4570-N35.}


\begin{abstract}
We prove that if $\mathcal{A}$ is an infinite Boolean algebra in the ground model $V$ and $\mathbb{P}$ is a notion of forcing adding any of the following reals: a Cohen real, an unsplit real, or a random real, then, in any $\mathbb{P}$-generic extension $V[G]$, $\mathcal{A}$ has neither the Nikodym property nor the Grothendieck property. A similar result is also proved for a dominating real and the Nikodym property.
\end{abstract}

\subjclass[2020]{Primary: 03E40, 28A33, 46E15. Secondary: 03E17, 03E75, 28E15.}
\keywords{Grothendieck property, Nikodym property, convergence of measures, forcing, Cohen reals, random reals, dominating reals}


\maketitle

\section{Introduction}

Let $\aA$ be a Boolean algebra. We say that $\aA$ has \textit{the Nikodym property}\footnote{For an equivalent definition of the Nikodym property in terms of bounded sequences of measures, see Lemma \ref{lemma:nik_equiv_forms}.} if every sequence $\seqn{\mu_n}$ of measures on $\aA$ which is pointwise null, i.e. $\mu_n(A)\to 0$ for every $A\iA$, is also weak* null, i.e. $\mu_n(f)\to 0$ for every continuous function $f\in C(St(\aA))$ on the Stone space of $\aA$, and that $\aA$ has \textit{the Grothendieck property} if every weak* null sequence $\seqn{\mu_n}$ of measures on $\aA$ is weakly null, i.e. $\mu_n(B)\to 0$ for every Borel $B\sub St(\aA)$ (see Section 2 for all the necessary terminology). Both of the notions have strong connections to functional analysis---the Nikodym property is closely related to the Uniform Boundedness Principle for locally convex spaces (see \cite{Sch82}), while the Grothendieck property is usually studied in a much more general sense in the context of dual Banach spaces (see \cite{Gro53}, \cite{Sch82} or \cite{Die73}). Nikodym \cite{Nik33} and Dieudonn\'e \cite{Die51} proved that all $\sigma$-complete Boolean algebras have the Nikodym property, while Grothendieck \cite{Gro53} showed that they have also the Grothendieck property. Consequently, e.g., the algebra $\wo$ of all subsets of $\omega$ has both of the properties. On the other hand, no infinite countable Boolean algebra (or, more generally, no Boolean algebra whose Stone space contains a non-trivial convergent sequence) can have the Nikodym property or the Grothendieck property.

Since the findings of Nikodym, Dieudonn\'e, and Grothendieck, many generalizations of the $\sigma$-completeness have been found which still give at least one of the properties, see e.g. \cite{See68}, \cite{Hay81}, \cite{Das81}, \cite{Mol81}, \cite{Sch82}, \cite{Fre84}, \cite{Aiz88}, \cite{Hay01}, \cite{DSsurv}. Unfortunately, none of those generalizations yields a necessary condition which a given Boolean algebra must satisfy in order to have the Nikodym property or the Grothendieck property. One of the reasons behind this is that, due to the result of Koszmider and Shelah \cite{KS13}, each of those generalizations implies also that an infinite Boolean algebra satisfying it contains an independent family of size continuum $\frakc$ and thus itself must be of cardinality at least $\frakc$. Brech \cite{Bre06} however showed that consistently there exists a Boolean algebra of cardinality $\omega_1$ having the Grothendieck property while at the same time $\frakc\ge\omega_2$. A similar result was also obtained by the first author \cite{DSNik} for the Nikodym property. Those two facts imply together that the quest for an algebraic or topological characterization of the Nikodym property or the Grothendieck property is much more demanding and requires using more sophisticated assumptions than mere existence of suprema or upper bounds of antichains in Boolean algebras.

Let us state the result of Brech \cite{Bre06} more precisely. She proved that if $\kappa$ is a cardinal number and $\SS(\kappa)$ is the side-by-side Sacks forcing adding simultaneously $\kappa$ many Sacks reals to the ground model $V$, then in any $\SS(\kappa)$-generic extension $V[G]$ the ground model Boolean algebra $\wo\cap V$ has the Grothendieck property (her argument works in fact for any infinite ground model $\sigma$-complete Boolean algebra, not only for $\wo$). In \cite{SZNik} we showed that a similar theorem may be obtained for the Nikodym property and in \cite{SZForExt} we generalized both of the results by proving that if $\PP$ is a proper notion of forcing satisfying the Laver property and preserving the reals non-meager, then in any $\PP$-generic extension $V[G]$ every ground model $\sigma$-complete Boolean algebra has both the Nikodym property and the Grothendieck property. Recall that the class of forcings satisfying the assumptions of the latter theorem contains such classical notions like the Sacks, side-by-side Sacks, Miller, or Silver(-like) forcing, as well as their countable support iterations (see \cite[Introduction]{SZForExt} for references).

In this paper we follow the path of research described in the previous paragraph and study the case of adding just \textit{one} real to the given model of set theory, however this time the results are  mostly negative. Our main theorem reads to wit as follows.

\begin{theorem}\label{theorem:main}
Let $\aA\in V$ be an infinite Boolean algebra. Let $\PP\in V$ be a notion of forcing adding one of the following reals:
\begin{itemize}
	\item a Cohen real,
	\item an unsplit real, or
	\item a random real.
\end{itemize}
Assume that $G$ is a $\PP$-generic filter over $V$. Then, in $V[G]$, $\aA$ has neither the Nikodym property nor the Grothendieck property.
\end{theorem}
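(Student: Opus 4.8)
The plan is to reduce the theorem to a single combinatorial/analytic statement: the failure of the Nikodym and Grothendieck properties both follow from producing a "Josefson--Nissenzweig" sequence, i.e.\ a sequence $\seqn{\mu_n}$ of measures on $\aA$ (equivalently, on the clopen algebra of $St(\aA)$) that is weak* null but does \emph{not} converge to $0$ in total variation norm. Indeed, if $\seqn{\mu_n}$ is weak* null with $\|\mu_n\|\not\to 0$, then after normalizing one extracts a weak* null sequence of norm $1$; such a sequence is automatically pointwise null (weak* null on clopen sets), so it witnesses the failure of the Nikodym property, and being weak* null without being weakly null (the norm stays bounded away from $0$, and a short argument produces a Borel set on which the $\mu_n$ fail to converge) it witnesses the failure of the Grothendieck property. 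So the whole theorem amounts to: \emph{in $V[G]$, the ground-model infinite algebra $\aA$ carries a normalized weak* null sequence of measures.}

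\textbf{Second}, I would locate the combinatorial content in the ground model. Since $\aA$ is infinite, in $V$ it contains an infinite antichain, hence an infinite sequence of pairwise disjoint nonzero elements $\seqn{a_n}$, and correspondingly a sequence of pairwise disjoint nonempty clopen sets $\seqn{U_n}$ in $St(\aA)$; pick $x_n\in U_n$. The natural candidate measures are signed combinations $\mu_n = \tfrac12(\delta_{x_{k}} - \delta_{x_{l}})$ for suitably chosen pairs, or more robustly convex-type combinations $\mu_n = \sum_{k\in s_n} \alpha^n_k \delta_{x_k}$ with $\sum_k|\alpha^n_k| = 1$. The key point is that such a sequence is weak* null iff for every continuous $f$, $\mu_n(f)\to 0$; since $\{f\rstr\{x_k:k\io\}: f\in C(St(\aA))\}$ is exactly the set of bounded sequences that are "realized" by clopen partitions refining the $U_k$'s, weak* nullity translates into a statement about the interaction between the new real and the ground-model Boolean algebra of subsets of $\omega$ generated by the traces of clopen sets. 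This is where each of the three forcings enters: a Cohen real, an unsplit (reaping) real, and a random real each, in its own way, produces from the ground-model combinatorics a choice of signs/coefficients making $\mu_n(f)\to 0$ for all ground-model-coded $f$, and continuous functions in $V[G]$ are approximated well enough by ground-model data (using ccc-ness and the fact that $C(St(\aA))$ with $\aA\in V$ is controlled by $\aA$) that weak* nullity against all of $C(St(\aA))^{V[G]}$ follows.

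\textbf{Concretely}, for the Cohen real $c\in 2^\omega$ I would set $\mu_n=\tfrac12(\delta_{x_{2n}}-\delta_{x_{2n+1}})$ and use a genericity/fusion argument: given any name $\dot f$ for a continuous function and any $\eps>0$, density of the set of conditions forcing $|\mu_n(\dot f)|<\eps$ for all large $n$ — this reduces to the fact that a Cohen real cannot be "captured" on a ground-model infinite set by finitely much information. For an unsplit real $r$ (a real whose range, or a real reaping all ground-model sets), the point is that the set $\{k: x_k$ "sees" the new real$\}$ is either almost contained in or almost disjoint from every ground-model set, which forces cancellation in $\mu_n(f)$ for ground-model $f$. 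For a random real the argument is measure-theoretic: the set of $\omega$-sequences of signs for which $\sum \pm \tfrac12(\delta_{x_{2n}}-\delta_{x_{2n+1}})$ fails to be weak* null against a fixed countable family of $f$'s is null by a Borel--Cantelli / law-of-large-numbers estimate, and a random real avoids all ground-model null sets. \textbf{The main obstacle} I anticipate is the "all continuous $f$ in $V[G]$'' clause: one must show that it suffices to test weak* nullity against a family of functions coded in $V$, or more precisely that any $f\in C(St(\aA))^{V[G]}$ is, on the discrete set $\{x_k:k\io\}$, uniformly approximated by ground-model data in a way compatible with the chosen $\mu_n$'s; handling this uniformly across the three forcings — Cohen and random are ccc and bounding in the relevant sense, unsplit needs a separate approximation lemma — is the technical heart, and it is presumably isolated as a preparatory lemma before the three cases are dispatched.
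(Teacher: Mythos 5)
There is a genuine gap at the very first step: your reduction of the theorem to producing a normalized weak* null (Josefson--Nissenzweig) sequence is false for the Nikodym property. The Nikodym property asserts that every \emph{pointwise null} sequence is weak* null; to refute it you need a sequence that is pointwise null (or pointwise bounded) but \emph{not} weak* null, equivalently not uniformly bounded. A weak* null sequence of norm $1$ is simultaneously pointwise null and weak* null, so it witnesses nothing --- indeed $\wp(\omega)$ has the Nikodym property even though $C(\beta\omega)^*=\ell_\infty^*$ carries normalized weak* null sequences by the Josefson--Nissenzweig theorem. What the paper actually needs, and isolates in its Lemma \ref{lemma:fs_no_nik_no_gr}, is a pointwise null sequence of finitely supported measures with $\sup_n\|\mu_n\|=\infty$: the unbounded norms kill Nikodym, and only the \emph{normalization} of that sequence (weak* null, norm $1$, supported in an $\ell_1(S)$ copy with the Schur property) kills Grothendieck. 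Your proposal discards the norm blow-up at the outset, so even if every subsequent step were carried out it would not yield the Nikodym half of the theorem. Relatedly, your "short argument produces a Borel set" for the Grothendieck half silently uses that the measures are atomic (the Schur property of $\ell_1(S)$); for general measures, weak* null with norms bounded away from $0$ does not preclude weak nullity.

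Beyond the reduction, the routes diverge. For Cohen and unsplit reals the paper proves the stronger fact that $St(\aA)$ acquires a non-trivial convergent sequence $x_n\to x$ (for Cohen: an irreducible map from a closed subset onto $2^\omega$ turns the Cohen real into a generic filter on a countable $\pi$-base, producing a $\G_\delta$-point of a perfect set; for unsplit: the unsplit real selects a convergent subsequence from any countable subset), after which $\mu_n=n(\delta_{x_n}-\delta_x)$ is pointwise null with unbounded norms. Your sketched Cohen argument with $\tfrac12(\delta_{x_{2n}}-\delta_{x_{2n+1}})$ on an arbitrary antichain is both too vague to assess and aimed only at the (insufficient) JN conclusion. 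For random reals your instinct --- signs from the random real plus a Borel--Cantelli estimate --- does match the paper's construction $\mu_n=\alpha_n\sum_{i\in I_n}(-1)^{r(i)+1}\delta_{x_i}$, but there the weights $\alpha_n=1/(n\sqrt{2^n})$ over blocks of size $2^n$ are chosen precisely so that $\|\mu_n\|\to\infty$ while $|\mu_n(A)|\le 1/\sqrt{n}$ eventually. Finally, the obstacle you flag as the technical heart (testing against all $f\in C(St(\aA))^{V[G]}$) is a non-issue: since $\aA\in V$, pointwise nullity is checked only against ground-model clopen sets, and pointwise null plus uniformly bounded implies weak* null by a Stone--Weierstrass approximation (the paper's Corollary \ref{cor:pnub_wsn}); the real work in the random case is the large-deviation estimate, uniform over all $A\iA$.
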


We establish the above theorem in a series of partial results. First, in Theorems \ref{theorem:cohen} and \ref{theorem:unsplit} we prove that if $\PP$ adds a Cohen real or an unsplit real, then in any $\PP$-generic extension every infinite ground model Boolean algebra $\aA$ obtains a non-trivial convergent sequence in its Stone space $St(\aA)$, and, consequently, it can have neither the Nikodym property nor the Grothendieck property. Theorems \ref{theorem:cohen} and \ref{theorem:unsplit} have already been known to experts in the area (cf. e.g. Dow--Fremlin \cite[page 162]{DF07} and their reference to Koszmider \cite{Kos90}), however, it seems that their proofs have never been published anywhere. Our proof of Theorem \ref{theorem:cohen} may be seen as a forcing counterpart of the proof of Geschke's \cite[Theorem 2.1]{Ges06} which states that under Martin's axiom every infinite compact space of weight $<2^\omega$ contains a non-trivial convergent sequence or, more generally, that in ZFC every infinite compact space of weight strictly less than the covering number $\cov(\mM)$ of the meager ideal $\mM$ contains such a sequence. Geschke's argument is on the other hand a topological counterpart of Koppelberg's \cite[Proposition 5]{Kop77} asserting that under Martin's axiom every infinite Boolean algebra of cardinality $<2^\omega$ has countable cofinality. The argument for Theorem \ref{theorem:unsplit} is based on the idea presented in Booth \cite[Theorem 2]{Boo74} (see also \cite{vD84}) where it is showed that every infinite compact space of weight strictly less than the splitting number $\fraks$ is sequentially compact and thus contains a non-trivial convergent sequence.

The issue of adding random reals is more special. Recall that Dow and Fremlin \cite{DF07} first proved that adding any number of random reals to the ground model does not introduce non-trivial convergent sequences to the Stone spaces of $\sigma$-complete ground model Boolean algebras (or, more generally, to the Stone spaces of ground model Boolean algebras whose Stone spaces \textit{in} the ground model are F-spaces). Since not containing any non-trivial convergent sequences in the Stone space is not sufficient for an infinite Boolean algebra to have the Nikodym property or the Grothendieck property, the result of Dow and Fremlin does not say anything about the preservation of either of the properties by the random forcing. We address here this issue by proving in Theorem \ref{theorem:random} that if a forcing $\PP$ adds a random real, then for any infinite ground model Boolean algebra $\aA$ in every $\PP$-generic extension of the ground model there are sequences of finitely supported measures on the Stone space $St(\aA)$ which witness that $\aA$ has neither the Nikodym property nor the Grothendieck property. 

We also generalize partially the aforementioned result of Dow and Fremlin. Namely, we prove in Theorem \ref{theorem:df_gen} that for any ground model $\sigma$-complete Boolean algebra $\aA$ the random forcing does not add to its Stone space $St(\aA)$ any weak* null sequences of normalized measures whose supports consist of at most $M$ points, where $M\io$ is a fixed number. This result complements Theorem \ref{theorem:random}, at least in the case of $\sigma$-algebras---see Section \ref{sec:df} for more details. 

As examples of forcings adding a Cohen real one can name the Hechler forcing or finite support iterations of infinite length of non-trivial posets (see \cite[Example 0.2]{MG91}). The Mathias forcing is a typical example of a notion adding an unsplit real. Finally, random reals are added by, e.g., the amoeba forcing. 

\begin{corollary}\label{cor:main}
Let $\aA\in V$ be an infinite Boolean algebra. Let $\PP\in V$ be one of the following notions of forcing: Cohen, finite support iteration of infinite length of non-trivial posets, Hechler, Mathias, random, or amoeba. Assume that $G$ is a $\PP$-generic filter over $V$. Then, in $V[G]$, $\aA$ has neither the Nikodym property nor the Grothendieck property.
\end{corollary}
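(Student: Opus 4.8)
The plan is to derive the corollary directly from Theorem~\ref{theorem:main}: it suffices to check, for each of the six forcings on the list, that it adds a Cohen real, an unsplit real, or a random real, and Theorem~\ref{theorem:main} then immediately gives that $\aA$ has in $V[G]$ neither the Nikodym property nor the Grothendieck property. So the whole argument is a short case analysis with no genuinely new content beyond Theorem~\ref{theorem:main}.

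Three of the cases I would dispose of at once. Cohen forcing adds a Cohen real and random forcing adds a random real by the very definitions. The amoeba forcing (for measure) adds a random real over $V$: recall that the amoeba generic produces an open set $A_G\subseteq 2^\omega$ of measure bounded away from $1$ and that, by a routine density argument, $A_G$ contains every Borel null set coded in $V$; hence any real in the complement of $A_G$ is random over $V$. In each of these three cases we land in the first or the third bullet of Theorem~\ref{theorem:main}.

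For the remaining three forcings I would invoke classical facts about adding Cohen and unsplit reals. A finite support iteration of infinite length of non-trivial posets adds a Cohen real by \cite[Example~0.2]{MG91}, a Cohen real already appearing at the first limit stage of countable cofinality. Hechler forcing likewise adds a Cohen real: its generic function $d\in\omega^\omega$ is in fact Cohen over $V$, because for every dense $D\subseteq\omega^{<\omega}$ the set of Hechler conditions whose stem lies in $D$ is dense (extend the stem inside $D$ using values above the side condition), so some initial segment of $d$ meets $D$. Finally, the Mathias real $m\in[\omega]^\omega$ is unsplit over $V$: for every $A\in\mathcal{P}(\omega)\cap V$ the set of Mathias conditions $(s,B)$ with $B\subseteq A$ or $B\subseteq\omega\setminus A$ is dense, whence $m\subseteq^* A$ or $m\subseteq^*(\omega\setminus A)$. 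Thus the Hechler and finite-support-iteration cases fall under the first bullet of Theorem~\ref{theorem:main}, and the Mathias case under the second, which exhausts the list.

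I do not expect any real obstacle here: everything used — that amoeba adds a random real, that Hechler and infinite finite-support iterations add Cohen reals, and that the Mathias generic is unsplit — belongs to the folklore of forcing, and the only care required is to phrase each fact in precisely the form demanded by Theorem~\ref{theorem:main}.
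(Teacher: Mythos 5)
Your proposal follows exactly the paper's route: the corollary is obtained by checking that each listed forcing adds a Cohen, unsplit, or random real (the paper simply cites these as folklore, e.g.\ \cite[Example 0.2]{MG91} for finite support iterations) and then invoking Theorem~\ref{theorem:main}. One small correction: the Hechler generic $d$ itself is \emph{not} a Cohen real in $\omega^\omega$ --- your density argument breaks down because, for instance, the dense set of finite sequences ending in $0$ cannot be met below a condition whose side function is everywhere positive; the standard fix is to observe that $d \bmod 2$ is Cohen in $2^\omega$, since there one can always choose the extending values both above the side condition and with the prescribed parities. With that repair the case analysis is complete and matches the paper.
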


We also study the case of adding dominating reals---following the argument presented in \cite[Proposition 8.8]{DSNik} and based on the celebrated Josefson--Nissenzweig theorem from Banach space theory we prove in Section \ref{sec:dominating} that adding dominating reals kills the Nikodym property of all infinite ground model Boolean algebras.

\begin{restatable}{theorem}{dominating}
\label{theorem:main_dominating}
Let $\aA\in V$ be an infinite Boolean algebra. Let $\PP\in V$ be a notion of forcing adding a dominating real. Assume that $G$ is a $\PP$-generic filter over $V$. Then, in $V[G]$, $\aA$ does not have the Nikodym property.
\end{restatable}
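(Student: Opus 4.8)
The plan is to exploit the Josefson--Nissenzweig theorem, which guarantees that for any infinite-dimensional Banach space $X$ there is a weak* null sequence of norm-one functionals in $X^*$. Applying this to $X = C(St(\aA))$ already in the ground model $V$, we obtain in $V$ a sequence $\seqn{\mu_n}$ of measures on $St(\aA)$ with $\|\mu_n\| = 1$ for all $n$ and $\mu_n(f) \to 0$ for every $f \in C(St(\aA))$; equivalently (identifying measures on $St(\aA)$ with finitely additive measures on $\aA$), $\mu_n(A) \to 0$ for every $A \in \aA$. The idea, following \cite[Proposition 8.8]{DSNik}, is that a dominating real lets us ``rescale along a fast-growing function'' so that the pointwise convergence $\mu_n(A) \to 0$ is preserved while the norm convergence to $0$ is destroyed, which is exactly the failure of the Nikodym property.

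First I would fix, in $V$, the Josefson--Nissenzweig sequence $\seqn{\mu_n}$ as above. Since $\mu_n(A) \to 0$ for each $A \in \aA$ and since there are only countably many relevant ``witness'' sets one needs at each stage, for every $A \in \aA$ one can choose (in $V$) a function $g_A \in \oo$ such that $|\mu_n(A)| \le 1/k$ whenever $n \ge g_A(k)$. Now let $h \in \oo \cap V[G]$ be a dominating real over $V$, so that $h$ eventually dominates every ground-model function, in particular each $g_A$. Using $h$ one extracts in $V[G]$ a subsequence $\seqk{\mu_{n_k}}$ (with $n_k$ defined from $h$, e.g. $n_k = h(k)$ or a suitable strictly increasing reindexing) together with scalars $c_k \to \infty$ — for instance $c_k = k$ — and considers the rescaled sequence $\seqk{c_k \mu_{n_k}}$. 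Because $h$ dominates each $g_A$, for every fixed $A \in \aA$ we get $|c_k \mu_{n_k}(A)| = |k \cdot \mu_{n_k}(A)| \to 0$ as $k \to \infty$, so the rescaled sequence is still pointwise null on $\aA$. On the other hand $\|c_k \mu_{n_k}\| = c_k \to \infty$, so $\seqk{c_k \mu_{n_k}}$ is an unbounded sequence of measures on $\aA$ that is pointwise null; by the equivalent form of the Nikodym property recorded in Lemma \ref{lemma:nik_equiv_forms} (the Nikodym property is equivalent to: every pointwise bounded sequence of measures is uniformly bounded), this shows $\aA$ fails the Nikodym property in $V[G]$.

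The point requiring care is the interplay between the countably many rate functions $g_A$ and a \emph{single} dominating real: naively there is one $g_A$ per element of $\aA$, and $\aA$ need not be countable. The resolution is that one does not need to dominate all $g_A$ simultaneously — it suffices, for the diagonalization, to dominate the countably many functions arising from a countable ``core'' of the construction. Concretely, one fixes in $V$ a single countable family of functions controlling the speed of convergence (one extracts, in $V$, an increasing sequence of integers along which the partial suprema $\sup\{|\mu_n(A)| : A \in \fF_m\}$ over finite subfamilies behave well, or simply works with the scalar functions $n \mapsto \|\mu_n \restriction \aA_m\|$ for a fixed countable increasing chain of finite subalgebras whose union is dense), and lets $h$ dominate just those. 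The genuinely load-bearing input is that the passage from pointwise-null-and-norm-one to pointwise-null-and-norm-unbounded can be arranged with the convergence rates encoded by ground-model functions; once $h \in V[G]$ dominates those, the verification that $\seqk{c_k \mu_{n_k}}$ is pointwise null on all of $\aA$ is routine, using that every $A \in \aA$ lies in the ground model and the rate $g_A$ is computed in $V$. I expect this diagonalization bookkeeping — matching the dominating real to the right countable set of rate functions — to be the main obstacle; the functional-analytic ingredients (Josefson--Nissenzweig, and the equivalence in Lemma \ref{lemma:nik_equiv_forms}) are off-the-shelf.
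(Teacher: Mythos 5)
Your proposal is correct and follows essentially the same strategy as the paper: start from a ground-model Josefson--Nissenzweig sequence of norm-one, pointwise-null measures, use the dominating real to rescale it into a pointwise bounded but norm-unbounded sequence, and conclude via Lemma \ref{lemma:nik_equiv_forms}. The only implementation difference is the diagonalization device: you pass to a subsequence $\mu_{h(k)}$ indexed by the dominating real $h$ and multiply by $k$, whereas the paper first converts the dominating real into an \emph{anti-dominating} real $g$ (one with $g\le^* e$ for every ground-model increasing unbounded $e\in\ooi$; Proposition \ref{prop:domin_anti_domin}), applies it to the reciprocals $e_A=1/d_A$ of ground-model rate functions, and rescales the whole sequence by $g(n)$; the two devices are interchangeable here. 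One correction: the ``main obstacle'' you flag in your last paragraph is not an obstacle. A dominating real dominates \emph{every} ground-model function by definition, not just countably many, so it dominates each $g_A$ (and each $k\mapsto g_A(k^2)$, which is what you actually need if you want $k\,\mu_{h(k)}(A)\to 0$ rather than merely $|k\,\mu_{h(k)}(A)|\le 1$; in any case pointwise boundedness already suffices for condition (2) of Lemma \ref{lemma:nik_equiv_forms}). No reduction to a countable core is needed, and your proposed workaround via a countable chain of finite subalgebras ``dense'' in $\aA$ would not make sense for uncountable $\aA$; that paragraph should simply be deleted.
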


The class of forcings adding a dominating real contains such notions as Hechler, Laver, or Mathias. Thus, in addition to Corollary \ref{cor:main}, we get the following result.

\begin{corollary}\label{cor:main2}
Let $\aA\in V$ be an infinite Boolean algebra. Let $\PP\in V$ be the Laver forcing. Assume that $G$ is a $\PP$-generic filter over $V$. Then, in $V[G]$, $\aA$ does not have the Nikodym property.
\end{corollary}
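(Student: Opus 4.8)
The plan is to rely on the equivalent form of the Nikodym property from Lemma~\ref{lemma:nik_equiv_forms}: it suffices to produce, in $V[G]$, a sequence $\seqn{\mu_n}$ of measures on $\aA$ which is pointwise bounded, i.e. $\sup_{n\io}|\mu_n(A)|<\infty$ for every $A\iA$, but not uniformly bounded, i.e. $\sup_{n\io}\|\mu_n\|=\infty$. Such a sequence will be obtained by rescaling a fixed weak* null sequence coming from the ground model and ``speeding it up'' along a dominating real. Concretely, in $V$ the Banach space $C(St(\aA))$ is infinite-dimensional since $\aA$ is infinite, so the Josefson--Nissenzweig theorem yields a weak* null sequence $\seqn{\nu_n}$ of (signed Radon) measures on $St(\aA)$ with $\|\nu_n\|=1$ for all $n\io$; restricting to clopen sets, each $\nu_n$ is a measure on $\aA$ and $\nu_n(A)\to 0$ for every $A\iA$. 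Since the total variation norm of a finitely additive measure on $\aA$ is a supremum over finite partitions of $\aA$ into its own elements, it is absolute between $V$ and $V[G]$, so the $\nu_n$ remain measures on $\aA$ of norm $1$ in $V[G]$.

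Working in $V$, for each $A\iA$ let $g_A\in\oo$ be defined by setting $g_A(n)$ to be the least $m\io$ with $|\nu_k(A)|\le 1/(n+1)^2$ for all $k\ge m$; this is well defined precisely because $\nu_k(A)\to 0$. Now pass to $V[G]$ and fix $d\in\oo\cap V[G]$ dominating $\oo\cap V$, which exists by the hypothesis on $\PP$. Put $\mu_n=n\cdot\nu_{d(n)}$ for $n\io$; this is a sequence of measures on $\aA$ lying in $V[G]$. Since $\|\mu_n\|=n\cdot\|\nu_{d(n)}\|=n$, the sequence $\seqn{\mu_n}$ is not uniformly bounded. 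To see it is pointwise bounded — in fact pointwise null — fix $A\iA$. As $d$ eventually dominates $g_A$, there is $n_A\io$ with $d(n)\ge g_A(n)$ for all $n\ge n_A$, and then the definition of $g_A$ gives $|\nu_{d(n)}(A)|\le 1/(n+1)^2$, so that $|\mu_n(A)|=n\cdot|\nu_{d(n)}(A)|\le n/(n+1)^2\le 1/n$ for all $n\ge\max(n_A,1)$. Hence $\mu_n(A)\to 0$, and in particular $\sup_{n\io}|\mu_n(A)|<\infty$. By Lemma~\ref{lemma:nik_equiv_forms}, $\aA$ does not have the Nikodym property in $V[G]$. (Equivalently, $\seqn{\mu_n}$ is pointwise null but, being norm-unbounded, cannot be weak* null by the Uniform Boundedness Principle, which directly contradicts the Nikodym property.)

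This is essentially the argument of \cite[Proposition~8.8]{DSNik}; the one genuine idea is the use of the dominating real to diagonalize simultaneously against all of the ground-model ``convergence rates'' $g_A$, one for each $A\iA$. I do not expect a serious obstacle here. The points that require a little care are the absoluteness of the total variation norm, so that the rescaled ground-model measures are honest norm-$n$ measures in $V[G]$ and the witness to non-uniform-boundedness survives the extension, and the observation that $\oo\cap V$ — which $d$ must eventually dominate — indeed contains every $g_A$, because both $\aA$ and $\seqn{\nu_n}$ belong to $V$.
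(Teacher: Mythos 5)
Your proof is correct, and its core is the same as the paper's: Corollary \ref{cor:main2} is deduced from the fact that Laver forcing adds a dominating real together with Theorem \ref{theorem:main_dominating}, whose proof --- like yours, and like \cite[Proposition 8.8]{DSNik} --- starts from a ground-model Josefson--Nissenzweig sequence of norm-one weak* null measures on $\aA$ and uses the new real to diagonalize against the ground-model convergence rates, concluding via Lemma \ref{lemma:nik_equiv_forms}. Where you genuinely diverge is in the implementation of the diagonalization. The paper keeps the index $n$ fixed and rescales, setting $\nu_n=\mu_n/c(n)$ where $1/c$ is an \emph{anti-dominating} real (one that is $\le^*$ every ground-model rate function $e_A$); producing such a real from a dominating one costs the paper the operator $\Phi$ and Propositions \ref{prop:phi_props} and \ref{prop:domin_anti_domin}. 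You instead re-index along the dominating real itself, taking $\mu_n=n\cdot\nu_{d(n)}$: the norms blow up by fiat, and pointwise smallness follows because $d(n)$ eventually exceeds the rate $g_A(n)$. For this corollary your route is a genuine simplification, bypassing the anti-dominating machinery entirely (that machinery is a reusable equivalence, but it is not needed here). The two points you flag as needing care --- that each $g_A$ lies in $\oo\cap V$ because $\aA$ and the JN sequence do, and that the total variation norm of a ground-model measure on $\aA$ is absolute, so $\big\|\mu_n\big\|=n$ is witnessed in $V[G]$ --- are exactly the right ones, and both check out.
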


The case of the Laver forcing is particularly interesting as Dow \cite[Theorem 11]{Dow21} showed that adding a single Laver real does not introduce any non-trivial converging sequences in the Stone space of the ground model Boolean algebra $\wo\cap V$, yet, by Corollary \ref{cor:main2}, $\wo\cap V$ loses its Nikodym property. We do not know whether adding a Laver real (or, more generally, a dominating real) kills the Grothendieck property of ground model $\wo$ (or any other ground model Boolean algebra)---see Section \ref{sec:open}.

\section{Notations}

Our notations are standard---we follow the texts of Diestel \cite{Die84}, Kunen \cite{Kun80}, and Engelking \cite{Eng89}. We mention below only the most important issues.

$V$ always denotes the set-theoretic universe.

By $\omega$ we denote the first infinite countable ordinal number. If $A$ is a set, then by $\wp(A)$, $\ctblsub{A}$, and $\finsub{A}$ we denote the families of all subsets of $A$, all infinite countable subsets of $A$, and all finite subsets of $A$, respectively. $A^B$ denotes the family of all functions from a set $B$ to $A$. If $f$ is a function, then by $\ran(f)$ we denote its range. If $(L,\le)$ is a linear order and $f,g\in L^\omega$, then by writing $f\le g$ ($f\le^*g$) we mean that for all (but finitely many) $n\io$ we have $f(n)\le g(n)$. We similarly define the strict relations $<$ and $<^*$ on $L^\omega$. $\id_A$ denotes the identity function on $A$. If $B\sub A$, then by $\chi_B$ we denote the characteristic function of $B$ on $A$.

All topological spaces considered in this paper are assumed to be Tychonoff, that is, completely regular and Hausdorff. A subset of a topological space is \textit{perfect} if it is closed and contains no isolated points. A sequence $\seqn{x_n}$ in a topological space $X$ is \textit{non-trivial} if $x_n\neq x_m$ for every $n\neq m\io$ and, if the limit exists, $x_m\neq\lim_{n\to\infty}x_n$ for every $m\io$.

If $\aA$ is a Boolean algebra, then $St(\aA)$ denotes its Stone space (i.e. the space of all ultrafilters on $\aA$) with the usual topology which makes it a totally disconnected compact Hausdorff space. Recall that $\aA$ is isomorphic to the algebra of clopen subsets of $St(\aA)$. For every element $A\in\aA$ by $[A]_\aA$ we denote the clopen subset of $St(\aA)$ corresponding to $A$.

If we say that \textit{$\mu$ is a measure on a Boolean algebra $\aA$}, then we mean that $\mu$ is a signed  finitely additive function from $\aA$ to $\R$ with bounded total variation, that is, the following holds:
\[\|\mu\|=\sup\big\{|\mu(A)|+|\mu(B)|\colon\ A,B\iA, A\wedge B=0\big\}<\infty.\]
When we say that $\mu$ \textit{is a measure on a compact Hausdorff space $K$}, then we mean that $\mu$ is a signed $\sigma$-additive Radon measure defined on the Borel $\sigma$-algebra $Bor(K)$ of $K$---it follows automatically that $\mu$ has bounded total variation, that is:
\[\|\mu\|=\sup\big\{|\mu(A)|+|\mu(B)|\colon\ A,B\in Bor(K), A\cap B=0\big\}<\infty.\]
Recall that if we identify a given Boolean algebra $\aA$ with the subalgebra of clopen subsets of the Borel $\sigma$-field $Bor(St(\aA))$, then every measure $\mu$ on $\aA$ extends uniquely to a measure $\wh{\mu}$ on $St(\aA)$---we will usually omit $\hat{}$ and write simply $\mu$, too.

Let $K$ be a compact space. For a measure $\mu$ on $K$ and a $\mu$-measurable function $f\colon K\to\R$ we write $\mu(f)$ to denote $\int_Kfd\mu$. By $C(K)$ we denote the Banach space of all continuous real-valued functions on $K$ endowed with the supremum norm. Recall that by the Riesz representation theorem the dual space $C(K)^*$ is isometrically isomorphic to the Banach space $M(K)$ of all Radon measures on $K$ endowed with the total variation norm---$M(K)$ acts on $C(K)$ by the formula $\langle f,\mu\rangle=\mu(f)$.

Let $\seqn{\mu_n}$ be a sequence of measures on a Boolean algebra $\aA$. If $\lim_{n\to\infty}\mu_n(A)=0$ for every $A\iA$, then we say that $\seqn{\mu_n}$ is \textit{pointwise null}; if $\lim_{n\to\infty}\mu_n(f)=0$ for every $f\in C(St(\aA))$, then it is \textit{weak* null}; and if $\lim_{n\to\infty}\mu_n(B)=0$ for every $B\in Bor(St(\aA))$, then it is \textit{weakly null} (cf. \cite[Theorem 11, page 90]{Die84}). Additionally, we say that $\seqn{\mu_n}$ is \textit{pointwise bounded} if $\sup_{n\io}\big|\mu_n(A)\big|<\infty$ for every $A\iA$, and that it is \textit{uniformly bounded} if $\sup_{n\io}\big\|\mu_n\big\|<\infty$.

\section{Adding a convergent sequence}

In this section we prove that adding a Cohen real (Theorem \ref{theorem:cohen}) or an unsplit real (Theorem \ref{theorem:unsplit}) to the ground model produces a non-trivial convergent sequence in the Stone space of every infinite ground model Boolean algebra. Notice that using the methods described in \cite[page 162]{DF07} one can generalize those results to any infinite ground model compact space $K$.

As we mentioned in Introduction, both of the theorems have been already known to some experts, but it seems that their proofs have never been published anywhere.

\subsection{Cohen reals}

Let us first recall the definition of a Cohen real. Let $\PP\in V$ be a notion of forcing and $G$ a $\PP$-generic filter over $V$. Then, $x\in\Cantor\cap V[G]$ is \textit{a Cohen real over $V$} if for every  dense subset $D\sub Fn(\omega,2)$ such that $D\in V$ we have $D\cap\big\{p\in Fn(\omega,2)\colon\ p\sub x\big\}\neq\emptyset$. Here $Fn(\omega,2)$ is the family of all finite partial functions from $\omega$ to $2$, ordered by the reverse inclusion.

We will need the following folklore lemma.

\begin{lemma}\label{lemma:scattered_conv_seq}
If $K$ is an infinite scattered compact Hausdorff space, then $K$ contains a non-trivial convergent sequence.
\end{lemma}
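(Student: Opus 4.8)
The plan is to exploit the Cantor--Bendixson analysis of a scattered compact space. First I would recall that a scattered compact Hausdorff space $K$ has a well-defined Cantor--Bendixson rank: setting $K^{(0)}=K$, $K^{(\alpha+1)}=$ the set of non-isolated points of $K^{(\alpha)}$, and taking intersections at limits, scatteredness means $K^{(\alpha)}=\emptyset$ for some $\alpha$, and compactness forces the least such $\alpha$ to be a successor, say $\alpha=\beta+1$, with $K^{(\beta)}$ a non-empty finite set of isolated points of $K^{(\beta)}$. Pick one such point $x\in K^{(\beta)}$; by definition $x$ is non-isolated in each $K^{(\gamma)}$ for $\gamma\le\beta$, in particular $x$ is non-isolated in $K=K^{(0)}$.

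Next I would build the convergent sequence by hand using a shrinking neighborhood basis. Since $K$ is compact Hausdorff, $x$ has a neighborhood basis; I want to produce distinct isolated points $x_n$ of $K$ converging to $x$. The key point is that in any neighborhood $U$ of $x$ there is a point of $K\sm\{x\}$ (as $x$ is non-isolated), and by pushing down the Cantor--Bendixson hierarchy one can even find an isolated point of $K$ inside $U$: the set $U$ is itself an infinite scattered compact-ish set (or at least $\ol U$ is scattered compact), so it has isolated points, and a short induction on rank shows any non-empty open subset of a scattered space contains a point isolated in the whole space. Using regularity of $K$, I would recursively choose open sets $U_0\supseteq U_1\supseteq\cdots$ with $x\in U_n$, $\bigcap_n U_n=\{x\}$ is not needed; rather I pick isolated points $x_n\in U_n\sm\{x\}$ and then shrink $U_{n+1}$ to avoid $x_0,\dots,x_n$ (possible since these are finitely many points $\ne x$ and $K$ is Hausdorff) and to lie inside a prescribed basic neighborhood, so that every neighborhood of $x$ eventually contains all but finitely many $x_n$. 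This yields a non-trivial sequence $\seqn{x_n}$ with $x_n\to x$.

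The main obstacle — really the only subtle point — is guaranteeing that every neighborhood of $x$ contains an isolated point of $K$, and organizing the recursion so the resulting sequence genuinely converges to $x$ rather than merely clustering at it. The first is handled by the lemma "a non-empty open subset $W$ of a scattered space $K$ contains a point isolated in $K$": if not, $W$ would be a non-empty subset of $K^{(1)}$, and iterating, $W\sub K^{(\gamma)}$ for all $\gamma$, contradicting scatteredness; one must be mildly careful that $W$ open in $K$ still meets $K^{(\alpha)}$ in its own non-isolated points, but $W$ open implies $W\cap K^{(\alpha)}$ is open in $K^{(\alpha)}$, so its isolated points (if any) are isolated in $K^{(\alpha)}$, giving the descent. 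The convergence is then arranged by fixing in advance, at step $n$, a basic open neighborhood shrinking to the filter base at $x$: since $K$ need not be first countable this requires a small argument — but it suffices to demand that for each basic clopen (or open) $V\ni x$ chosen cofinally, the tail of the sequence lies in $V$; concretely, enumerate a neighborhood filter base is impossible in general, so instead one argues directly: given any open $V\ni x$, the set $\{n: x_n\notin V\}$ is finite because at stage $n$ we forced $U_n\sub$ (intersection of finitely many previously relevant neighborhoods); the cleanest route is to note $x\in\ol{\{x_n:n\in\omega\}}$ and then, having a countable set $D=\{x_n\}$ with $x$ in its closure and each $x_n$ isolated, extract a genuinely convergent subsequence using that $\{x_n\}\cup\{x\}$ with the subspace topology is a countable compact space (closed in $K$ after possibly passing to $\ol D$), hence metrizable, hence sequentially compact, so some subsequence converges, and it must converge to $x$ after thinning. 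I would present the recursion version as the primary argument and mention the metrizability shortcut as an alternative.
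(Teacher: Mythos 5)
Your proposal has a genuine gap, and it sits exactly where you sensed trouble: nothing in the construction forces convergence, and your choice of the limit point can even make convergence impossible. Take $K=[0,\omega_1]$ with the order topology: it is infinite, compact, Hausdorff and scattered, and the last non-empty Cantor--Bendixson level is $\{\omega_1\}$, so your $x$ is $\omega_1$; but no non-trivial sequence converges to $\omega_1$, since any countable set of smaller ordinals has countable supremum. Your recursion is consistent with this failure: the points $x_n$ lie in a decreasing chain of neighborhoods $(\alpha_n,\omega_1]$, yet $\ol{\{x_n\colon n\io\}}\sub[0,\sup_nx_n]$, so $x$ is not even in the closure of the sequence --- without a countable neighborhood base at $x$ the claim that ``the tail lies in $V$'' has no justification. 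The metrizability fallback fails on two counts as well: first, in the same example $x$ does not lie in the closure of any countable set of isolated points; second, even when it does, $\ol{D}$ need not be countable for countable $D$ in a scattered compact space (take the one-point compactification of a Mr\'owka--Isbell $\Psi$-space built over a maximal almost disjoint family of size $\frakc$: it is scattered of rank $3$ and the closure of the countable set $\omega$ of isolated points is the whole uncountable space), so ``countable compact, hence metrizable'' is not available.

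The missing idea --- and the key step of the paper's proof --- is to choose the limit point not at the top of the Cantor--Bendixson hierarchy but as an isolated point \emph{of the set of accumulation points of the isolated points}. Let $A$ be an infinite countable set of isolated points of $K$ (these exist, as you correctly argue); $A$ is open and discrete, so by compactness $\partial A=\ol{A}\sm A$ is a non-empty compact scattered set, and one picks $x$ isolated \emph{in} $\partial A$. Separating $x$ from $(\partial A)\sm\{x\}$ by disjoint open sets $V,W$ makes $x$ the unique accumulation point of $V\cap A$, so $\ol{V\cap A}=(V\cap A)\cup\{x\}$ is the one-point compactification of an infinite countable discrete set, and \emph{any} enumeration of $V\cap A$ converges to $x$: for every open $U\ni x$ the set $(V\cap A)\sm U$ is compact and discrete, hence finite. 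No neighborhood base at $x$ and no recursion are needed; once the limit point has been isolated inside the boundary, compactness does all the work.
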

\begin{proof}
Since $K$ is scattered and infinite, there is a countable subset $A$ of $K$ such that every $x\in A$ is isolated in $K$. $A$ must be discrete and open in $K$. Since $K$ is compact, the boundary $\partial A$ is non-empty and thus must contain an isolated point $x$ (in $\partial A$). The sets $\{x\}$ and $(\partial A)\sm\{x\}$ are closed subsets of $K$, so there are disjoint open sets $V$ and $W$ such that $\{x\}\sub V$ and $(\partial A)\sm\{x\}\sub W$. Note that $\ol{V\cap A}=(V\cap A)\cup\{x\}$, so $\ol{V\cap A}$ is a one-point compactification of $V\cap A$. Enumerate $V\cap A=\{x_n:\ n\io\}$; then $x_n\to x$.
\end{proof}

Now, we are in the position to prove the main theorem of this section.

\begin{theorem}\label{theorem:cohen}
Let $\PP\in V$ be a notion of forcing adding a Cohen real and $\aA\in V$ an infinite Boolean algebra. Then, for every $\PP$-generic filter $G$ over $V$ the Stone space $\big(St(\aA)\big)^{V[G]}$ contains a non-trivial convergent sequence.
\end{theorem}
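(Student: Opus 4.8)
The plan is to distinguish two cases according to whether $St(\aA)$ is scattered. If it is, then, being infinite, by Lemma~\ref{lemma:scattered_conv_seq} it already contains a non-trivial convergent sequence; the terms and the limit of such a sequence are ultrafilters on $\aA$, hence are still ultrafilters in $V[G]$, and they are still distinct and still converge to the same point there, since all these assertions are absolute statements about elements of $\aA$. So assume from now on that $St(\aA)$ is not scattered. A routine Cantor--scheme construction then produces a countable subalgebra $\bB\sub\aA$ together with a homeomorphism of $St(\bB)$ onto $\Cantor$, that is, a continuous surjection $\pi_0\colon St(\aA)\to\Cantor$, and a tree $\seq{a_s}{s\in 2^{<\omega}}$ of nonzero elements of $\bB$ with $a_{s0}\vee a_{s1}=a_s$, $a_{s0}\wedge a_{s1}=0$, and $[a_s]_\aA=\pi_0^{-1}\big([s]\big)$ for all $s$. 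By Zorn's lemma fix a closed $Z\sub St(\aA)$ minimal with respect to $\pi_0(Z)=\Cantor$; then $\pi:=\pi_0\restriction Z\colon Z\to\Cantor$ is an \emph{irreducible} continuous surjection. Let $\aA'$ be the algebra of clopen subsets of $Z$, i.e.\ $\aA/I_Z$ with $I_Z=\{A\iA\colon\ [A]_\aA\cap Z=\emptyset\}$; since $\pi$ is onto, $\bB$ embeds into $\aA'$, and the images $a'_s$ of the $a_s$ satisfy $[a'_s]_{\aA'}=\pi^{-1}\big([s]\big)$.

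The heart of the argument --- and the only genuinely delicate point --- is the following: \emph{for every $A\in\aA'$ the set $F_A:=\pi\big([A]_{\aA'}\big)\cap\pi\big([-A]_{\aA'}\big)$ is nowhere dense in $\Cantor$}. Suppose not; as $F_A$ is closed, $F_A\supseteq[s]$ for some $s$, so both $[A]_{\aA'}$ and $[-A]_{\aA'}$ map onto $[s]$ under $\pi$. But $\pi$ restricted to the clopen set $\pi^{-1}\big([s]\big)$ is again irreducible onto $[s]$ (if $F\sub\pi^{-1}([s])$ is closed with $\pi(F)=[s]$, then $F\cup\big(Z\sm\pi^{-1}([s])\big)$ is closed with image $\Cantor$, hence equals $Z$ by irreducibility of $\pi$, so $F=\pi^{-1}([s])$), while $[A]_{\aA'}\cap\pi^{-1}\big([s]\big)$ is a \emph{proper} closed subset of $\pi^{-1}\big([s]\big)$ that still maps onto $[s]$ --- a contradiction. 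Now let $x\in\Cantor\cap V[G]$ be a Cohen real over $V$. Each $F_A$ is a nowhere dense set coded in $V$, so, since a Cohen real avoids every nowhere dense set coded in the ground model, $x\notin F_A$. Unwinding the definition of $F_A$, this says exactly that for every $A\in\aA'$ there is $n\io$ with $a'_{x\restriction n}\le A$ or $a'_{x\restriction n}\le -A$. Hence the ideal $J:=\{A\in\aA'\colon\ \exists n\io\ a'_{x\restriction n}\wedge A=0\}$ is proper (no $a'_s$ equals $0$) and contains one of $A,-A$ for each $A\in\aA'$, so $\aA'/J$ is the two-element algebra; consequently $\bigcap_{n\io}[a'_{x\restriction n}]_{\aA'}$, computed in $\big(St(\aA')\big)^{V[G]}$, consists of the single ultrafilter $\tilde c=\{A\in\aA'\colon\ \exists n\io\ a'_{x\restriction n}\le A\}$.

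Thus $\{\tilde c\}$ is a $G_\delta$ point of the compact Hausdorff space $\big(St(\aA')\big)^{V[G]}$, and it is not isolated: an isolated point of $\big(St(\aA')\big)^{V[G]}$ would be the principal ultrafilter determined by an atom of $\aA'$ and would therefore lie in $V$, whereas $\tilde c\notin V$ because $\pi(\tilde c)=x\notin V$. A non-isolated $G_\delta$ point $p$ of a compact Hausdorff space is always the limit of a non-trivial convergent sequence: writing $\{p\}=\bigcap_{n\io}U_n$ with the $U_n$ clopen and decreasing, each $U_n\sm\{p\}$ is infinite, so one may choose distinct $x_n\in U_n\sm\big(\{p\}\cup\{x_0,\dots,x_{n-1}\}\big)$, and a straightforward compactness argument shows $x_n\to p$. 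Applying this to $\tilde c$ yields such a sequence in $\big(St(\aA')\big)^{V[G]}$; and since $\big(St(\aA')\big)^{V[G]}$ is, via Stone duality, homeomorphic to the closed subspace $\{u\in St(\aA)\colon\ u\cap I_Z=\emptyset\}$ of $\big(St(\aA)\big)^{V[G]}$, this is a non-trivial convergent sequence in $\big(St(\aA)\big)^{V[G]}$, as required. Everything except the nowhere-density of the $F_A$'s is bookkeeping with Stone duality and the basic genericity of a Cohen real; that lemma is exactly the step where replacing $St(\aA)$ by an irreducible preimage $Z$ of $\Cantor$ is essential.
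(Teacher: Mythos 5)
Your proposal is correct and follows essentially the same route as the paper's proof: the same case split on scatteredness, and, in the non-scattered case, an irreducible continuous surjection onto $2^\omega$ whose irreducibility is exploited so that the Cohen real determines a non-isolated $G_\delta$-point in the Stone space of the relevant quotient algebra, which then yields the convergent sequence. The only difference is one of packaging: the paper expresses the genericity by saying the Cohen real induces a filter on the countable $\pi$-base $\big\{f^{-1}[U]\big\}$ meeting the ground-model dense sets $D_U$, whereas you say the Cohen real avoids the ground-model nowhere dense sets $F_A$ --- these are dual formulations of the same step.
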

\begin{proof}
We have two cases:

1) In $V$, the Stone space $St(\aA)$ of $\aA$ is scattered---by Lemma \ref{lemma:scattered_conv_seq} there is a non-trivial convergent sequence in $St(\aA)$. Of course, this sequence will also be convergent in the Stone space of $\aA$ in any $\PP$-generic extension $V[G]$.

2) In $V$, the Stone space $St(\aA)$ is not scattered. Hence, there is a closed subset $L$ of $St(\aA)$ and a continuous surjection $f\colon L\to\Cantor$. By the Kuratowski--Zorn lemma, we may assume that $f$ is irreducible and hence that $L$ is perfect. The family
\[\pP=\big\{f^{-1}[U]\colon\ U\neq\emptyset\text{ is a clopen in }\Cantor\big\}\]
is a countable $\pi$-base of $L$ (partially ordered by the reverse inclusion $\supseteq$). Indeed, given any non-empty open set $W\sub L$, note that $f[L\sm W]\neq\Cantor$ by the irreducibility of $f$, so for any clopen $U\sub\Cantor\sm f[L\sm W]$ we have $f^{-1}[U]\sub W$.

Let $\bB$ be the Boolean algebra of clopen subsets of $L$. Of course, $\pP\sub\bB$. By the Stone duality, $\bB$ is a homomorphic image of $\aA$. For every $U\in\bB$ put:
\[D_U=\big\{P\in\pP\colon\ P\sub U\text{ or }P\sub L\sm U\big\}.\]
Trivially, each $D_U\in V$ and is dense in the poset $(\pP,\supseteq)$.

Fix now a $\PP$-generic filter $G$ over $V$ and let us work in $V[G]$. By the assumption, there is a Cohen real $c\in\Cantor$ over $V$. The family
\[\gG=\big\{f^{-1}\big[[c\rstr n]^V\big]\colon n\io\big\}\]
is a $\pP$-generic filter over $V$, so, in particular, $\gG$ meets every $D_U$ (as $D_U\in V$). Let $x\in St(\bB)$ be the ultrafilter with the base $\gG$. 
Since the ground model (perfect) set $L$ had no isolated points (in $V$) and it is dense in $St(\bB)$, $x$ is not isolated in $St(\bB)$. Thus, we proved that $St(\bB)$ is a perfect set containing a $\G_\delta$-point. In particular, $St(\bB)$ contains a non-trivial convergent sequence.

In $V[G]$, $\bB$ is still a homomorphic image of $\aA$, hence $St(\bB)$ is homeomorphic to a closed subset of $St(\aA)$. By the previous paragraph, $St(\aA)$ contains a non-trivial convergent sequence (in $V[G]$).
%
%
\end{proof}

The next corollary follows from the proof of Theorem \ref{theorem:cohen}. Recall that a point $x$ in a topological space $X$ is \textit{a $\G_\delta$-point} if the singleton $\{x\}$ is the intersection of a countable family of open subsets of $X$.

\begin{corollary}
Let $\PP\in V$ be a notion of forcing adding a Cohen real and $\aA\in V$ an infinite Boolean algebra such that its Stone space $St(\aA)$ is not scattered. Then, for every $\PP$-generic filter $G$ over $V$ the Stone space $\big(St(\aA)\big)^{V[G]}$ contains a perfect subset $L$ and a point $x\in L$ which is a $\G_\delta$-point in $L$.\noproof
\end{corollary}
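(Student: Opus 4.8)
The plan is to simply re-read case~2) of the proof of Theorem~\ref{theorem:cohen}: the perfect set and the $\G_\delta$-point demanded by the corollary are already produced there, and it only remains to record two absoluteness observations and to push the conclusion down into $St(\aA)^{V[G]}$ along a closed embedding. Recall the set-up of that proof. As $St(\aA)$ is not scattered, in $V$ we fixed a perfect closed subset $L\sub St(\aA)$, an irreducible continuous surjection $f\colon L\to\Cantor$, the countable $\pi$-base $\pP=\{f^{-1}[U]:\ \emptyset\neq U\text{ clopen in }\Cantor\}$ of $L$, and the algebra $\bB$ of clopen subsets of $L$, which is a homomorphic image of $\aA$ with $\pP\sub\bB$; then, using the Cohen real $c\in\Cantor$ over $V$, we formed the $\pP$-generic filter $\gG=\{f^{-1}[[c\rstr n]^V]:\ n\io\}\sub\bB$ over $V$ and the ultrafilter $x\in St(\bB)^{V[G]}$ with base $\gG$.

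First I would check that $St(\bB)^{V[G]}$ is perfect, not merely non-isolated at $x$. Since $L$ has no isolated points in $V$, the fixed Boolean algebra $\bB$ is atomless in $V$; atomlessness is absolute, because the statement ``$\bB$ has no atoms'' quantifies only over elements of $\bB$ and $\bB$ is literally the same object in $V$ and in $V[G]$. Hence $\bB$ is atomless in $V[G]$ as well, so $St(\bB)^{V[G]}$, being the Stone space of an atomless algebra, is a compact Hausdorff space with no isolated points, i.e.\ it is perfect.

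Next I would verify that $x$ is a $\G_\delta$-point of $St(\bB)^{V[G]}$. Unwinding the phrase ``ultrafilter with base $\gG$'', we have $x=\{b\in\bB:\ \exists P\in\gG\ P\le b\}$, and genericity of $\gG$ with respect to the dense sets $D_U$ ($U\in\bB$) from the proof of Theorem~\ref{theorem:cohen} is exactly what makes this set an ultrafilter. I claim that $\{x\}=\bigcap_{P\in\gG}[P]_\bB$. The inclusion $\sub$ is clear. Conversely, if an ultrafilter $y\in St(\bB)^{V[G]}$ contains every $P\in\gG$, then for each $b\in\bB$: if $b\in x$ there is $P\in\gG$ with $P\le b$, whence $b\in y$; and if $b\notin x$ then $\bB\sm b\in x\sub y$, whence $b\notin y$. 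So $y=x$. As $\gG$ is countable, $\{x\}$ is the intersection of a countable family of clopen subsets of $St(\bB)^{V[G]}$, so $x$ is a $\G_\delta$-point.

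Finally I would transport everything into $St(\aA)^{V[G]}$. In $V[G]$ the algebra $\bB$ is still a homomorphic image of $\aA$, the quotient map being an element of $V$, so $St(\bB)^{V[G]}$ is homeomorphic to a closed subspace of $St(\aA)^{V[G]}$, which we take to be the set $L$ of the statement (the clash with the ground-model $L$ above is harmless) and the image of the point constructed above to be $x$. Being homeomorphic to the perfect space $St(\bB)^{V[G]}$, the closed set $L$ has no isolated points, hence it is perfect; and since being a $\G_\delta$-point is invariant under homeomorphism of the ambient subspace, $x$ is a $\G_\delta$-point of $L$, as required. I expect no real obstacle here: the argument is routine bookkeeping on top of Theorem~\ref{theorem:cohen}, and the only two points deserving explicit mention are the absoluteness of atomlessness (so that $St(\bB)$ is perfect \emph{everywhere} in $V[G]$, not just at $x$) and the identity $\{x\}=\bigcap_{P\in\gG}[P]_\bB$, which is what upgrades ``$x$ has a countable base of ground-model clopen neighbourhoods'' to genuine $\G_\delta$-ness in the extended Stone space.
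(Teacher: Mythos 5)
Your proposal is correct and is essentially the paper's own argument: the corollary is stated with no separate proof precisely because case 2) of the proof of Theorem \ref{theorem:cohen} already produces the perfect set $St(\bB)$ and the $\G_\delta$-point $x$ (via the countable generic filter $\gG$) and embeds $St(\bB)$ as a closed subspace of $St(\aA)^{V[G]}$. Your two explicit verifications --- absoluteness of atomlessness of $\bB$ and the identity $\{x\}=\bigcap_{P\in\gG}[P]_\bB$ --- are exactly the details the paper leaves implicit, and both are carried out correctly.
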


\subsection{Unsplit reals}

Let $\PP\in V$ be a notion of forcing and $G$ a $\PP$-generic filter over $V$. We say that a real $U\in \wo\cap V[G]$ is \textit{unsplit} if for every $A\in\wo\cap V$ the set $U\cap A$ is finite or the set $U\sm A$ is finite. 

The proof of the following theorem follows the idea of Booth \cite[Theorem 2]{Boo74} (see also \cite{vD84}).

\begin{theorem}\label{theorem:unsplit}
Let $\PP\in V$ be a notion of forcing adding an unsplit real and $\aA\in V$ an infinite Boolean algebra. Then, for every $\PP$-generic filter $G$ over $V$ the Stone space $\big(St(\aA)\big)^{V[G]}$ contains a non-trivial convergent sequence.
\end{theorem}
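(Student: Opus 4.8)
The plan is to mimic the proof of Theorem~\ref{theorem:cohen}: either $St(\aA)$ is already scattered in $V$ (and we are done by Lemma~\ref{lemma:scattered_conv_seq}, the convergent sequence persisting to $V[G]$), or $St(\aA)$ is non-scattered in $V$, in which case there is a closed $L\sub St(\aA)$ and an irreducible continuous surjection $f\colon L\to\Cantor$, and $L$ is perfect. Fix an infinite antichain or, better, fix a sequence $\seqn{C_n}$ of pairwise disjoint non-empty clopen subsets of $\Cantor$ (e.g. $C_n=[0^n 1]$), and let $K_n=f^{-1}[C_n]$, a non-empty clopen subset of $L$. Pick an ultrafilter/point $y_n\in K_n$ for each $n$ — actually, since we want ground-model objects, pick in $V$ a point $y_n$ in each $K_n$. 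The idea of Booth's argument is that the unsplit real will select an infinite subset of $\{y_n:n\io\}$ that converges. But the $y_n$ are points of $St(\aA)$, not of $\omega$, so the argument must be set up through a countable Boolean algebra capturing the relevant clopen sets.

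**The countable algebra and the unsplit real.** Let $\bB\sub\aA$ be any countable subalgebra in $V$ containing (representatives of) all the $K_n$ and closed under the operations; enumerate its elements as $\seqn{B_n}$. Since $St(\aA)$ surjects onto $St(\bB)$ via a closed continuous map, it suffices to produce a non-trivial convergent sequence in $St(\bB)^{V[G]}$ — but we must be careful that the limit point and the terms lie in $St(\bB)^{V[G]}$, not merely that they are pushed down from $St(\aA)$. Concretely: the sequence $\seqn{y_n}$ (viewing each $y_n$ as an ultrafilter on $\bB$) is a sequence in the compact space $St(\bB)$. In $V[G]$ fix an unsplit real $U\in[\omega]^\omega$ over $V$. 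Consider the filter on $\bB$ generated by $\{B\in\bB:\ \{n\io:\ y_n\in [B]_\bB\}$ is cofinite in $U\}$ — equivalently, for each $B\in\bB$ let $A_B=\{n\io:\ y_n\in[B]_\bB\}\in\wo\cap V$; by unsplitness, either $U\cap A_B$ or $U\sm A_B$ is finite, so almost all $n\in U$ agree on whether $y_n\in[B]_\bB$. This defines an ultrafilter $x\in St(\bB)$ (consistency is immediate since the $A_B$ behave coherently under Boolean operations, these being among the $B_n$), and by construction the subsequence $\seqf{y_n:n\in U}$ converges to $x$ in $St(\bB)$.

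**Non-triviality and transfer.** To make the sequence non-trivial I would first arrange, using that $L$ is perfect and infinite, that the $y_n$ are pairwise distinct already in $V$ (choosing the $C_n$ and then the $y_n$ with a little care, or by passing to a further clopen refinement of each $K_n$). To ensure $x\neq y_m$ for all $m$: the disjointness of the $C_n$ gives that $y_m\in[K_m]_\bB$ while $\{n:\ y_n\in[K_m]_\bB\}=\{m\}$ is finite, hence its intersection with the infinite set $U$ is finite, so $L\sm$ — i.e. $St(\bB)\sm[K_m]_\bB$ — is in $x$; thus $x\notin[K_m]_\bB$ and in particular $x\neq y_m$. So $\seqf{y_n:n\in U}$ is a non-trivial convergent sequence in $St(\bB)^{V[G]}$. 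Finally, since $\bB$ is a subalgebra of $\aA$ in $V$ (hence in $V[G]$), $St(\aA)$ maps continuously onto $St(\bB)$; pulling back the convergent sequence along a continuous surjection between compacta, together with the fact that the limit point and all terms lie in the image, yields a convergent sequence in $St(\aA)^{V[G]}$ — and a routine thinning makes it non-trivial there too (at worst one discards the finitely many terms equal to the chosen preimage of the limit). This completes the proof.

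**Main obstacle.** The delicate point is the direction of the transfer between Boolean algebras. In Theorem~\ref{theorem:cohen} one uses that $\bB$ is a \emph{quotient} of $\aA$, so $St(\bB)$ embeds as a closed subspace of $St(\aA)$; here the natural object is a countable \emph{subalgebra} $\bB\sub\aA$, for which $St(\aA)\twoheadrightarrow St(\bB)$ only gives a surjection, and surjections do not in general reflect convergent sequences. The fix — and the step I expect to require the most care to write cleanly — is to lift the whole configuration: rather than working with a subalgebra, run the argument inside a countable \emph{quotient} of $\aA$ that still carries the clopen sets $K_n$ (possible because $St(\aA)$ is non-scattered, so it has a perfect — hence "large" — closed subset onto which we already have the irreducible map $f$ to $\Cantor$, and $\Cantor$ itself is metrizable with a countable clopen algebra); then one chooses the $y_n$ and forms $x$ inside that second-countable quotient, exactly as above, and transfers along the closed embedding $St(\bB)\hookrightarrow St(\aA)$ as in Theorem~\ref{theorem:cohen}. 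I would phrase the proof from the start in terms of $\bB=$ the clopen algebra of $L$ (a quotient of $\aA$ by Stone duality) and the countable $\pi$-base $\pP$ of Theorem~\ref{theorem:cohen}, so that the unsplit real acts on the countable index set of $\pP$ and the rest is bookkeeping.
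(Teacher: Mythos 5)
Your core mechanism is the right one---an unsplit real decides, for every ground-model trace $A_B=\{n\io:\ y_n\in[B]\}$, whether almost all selected points lie in $[B]$, which forces the selected subsequence to converge---but the proof as assembled does not close, and the obstacle you flag at the end is not resolved by your proposed fix. Working with a countable subalgebra $\bB\sub\aA$ only yields convergence of $\seq{y_n}{n\in U}$ in $St(\bB)$, and, as you yourself observe, the continuous surjection $St(\aA)\twoheadrightarrow St(\bB)$ does not reflect convergent sequences (consider $\beta\omega\twoheadrightarrow\omega+1$), so the sentence ``pulling back the convergent sequence along a continuous surjection \ldots yields a convergent sequence in $St(\aA)^{V[G]}$'' is false as stated. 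Your repair---``run the argument inside a countable quotient of $\aA$ that still carries the $K_n$''---conflates the two directions of Stone duality: the countable algebra generated by the $\pi$-base $\pP$ (or by the $K_n$) is a \emph{subalgebra} of the clopen algebra of $L$, hence corresponds to a quotient \emph{space} of $L$ (namely $\Cantor$), not to a closed subspace; the full clopen algebra of $L$ is a genuine quotient of $\aA$ but is not countable; and an infinite countable quotient algebra of $\aA$ need not exist at all (for $\aA=\wo$ it would give an infinite closed metrizable subspace of $\beta\omega$, and there is none). So the transfer step is genuinely missing.

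The observation you need is that no countable reduction is required in the first place: by definition an unsplit real is unsplit over \emph{all} of $\wo\cap V$, not merely over a countable family, and this is exactly how the paper argues---it also makes the scattered/non-scattered dichotomy and the irreducible map onto $\Cantor$ superfluous. Fix in $V$ any countable set $A=\{y_n:\ n\io\}$ of distinct points of $St(\aA)$, and in $V[G]$ let $U$ be unsplit over $V$ (viewed as an infinite subset of the index set). For \emph{every} $B\in\aA$ the trace $\{n\io:\ y_n\in[B]_\aA\}$ lies in $V$, so $U$ decides it. Taking $x$ to be any accumulation point of $\{y_n:\ n\in U\}$ in the compact space $St(\aA)$, each clopen neighbourhood $[B]_\aA$ of $x$ contains infinitely many $y_n$ with $n\in U$, hence all but finitely many of them; thus $\seq{y_n}{n\in U}$ converges to $x$ in $St(\aA)$ itself, with no transfer needed. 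Your non-triviality argument (distinct $y_n$, discarding at most one term equal to the limit) then applies verbatim.
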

\begin{proof}
We work first in $V$. Let $A\sub St(\aA)$ be an infinite countable set. Put:
\[\dD=\big\{A\cap[B]_\aA\colon\ B\in\aA,\ |A\cap[B]_\aA|=\omega\big\}.\]
Obviously, $\dD\sub\ctblsub{A}$.

Fix a $\PP$-generic filter $G$ over $V$ and let us now work in $V[G]$. By the assumption, there exists $U\sub\ctblsub{A}$ which is unsplit by $\big(\ctblsub{A}\big)^V$. It follows that for every $D\in\dD$ the set $U\cap D$ is finite or the set $U\sm D$ is finite. Since $St(\aA)$ is compact, there is a limit point $x$ of $U$ in $St(\aA)$. Enumerate $U=\{x_n\colon\ n\io\}$. We claim that the sequence $\seqn{x_n}$ converges to $x$. Indeed, let $B\in\aA$ be such that $x\in[B]_\aA$. Since $|U\cap[B]_\aA|=\omega$ and $U\in\ctblsub{A}$, we have that $|A\cap[B]_\aA|=\omega$. Note that the set $A\cap\clopen{B}_\aA$ is in $V$, 
so we get that $A\cap[B]_\aA\in\dD$, which implies that the set $U\sm[B]_\aA=U\sm(A\cap[B]_\aA)$ is finite.
\end{proof}

\section{Destroying the Nikodym property or the Grothendieck property\label{sec:destroying}}

In this section we provide two negative results. Namely, in Theorem \ref{theorem:random} we prove that adding a random real causes that no ground model Boolean algebra has the Nikodym property or the Grothendieck property, and in Theorem \ref{theorem:main_dominating} we show that after adding a dominating real no ground model Boolean algebra has the Nikodym property. We do not know whether adding dominating reals kills the Grothendieck property---see Questions \ref{ques:laver_gr} and \ref{ques:dom_gr}.

We start the section recalling several auxiliary facts---the first lemma provides an alternative definition for the Nikodym property (in fact, the one more commonly used in the literature, however lacking the apparent similarity to the definition of the Grothendieck property).

\begin{lemma}\label{lemma:nik_equiv_forms}
Let $\aA$ be a Boolean algebra. The following two conditions are equivalent:
\begin{enumerate}
	\item every pointwise null sequence of measures on $\aA$ is weak* null;
	\item every pointwise bounded sequence of measures on $\aA$ is uniformly bounded.
\end{enumerate}
\end{lemma}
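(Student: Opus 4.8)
The plan is to establish the two implications separately. The implication $(2)\Rightarrow(1)$ will rest on the elementary fact that finite linear combinations of characteristic functions of clopen sets are uniformly dense in $C(St(\aA))$, whereas $(1)\Rightarrow(2)$ will follow by combining $(1)$ with the Banach--Steinhaus theorem for the Banach space $C(St(\aA))$. Throughout we use the isometric identification between the space of measures on $\aA$ and the space of Radon measures on $St(\aA)=St$ recalled in Section~2, so that a measure $\mu$ on $\aA$ may be regarded as a bounded linear functional on $C(St(\aA))$ of the same norm.

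For $(2)\Rightarrow(1)$: let $\seqn{\mu_n}$ be a pointwise null sequence of measures on $\aA$. Being pointwise convergent it is pointwise bounded, so by $(2)$ there is $M<\infty$ with $\|\mu_n\|\le M$ for all $n$. Fix $f\in C(St(\aA))$ and $\eps>0$. Since $St(\aA)$ is compact, Hausdorff and totally disconnected, a routine covering-and-refining argument (or the Stone--Weierstrass theorem) provides a finite linear combination $g=\sum_{i=1}^m a_i\chi_{[A_i]_\aA}$ of characteristic functions of clopen sets with $\|f-g\|_\infty<\eps$. Then, for every $n$, $|\mu_n(f)|\le|\mu_n(f-g)|+|\mu_n(g)|\le M\eps+\sum_{i=1}^m|a_i|\cdot|\mu_n(A_i)|$; the final sum has finitely many terms, each tending to $0$ by the pointwise nullity of $\seqn{\mu_n}$, so $\limsup_n|\mu_n(f)|\le M\eps$. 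As $\eps>0$ was arbitrary, $\mu_n(f)\to 0$, i.e.\ $\seqn{\mu_n}$ is weak* null.

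For $(1)\Rightarrow(2)$: suppose, towards a contradiction, that $\seqn{\mu_n}$ is pointwise bounded while $\sup_n\|\mu_n\|=\infty$. Choose an increasing sequence of indices $\seqn{k_n}$ with $\|\mu_{k_n}\|\ge 4^n$ for every $n$, and set $\nu_n:=\mu_{k_n}/2^n$. For each fixed $A\iA$ we have $|\nu_n(A)|=|\mu_{k_n}(A)|/2^n\le\big(\sup_m|\mu_m(A)|\big)/2^n\to 0$, so $\seqn{\nu_n}$ is pointwise null; hence, by $(1)$, it is weak* null, i.e.\ $\nu_n(f)\to 0$ for every $f\in C(St(\aA))$. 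Regarding each $\nu_n$ as a bounded linear functional on the Banach space $C(St(\aA))$, the sequence $\seqn{\nu_n}$ is then pointwise bounded on $C(St(\aA))$, so the Banach--Steinhaus theorem yields $\sup_n\|\nu_n\|<\infty$. This is absurd, since $\|\nu_n\|=\|\mu_{k_n}\|/2^n\ge 2^n\to\infty$. Therefore $\sup_n\|\mu_n\|<\infty$, i.e.\ $\seqn{\mu_n}$ is uniformly bounded.

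The argument is short, and I do not foresee a real obstacle. The one genuinely non-routine choice is the normalization in $(1)\Rightarrow(2)$: dividing $\mu_{k_n}$ by $2^n$ rather than by $\|\mu_{k_n}\|$ is precisely what makes $\seqn{\nu_n}$ simultaneously pointwise null (and hence, via $(1)$, weak* null) and of norm tending to $\infty$, which is exactly the configuration ruled out by Banach--Steinhaus. The density of clopen step functions in $C(St(\aA))$ needed for $(2)\Rightarrow(1)$ is a standard fact about totally disconnected compact Hausdorff spaces.
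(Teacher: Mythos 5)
Your proof is correct and follows essentially the same route as the paper: both directions use the same two ingredients (density of clopen step functions in $C(St(\aA))$ for $(2)\Rightarrow(1)$, and a renormalization of a pointwise bounded, norm-unbounded sequence into a pointwise null sequence of unbounded norm, contradicting Banach--Steinhaus, for $(1)\Rightarrow(2)$). The only cosmetic difference is the normalization factor --- the paper divides $\mu_n$ by $\sqrt{\|\mu_n\|}$ after passing to a subsequence with $\|\mu_n\|>n$, whereas you divide $\mu_{k_n}$ by $2^n$ after choosing $\|\mu_{k_n}\|\ge 4^n$ --- and both serve the identical purpose.
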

\begin{proof}
Assume (1) and suppose that there exists a sequence $\seqn{\mu_n}$ of measures on $\aA$ which is pointwise bounded but not uniformly bounded. By going to the subsequence, we may assume that $\big\|\mu_n\big\|>n$ for every $n\io$. For each $n\io$ define the measure $\nu_n$ on $\aA$ as follows:
\[\nu_n=\mu_n\big/\sqrt{\big\|\mu_n\big\|}.\]
It follows that $\big\|\nu_n\big\|=\sqrt{\big\|\mu_n\big\|}>\sqrt{n}$. 
On the other hand, for every $A\iA$ we have:
\[\big|\nu_n(A)\big|=\big|\mu_n(A)\big|\big/\sqrt{\big\|\mu_n\big\|},\]
which converges to $0$ as $n\to\infty$ (because $\sup_{n\io}\big|\mu_n(A)\big|<\infty$), which contradicts (1) as weak* null sequences are always uniformly bounded (by the virtue of the Banach--Steinhaus theorem). Hence, (2) holds.

Assume now (2) and let $\seqn{\mu_n}$ be a pointwise null sequence of measures on $\aA$. It follows immediately that $\seqn{\mu_n}$ is pointwise bounded, hence, by (2), it is uniformly bounded. Let $M>0$ be such that $\sup_{n\io}\big\|\mu_n\big\|<M$. Fix $f\in C(St(\aA))$ and let $\eps>0$. There are finite sequences $A_1,\ldots,A_k\iA$ and $\alpha_1,\ldots,\alpha_k\in\R$ such that
\[\Big\|f-\sum_{i=1}^k\alpha_i\cdot\chi_{\clopen{A_i}_\aA}\Big\|<\eps/(2M).\]
Since $\seqn{\mu_n}$ is pointwise null, there is $N\io$ such that for every $n>N$ we have:
\[\sum_{i=1}^k\big|\alpha_i\big|\cdot\big|\mu_n\big(A_i\big)\big|<\eps/2.\]
Thus, for every $n>N$ it holds:
\[\big|\mu_n(f)\big|<\big|\mu_n\Big(f-\sum_{i=1}^k\alpha_i\cdot\chi_{\clopen{A_i}_\aA}\Big)\big|+\big|\mu_n\Big(\sum_{i=1}^k\alpha_i\cdot\chi_{\clopen{A_i}_\aA}\Big)\big|\le\]
\[\le\big\|\mu_n\big\|\cdot\Big\|f-\sum_{i=1}^k\alpha_i\cdot\chi_{\clopen{A_i}_\aA}\Big\|+\sum_{i=1}^k\big|\alpha_i\big|\cdot\big|\mu_n\big(A_i\big)\big|<\eps.\]
It follows that $\mu_n(f)\to0$ as $n\to\infty$, which proves that $\seqn{\mu_n}$ is weak* null. Consequently, (1) holds.
\end{proof}

From the proof of implication (2)$\Rightarrow$(1) we immediately get the following corollary.

\begin{corollary}\label{cor:pnub_wsn}
Let $\aA$ be a Boolean algebra. If $\seqn{\mu_n}$ is a pointwise null uniformly bounded sequence of measures on $\aA$, then $\seqn{\mu_n}$ is weak* null.\noproof
\end{corollary}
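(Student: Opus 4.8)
The plan is to extract exactly the second half of the proof of Lemma~\ref{lemma:nik_equiv_forms}: there, after invoking (2) to produce a uniform bound on $\big\|\mu_n\big\|$, one derives weak* nullity from pointwise nullity together with that bound, and here the uniform bound is part of the hypothesis, so no appeal to (2) is needed at all. This is precisely why the statement is a corollary rather than a new result.

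Concretely, I would fix $M>0$ with $\sup_{n\io}\big\|\mu_n\big\|<M$, take an arbitrary $f\in C(St(\aA))$ and $\eps>0$, and then approximate $f$ in the supremum norm to within $\eps/(2M)$ by a finite $\R$-linear combination $\sum_{i=1}^k\alpha_i\chi_{\clopen{A_i}_\aA}$ of characteristic functions of clopen subsets of $St(\aA)$; this is possible because $St(\aA)$ is compact and totally disconnected, so such simple functions form a dense subspace of $C(St(\aA))$. Using that $\seqn{\mu_n}$ is pointwise null, I would then choose $N\io$ such that $\sum_{i=1}^k\big|\alpha_i\big|\cdot\big|\mu_n(A_i)\big|<\eps/2$ for all $n>N$.

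The final step is the same triangle-inequality estimate as in the lemma: for every $n>N$,
\[
\big|\mu_n(f)\big|\le\big\|\mu_n\big\|\cdot\Big\|f-\sum_{i=1}^k\alpha_i\chi_{\clopen{A_i}_\aA}\Big\|+\sum_{i=1}^k\big|\alpha_i\big|\cdot\big|\mu_n(A_i)\big|<M\cdot\frac{\eps}{2M}+\frac{\eps}{2}=\eps,
\]
so $\mu_n(f)\to0$ for every $f\in C(St(\aA))$, i.e. $\seqn{\mu_n}$ is weak* null. I do not expect any real obstacle: the only ingredient beyond bookkeeping is the density of simple functions in $C(K)$ for zero-dimensional compact $K$, which is standard, and everything else is a verbatim repetition of the estimate already carried out in the proof of Lemma~\ref{lemma:nik_equiv_forms}.
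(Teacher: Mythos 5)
Your proposal is correct and is exactly the argument the paper intends: the corollary is stated with no proof precisely because, as you observe, the (2)$\Rightarrow$(1) half of the proof of Lemma \ref{lemma:nik_equiv_forms} uses only the uniform bound and pointwise nullity, and your write-up reproduces that estimate verbatim with the bound taken as a hypothesis. Nothing is missing.
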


If $X$ is a topological space and $x\in X$, then by $\delta_x$ we denote the Borel one-point measure on $X$ concentrated at $x$. Recall that a measure $\mu$ on a compact space $K$ (a Boolean algebra $\aA$) is \textit{finitely supported} or has \textit{finite support} if there exist finite sequences $x_1,\ldots,x_n$ of pairwise distinct points in $K$ (in $St(\aA)$) and $\alpha_1,\ldots,\alpha_n\in\R$ such that $\mu=\sum_{i=1}^n\alpha_i\delta_{x_i}$. The set $\big\{x_1,\ldots,x_n\big\}$ is called \textit{the support} of $\mu$ and denoted by $\supp(\mu)$.

We will need the following simple lemma.

\begin{lemma}\label{lemma:fs_no_nik_no_gr}
Let $\aA$ be a Boolean algebra. If there exists a sequence $\seqn{\mu_n}$ of finitely supported measures on $\aA$ which is pointwise null but not uniformly bounded, then $\aA$ has neither the Nikodym property nor the Grothendieck property.
\end{lemma}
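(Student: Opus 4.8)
The plan is to dispatch the two properties separately. The failure of the Nikodym property will be essentially immediate from Lemma~\ref{lemma:nik_equiv_forms}, whereas the failure of the Grothendieck property will require a gliding‑hump construction modeled on the proof of Schur's theorem for $\ell_1$.

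First I would pass to a subsequence so that $\|\mu_n\|>n$ for every $n$; then $\seqn{\mu_n}$ is pointwise null (hence pointwise bounded) but not uniformly bounded, so Lemma~\ref{lemma:nik_equiv_forms} already shows that $\aA$ lacks the Nikodym property. (Equivalently: if $\aA$ had the Nikodym property, then the pointwise null sequence $\seqn{\mu_n}$ would be weak* null and hence, by Banach--Steinhaus, uniformly bounded, a contradiction.) For the Grothendieck property I would normalize, putting $\lambda_n:=\mu_n/\|\mu_n\|$. Each $\lambda_n$ is finitely supported with $\|\lambda_n\|=1$, and since $\|\mu_n\|\ge1$ we have $|\lambda_n(A)|\le|\mu_n(A)|\to0$ for every $A\iA$; thus $\seqn{\lambda_n}$ is pointwise null and uniformly bounded, so by Corollary~\ref{cor:pnub_wsn} it is weak* null. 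It therefore suffices to exhibit a Borel set $B\sub St(\aA)$ with $\lambda_n(B)\not\to0$ along some subsequence, for then $\seqn{\lambda_n}$ witnesses that $\aA$ is not Grothendieck. I write $\hat\lambda_n(y):=\lambda_n(\{y\})$ for $y\in St(\aA)$ and record that $\sum_{y\in\supp(\lambda_n)}|\hat\lambda_n(y)|=\|\lambda_n\|=1$ and that every countable subset of the Hausdorff space $St(\aA)$ is Borel (being a countable union of closed singletons).

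The argument then splits into two cases. If $\hat\lambda_n(y)\not\to0$ for some point $y$, take $B:=\{y\}$ and we are done. Otherwise $\hat\lambda_n(y)\to0$ for every $y$ in the countable set $S:=\bigcup_n\supp(\lambda_n)$, and I would run a gliding hump: recursively choose $n_1<n_2<\cdots$ and pairwise disjoint finite sets $T_k\sub S$ with $\sum_{y\in S\sm T_k}|\hat\lambda_{n_k}(y)|<2^{-k-1}$. Indeed, with $G_{k-1}:=T_1\cup\cdots\cup T_{k-1}$ finite, one first uses $\hat\lambda_n(y)\to0$ to pick $n_k>n_{k-1}$ with $\sum_{y\in G_{k-1}}|\hat\lambda_{n_k}(y)|<2^{-k-2}$, then picks a finite $T_k'\sub S$ capturing all but a $2^{-k-2}$ fraction of the unit $\ell_1$-mass of $\lambda_{n_k}$, and sets $T_k:=T_k'\sm G_{k-1}$. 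Splitting each $T_k$ into the points where $\hat\lambda_{n_k}$ is positive and those where it is negative, one part carries variation $>(1-2^{-k-1})/2>1/4$; passing to a sub-subsequence on which this dominant sign is constant — say positive — I obtain finite sets $T_k^+$ with $\lambda_{n_k}(T_k^+)>1/4$. Finally set $B:=\bigcup_k T_k^+$, a countable hence Borel subset of $St(\aA)$. By pairwise disjointness of the $T_j$, $B\sm T_k^+\sub S\sm T_k$, so $|\lambda_{n_k}(B)-\lambda_{n_k}(T_k^+)|\le\sum_{y\in S\sm T_k}|\hat\lambda_{n_k}(y)|<2^{-k-1}$, whence $\lambda_{n_k}(B)>1/8$ for $k\ge2$. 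Thus $\lambda_{n_k}(B)\not\to0$, and $\aA$ fails the Grothendieck property.

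The main obstacle is the second case: one must produce a \emph{single} Borel set that simultaneously detects a fixed positive amount of the mass of every $\lambda_{n_k}$, with no control over how the supports of distinct $\lambda_{n_k}$ overlap and without positivity of the measures. The gliding‑hump bookkeeping is precisely what resolves this: concentrating almost all of the unit $\ell_1$-mass of $\lambda_{n_k}$ onto $T_k$ forces the contributions of all the pieces $T_j^+$ with $j\neq k$ — both earlier and later — which all lie outside $T_k$, to perturb $\lambda_{n_k}(B)$ by at most $2^{-k-1}$; and the sign-splitting step is the extra ingredient needed because the $\mu_n$, hence the $\lambda_n$, are signed rather than positive.
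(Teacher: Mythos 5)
Your proof is correct, and the Grothendieck half takes a genuinely different route from the paper's. The Nikodym half coincides with the paper's (the sequence is pointwise null but, being non-uniformly bounded, cannot be weak* null by Banach--Steinhaus, so condition (2) of Lemma \ref{lemma:nik_equiv_forms} fails). For the Grothendieck property both arguments normalize to $\nu_n=\mu_n/\|\mu_n\|$ and invoke Corollary \ref{cor:pnub_wsn} to get a weak* null sequence of norm one; the difference is in showing this sequence is not weakly null. The paper observes that all $\nu_n$ live in $\ell_1(S)$ for $S=\bigcup_n\supp(\nu_n)$, a closed subspace of $C(St(\aA))^*$, and cites the Schur property of $\ell_1(S)$ (together with the identification, via \cite[Theorem 11, p.~90]{Die84}, of weak convergence in $M(K)$ with convergence on all Borel sets) to conclude that a normalized sequence cannot be weakly null. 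You instead unpack the Schur property into an explicit gliding-hump/disjointification: either some singleton already witnesses $\nu_n(\{y\})\not\to 0$, or all point masses vanish and you extract pairwise disjoint finite blocks $T_k$ carrying almost all of the unit variation of $\nu_{n_k}$, split by sign, and exhibit a single countable (hence Borel) set $B$ with $|\nu_{n_k}(B)|>1/8$ eventually. Your version is longer but more self-contained: it produces the offending Borel set directly from the definition of ``weakly null'' used in the paper, avoiding both the Schur property as a black box and the Dieudonn\'e--Grothendieck equivalence of weak and setwise convergence; the paper's version buys brevity at the cost of those two citations. All the quantitative bookkeeping in your construction (the $2^{-k-1}$ tail bounds, the $(1-2^{-k-1})/2>1/4$ sign-splitting estimate, and the final perturbation bound) checks out.
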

\begin{proof}
$\seqn{\mu_n}$ directly witnesses the lack of the Nikodym property. Consider the sequence $\seqn{\nu_n}$ defined as $\nu_n=\mu_n/\big\|\mu_n\big\|$ for every $n\io$. Since it is pointwise null, too, and uniformly bounded, by Corollary \ref{cor:pnub_wsn} it is weak* null. Set $S=\bigcup_{n\io}\supp\big(\nu_n\big)$ and note that the Banach space $\ell_1(S)$ of all absolutely summable sequences on the set $S$ is a closed linear subspace of the dual space $C(St(\aA))^*$ containing every $\nu_n$. Since $\ell_1(S)$ has the Schur property (meaning that the weak convergence of sequences implies their norm convergence), the sequence $\seqn{\nu_n}$ cannot be weakly null, as $\big\|\nu_n\big\|=1$ for every $n\io$. In particular, $\aA$ does not have the Grothendieck property.
\end{proof}

\subsection{Random reals. Destroying the Nikodym and Grothendieck properties\label{sec:random}}

In order to prove Theorem \ref{theorem:random}, we need to recall some basic facts concerning the binomial distributions. Let $(\Omega,\Sigma,\Pr)$ be a probability space. Given $p\in (0,1)$, for every $i\io$ let $X_i$ be a random variable taking only two values: $0$ and $1$, and such that the following two equalities hold:
\[\Pr\Big(\big\{t\in\Omega: X_i(t)=1\big\}\Big)=\Pr\Big(X_i^{-1}(1)\Big)=p,\]
and
\[\Pr\Big(\big\{t\in\Omega: X_i(t)=0\big\}\Big)=\Pr\Big(X_i^{-1}(0)\Big)=1-p.\]
Assume additionally that the sequence $\seqi{X_i}$ is \textit{independent}, that is, for every $n>0$ and $s\in 2^n$ we have
\[\Pr\Big(\big\{t\in\Omega\colon\ X_i(t)=s(i)\text{ for every }i<n\big\}\Big)=\Pr\Big(\bigcap_{i<n}X_i^{-1}\big(s(i)\big)\Big)=\prod_{i<n} p_i,\]
where $p_i=p$ if $s(i)=1$, and $p_i=1-p$ otherwise. The following classical fact is crucial for our proof of Theorem \ref{theorem:random}; for its proof see e.g. \cite[Section 1.3]{Bol01}. Recall that $\exp(x)=e^x$ for $x\in\R$.

\begin{theorem}\label{theorem:close_to_center}
Suppose that $p\in(0,1/2]$, $m\io$, and $\eps\in(0,1/12]$ are such that $\eps p(1-p)m\ge 12$. Then,
\[\Pr\Big(\big\{t\in\Omega\colon\ \big|\sum_{i<m}X_i(t)-pm\big|\ge\eps pm\big\}\Big) \le (\eps^2 pm)^{-1/2}\cdot\exp\big(-\eps^2 pm/3\big).\]\noproof
\end{theorem}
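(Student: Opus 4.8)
The plan is to prove this as a sharp Chernoff-type estimate by \emph{directly} bounding the binomial point probabilities, rather than going through the moment generating function of $S=\sum_{i<m}X_i$; it is exactly the direct route that produces the polynomial prefactor $(\eps^2pm)^{-1/2}$, which the moment-generating-function argument loses. By the independence hypothesis $S$ has the binomial distribution with parameters $m$ and $p$, so, writing $b(k):=\Pr(S=k)$, we have $b(k)=\binom{m}{k}p^k(1-p)^{m-k}$ for $0\le k\le m$. The event in question is $\{S\ge(1+\eps)pm\}\cup\{S\le(1-\eps)pm\}$, and I would bound the two one-sided tails separately and add them. The upper tail is the one that governs the constants; the lower tail is handled by the same computation and yields a bound of the same form (indeed a slightly smaller one, since its classical Chernoff exponent is $\eps^2pm/2$ rather than $\eps^2pm/3$), so I concentrate on $\Pr\big(S\ge(1+\eps)pm\big)$.

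For the upper tail, put $k_0=\lceil(1+\eps)pm\rceil$. Since
\[
\frac{b(k+1)}{b(k)}=\frac{(m-k)\,p}{(k+1)(1-p)}
\]
is decreasing in $k$, for every $k\ge k_0$ this ratio is at most $\rho:=\frac{1-(1+\eps)p}{(1+\eps)(1-p)}<1$, and hence
\[
\Pr\big(S\ge(1+\eps)pm\big)=\sum_{k\ge k_0}b(k)\le b(k_0)\sum_{j\ge0}\rho^j=\frac{b(k_0)}{1-\rho}.
\]
A direct computation gives $1-\rho=\frac{\eps}{(1+\eps)(1-p)}$, which, since $\eps\le1/12$, is bounded below by a fixed multiple of $\eps$. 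Thus everything is reduced to estimating the \emph{single} point probability $b(k_0)$.

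For $b(k_0)$ I would apply Stirling's formula in the two-sided form $\sqrt{2\pi n}\,(n/e)^n\le n!\le\sqrt{2\pi n}\,(n/e)^n e^{1/(12n)}$ to each of $m!$, $k_0!$, $(m-k_0)!$. Writing $a=k_0/m$ (so $a=(1+\eps)p$ up to an error of order $1/m$), this yields
\[
b(k_0)\le\frac{C}{\sqrt{m\,a(1-a)}}\,\exp\big(-m\,D(a\,\|\,p)\big),\qquad D(a\,\|\,p):=a\ln\frac{a}{p}+(1-a)\ln\frac{1-a}{1-p},
\]
for an absolute constant $C$; here $p\le1/2$ together with $\eps\le1/12$ keeps $(1+\eps)p$ bounded away from $1$, so that $a(1-a)$ stays comparable to $p(1-p)$, and the hypothesis $\eps p(1-p)m\ge12$ forces $k_0$ and $m-k_0$ to be large enough for the Stirling error factors (and the integer rounding in $k_0$) to be absorbed into $C$. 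It remains to bound $D$ from below. Since $D(p\,\|\,p)=0$, $\partial_a D(a\,\|\,p)\big|_{a=p}=0$, and $\partial_a^2 D(a\,\|\,p)=\frac1{a(1-a)}$, Taylor's theorem gives $D(a\,\|\,p)=\frac{(a-p)^2}{2\xi(1-\xi)}$ for some $\xi$ between $p$ and $(1+\eps)p$; using $\xi(1-\xi)<(1+\eps)p(1-p)$ and $a-p=\eps p$ (modulo the $1/m$ correction) this gives $D(a\,\|\,p)\ge\frac{\eps^2 p}{2(1+\eps)(1-p)}$, which for $\eps\le1/12$ and $p\le1/2$ exceeds $\eps^2p/3$ by a definite factor. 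Combining the three estimates, and using the surplus in the divergence bound to absorb the absolute constants coming from $C$ and from $1/(1-\rho)$ together with the identity $\frac1{\eps\sqrt{mp(1-p)}}\le\frac{\sqrt2}{\sqrt{\eps^2pm}}$, one obtains that each tail is at most $\frac12(\eps^2pm)^{-1/2}\exp(-\eps^2pm/3)$, and adding them gives the theorem.

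The conceptual content is entirely in the first two paragraphs; the obstacle I expect is bookkeeping with the constants. The crude moment-generating-function computation already delivers the exponent $-\eps^2pm/3$, but only with a fixed multiplicative constant in front, so the real work is verifying that the Stirling constant, the factor $1/(1-\rho)$, the rounding at $k_0$, and the Taylor remainder all fit together — precisely using the extra room in $D(a\,\|\,p)\ge\frac{\eps^2p}{2(1+\eps)(1-p)}$ — so that the prefactor comes out exactly $(\eps^2pm)^{-1/2}$ with no stray factor of $2$. It is probably cleanest to split into the case where $\eps^2pm$ is bounded, in which the right-hand side of the asserted inequality is large and even the crude Chernoff bound suffices, and the case where $\eps^2pm$ is large, in which the exponential gap dominates everything; this split is exactly where the two numerical hypotheses $\eps\le1/12$ and $\eps p(1-p)m\ge12$ do their work, and it is the route carried out in detail in \cite[Section~1.3]{Bol01}.
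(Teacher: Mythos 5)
The paper does not actually prove this statement: it is quoted as a classical fact with the proof deferred to \cite[Section 1.3]{Bol01}, and your outline reconstructs precisely the argument carried out there --- bounding each tail by a geometric series starting from the extreme point probability, estimating that point probability via Stirling's formula, and controlling the exponent through the second-order Taylor expansion of the relative entropy. Your sketch is correct in all its essential steps (the monotone ratio bound, the identity $1-\rho=\frac{\eps}{(1+\eps)(1-p)}$, the Stirling prefactor $\bigl(2\pi m\,a(1-a)\bigr)^{-1/2}$, and the lower bound $D(a\,\|\,p)\ge\frac{\eps^2p}{2(1+\eps)(1-p)}>\eps^2p/3$), so all that separates it from a complete proof is the constant bookkeeping you already identify, which is exactly what the hypotheses $\eps\le 1/12$ and $\eps p(1-p)m\ge 12$ are calibrated to absorb.
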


In what follows we fix $p=1/2$. Put $\Omega=\Cantor$ and let $\Sigma$ denote the standard Borel $\sigma$-field on $\Omega$ and $\lambda$ the standard product measure on $\Omega$. We will now work in the probability space $(\Omega,\Sigma,\lambda)$. For every $i\io$ and $x\in\Omega$ set $X_i(x)=x(i)$, i.e., the function $X_i$ is simply the projection onto the $i$-th coordinate. Obviously, the sequence $\seqi{X_i}$ of random variables is as described in the paragraph before Theorem \ref{theorem:close_to_center}.

\begin{lemma}\label{lem:prob_borelcant}
For every $n\io$ set $I_n=\big\{2^n+1, 2^n+2,\ldots, 2^{n+1}\big\}$. Suppose that for some infinite $J\subset\omega$ and for every $n\in J$ there is a subset $Y_n\sub I_n$ such that $\eta=\inf\big\{\eta_n\colon\ n\in J\big\}>0$, where $\eta_n=\big|Y_n\big|/2^n$ for each $n\io$. For each $n\in J$ let $m_n=\big|Y_n\big|$ ($=\eta_n 2^n$) and $\eps_n=\sqrt{n/2^n}$, and assume that $\eps_n\le 1/12$ and $\eps_n p^2 m_n=\frac{1}{4}\eta_n\sqrt{n 2^n}\ge 12$. For every $n\in J$ put:
\[A_n=\Big\{x\in\Cantor\colon\ \Big|\sum_{i\in Y_n}x(i)-\frac{1}{2}\cdot\eta_n2^n\Big|\ge\frac{1}{2}\cdot\eta_n\sqrt{n 2^n}\Big\}.\]
Then,
\[\tag{$\dagger$}\lambda\Big(\bigcup_{n\in J}\bigcap_{\substack{k\in J\\k\ge n}}A_k^c\Big)=\lambda\big(\big\{x\in\Cantor\colon\ x\not\in A_n\text{ for almost all }n\in J\big\}\big)=1.\]
\end{lemma}
\begin{proof}
Applying Theorem \ref{theorem:close_to_center} (for $m=m_n$ and $\eps=\eps_n$), for every $n\in J$ we get:
\begin{align*}
\lambda\Big(\Big\{x\in\Cantor\colon\ \Big|\sum_{i\in Y_n}x(i)-\frac{1}{2}\cdot\eta_n 2^n\Big|&\ge\sqrt{n/2^n}\cdot\frac{1}{2}\cdot\eta_n2^n\Big\}\Big)\le\\  &\le\Big(\frac{n}{2^n}\cdot\frac{1}{2} \cdot \eta_n2^n\Big)^{-1/2}\cdot\exp\Big(-\frac{n}{2^n}\cdot\frac{1}{2}\cdot\eta_n 2^n\cdot\frac{1}{3}\Big),
\end{align*}
which after simplification reduces to:
\[\tag{$*$}\lambda\big(A_n\big)=\lambda\Big(\Big\{x\in\Cantor\colon\Big|\sum_{i\in Y_n}x(i)-\frac{1}{2}\cdot \eta_n2^n\Big|\geq\frac{1}{2}\cdot\eta_n\sqrt{n 2^n}\Big\}\Big)\le\sqrt{\frac{2}{n\eta_n}}\cdot\exp\big(-n\eta_n/6\big).\]
Observe that ($*$) actually implies that $\sum_{n\in J}\lambda(A_n)<\infty$ (because $\eta>0$), and hence by the Borel--Cantelli lemma we get ($\dagger$):
\[\lambda\Big(\bigcup_{n\in J}\bigcap_{\substack{k\in J\\k\ge n}}A_k^c\Big)=\lambda\big(\big\{x\in\Cantor\colon\ x\not\in A_n\text{ for almost all }n\in J\big\}\big)=1.\]
\end{proof}

We are now in the position to present the proof of the main theorem of this section. We will use the following definition of a random real: Given a (forcing) extension $V'$ of $V$, a real $r\in\Cantor$ is \textit{a random real over $V$} if for every Borel subset $B$ of $\Cantor$, coded in $V$ and such that $\big(\lambda(B)=0\big)^V$, the real $r$ does not belong to the interpretation of $B$ in $V'$. (We will abuse the notation and denote this interpretation by $B$, too.)

\begin{theorem}\label{theorem:random}
Let $\PP\in V$ be a notion of forcing adding a random real and $\aA\in V$ an infinite Boolean algebra. Assume that $G$ is a $\PP$-generic filter over $V$. Then, in $V[G]$, $\aA$ has neither the Nikodym property nor the Grothendieck property.
\end{theorem}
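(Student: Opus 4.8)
The plan is to apply Lemma~\ref{lemma:fs_no_nik_no_gr}: it suffices to produce, in $V[G]$, a sequence $\seqn{\mu_n}$ of finitely supported measures on $\aA$ that is pointwise null but not uniformly bounded. Since $\aA$ is infinite, $St(\aA)$ is infinite, so working in $V$ I first fix a sequence $\seqi{x_i}$ of pairwise distinct points of $St(\aA)$. Let $r\in\Cantor$ be the random real over $V$ added by $\PP$, and, with the blocks $I_n=\{2^n+1,\dots,2^{n+1}\}$ from the discussion preceding Theorem~\ref{theorem:close_to_center}, I set, for all sufficiently large $n$,
\[\mu_n=\frac{1}{\sqrt{n\,2^n}}\sum_{i\in I_n}(-1)^{r(i)}\,\delta_{x_i}.\]
Each $\mu_n$ is finitely supported (its support is $\{x_i:\ i\in I_n\}$, a set of $2^n$ distinct points), and, precisely because these points are distinct, $\|\mu_n\|=2^n/\sqrt{n\,2^n}=\sqrt{2^n/n}$; hence $\|\mu_n\|\to\infty$ and $\seqn{\mu_n}$ is not uniformly bounded. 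Note that this part is deterministic, using no property of $r$ beyond $r\in\Cantor$, so it holds in $V[G]$ regardless.

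The substantive point is that $\seqn{\mu_n}$ is pointwise null in $V[G]$. Since $\aA\in V$, the underlying set of $\aA$ is unchanged by forcing, so it is enough to fix $A\in\aA$ and show $\mu_n(A)\to 0$ in $V[G]$. Work in $V$ and put $Y_n^A=\{i\in I_n:\ x_i\in[A]_\aA\}$, a ground-model subset of $I_n$; for $x\in\Cantor$ let $\mu^{(x)}_n(A)=\frac{1}{\sqrt{n2^n}}\sum_{i\in Y_n^A}(-1)^{x(i)}=\frac{2}{\sqrt{n2^n}}\big(\tfrac12|Y_n^A|-\sum_{i\in Y_n^A}x(i)\big)$ be the value at $A$ of the measure built from $x$ in place of $r$. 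The set
\[B_A=\big\{x\in\Cantor:\ \mu^{(x)}_n(A)\not\to 0\big\}=\bigcup_{k\ge1}\ \bigcap_{N\io}\ \bigcup_{n\ge N}\big\{x\in\Cantor:\ \big|\mu^{(x)}_n(A)\big|\ge 1/k\big\}\]
is Borel with a code lying in $V$, since each inner set is clopen in $\Cantor$ (depending only on $x\rstr I_n$) and is built from the ground-model data $Y_n^A$ and $n$. The goal is then $\big(\lambda(B_A)=0\big)^V$. To get it, fix $\eps>0$ and bound $\sum_n\lambda\big(\big\{x:\ |\mu^{(x)}_n(A)|\ge\eps\big\}\big)$ by splitting each term according to the size of $Y_n^A$: if $|Y_n^A|<\eps\sqrt{n2^n}$ then $|\mu^{(x)}_n(A)|\le|Y_n^A|/\sqrt{n2^n}<\eps$ for every $x$ and the set is empty; otherwise I apply Theorem~\ref{theorem:close_to_center} with $p=1/2$, $m=|Y_n^A|$, and parameter $\eps_{\mathrm{thm}}=\min\{1/12,\ \eps\sqrt{n2^n}/|Y_n^A|\}$ (which for all large $n$ satisfies the hypotheses of that theorem), in exactly the manner in which the inequalities (1) and (2) above were derived, obtaining for all large $n$
\[\lambda\big(\big\{x:\ \big|\mu^{(x)}_n(A)\big|\ge\eps\big\}\big)\le\big(\eps^2 n/2\big)^{-1/2}\exp\big(-\eps^2 n/6\big),\]
whose right-hand side is summable in $n$. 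The Borel--Cantelli lemma then gives $\lambda\big(\bigcap_{N}\bigcup_{n\ge N}\big\{|\mu^{(\cdot)}_n(A)|\ge\eps\big\}\big)=0$, and taking the union over $\eps=1/k$, $k\ge1$, yields $\lambda(B_A)=0$. Since $r$ is random over $V$ it avoids the reinterpretation of $B_A$ in $V[G]$, which is precisely the statement that $\mu_n(A)\to 0$ in $V[G]$. As $A\in\aA$ was arbitrary, $\seqn{\mu_n}$ is pointwise null in $V[G]$, and Lemma~\ref{lemma:fs_no_nik_no_gr} finishes the argument.

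The main obstacle I expect is that $\aA$ may be uncountable, so one cannot handle all $A\in\aA$ with a single genericity or Borel--Cantelli argument. This is circumvented by the observation that pointwise nullity concerns each $A$ separately: for a fixed $A$ one needs only the single ground-model-coded Borel set $B_A$ to be $\lambda$-null, which the random real then automatically misses---there is nothing to union over. The only genuinely delicate estimate is the per-pair $(A,n)$ bound above, where the ``local support'' $Y_n^A$ is an arbitrary subset of $I_n$ of a priori unknown size: one must separate the regime in which $|Y_n^A|$ is too small for Theorem~\ref{theorem:close_to_center} to be informative (handled by the crude bound $|\mu^{(x)}_n(A)|\le|Y_n^A|/\sqrt{n2^n}$) from the regime in which it is large (handled by Theorem~\ref{theorem:close_to_center}, with $\eps_{\mathrm{thm}}$ capped at $1/12$ to respect its hypothesis $\eps_{\mathrm{thm}}\le 1/12$). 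The normalisation $1/\sqrt{n\,2^n}$ is dictated by the need for $\|\mu_n\|\to\infty$ and for the tail probabilities to remain summable at the same time.
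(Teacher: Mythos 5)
Your proposal is correct and follows essentially the same route as the paper's proof: the same blocks $I_n$, the same random signs $(-1)^{r(i)}$ with a $\sqrt{2^n}$-scale normalisation, the concentration inequality of Theorem \ref{theorem:close_to_center} plus Borel--Cantelli applied to the ground-model sets $Y_n^A$, absoluteness of a Borel code from $V$, and Lemma \ref{lemma:fs_no_nik_no_gr} to conclude. The only divergence is a minor technical one: where the paper splits $\omega$ into $J$ and $J^c$ according to whether $|Y_n|/2^n\ge 1/2$ and treats $J^c$ by running the argument for $1_\aA$ and $1_\aA\sm A$ and subtracting, you instead handle an arbitrary $|Y_n^A|$ directly, using the trivial bound when $|Y_n^A|<\eps\sqrt{n2^n}$ and a rescaled $\eps_{\mathrm{thm}}$ in the concentration inequality otherwise --- a correct and slightly cleaner dispatch of the same case.
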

\begin{proof}
In $V$, let $\seqi{x_i}$ be a sequence of ultrafilters in $St(\aA)$ such that $x_i\neq x_j$ for $i\neq j\io$.

From now on we work exclusively in $V[G]$. 
Let $\varphi\colon\omega\to St(\aA)$ be such that $\varphi(i)=x_i$ for every $i\io$. Let $r\in\Cantor\cap V[G]$ be a random real over $V$. Set $\omega_+=\omega\sm\{0\}$. For every $n\io_+$ consider the measure $\mu_n$ on $\aA$ defined as follows:
\[\mu_n(A)=\alpha_n\cdot\sum_{i\in I_n}(-1)^{r(i)+1}\cdot\delta_{x_i}\big(\clopen{A}_\aA\big),\]
where $\alpha_n=1/\big(n\sqrt{2^n}\big)$ and $I_n=\big\{2^n+1,2^n+2,\ldots,2^{n+1}\big\}$. It follows that $\mu_n$ is finitely supported, $\supp\big(\mu_n\big)=\varphi\big[I_n\big]$, and
\[\big\|\mu_n\big\|=\alpha_n\cdot 2^n=\sqrt{2^n}/{n},\]
so $\lim_{n\to\infty}\big\|\mu_n\big\|=\infty$.

We claim that $\seqn{\mu_n}$ is pointwise null. Let us fix $A\iA$ and for every $n\io_+$ set
\[Y_n=\big\{i\in I_n\colon\ A\in x_i\big\}.\]
Of course, $Y_n\in V$. 
Put:
\[J=\big\{n\io_+\colon\ \big|Y_n\big|\big/2^n\ge1/2\big\}\quad\text{and}\quad J^c=\omega_+\sm J=\big\{n\io_+\colon\ \big|Y_n\big|\big/2^n<1/2\big\}.\]
Again, $J,J^c\in V$.

Assume first that $J$ is infinite. We will prove that $\mu_n(A)\to 0$ as $n\to\infty$, $n\in J$. For every $n\in J$ set also $\eta_n=\big|Y_n\big|/2^n$ and let $A_n$ be the clopen subset of $\Cantor$ such as defined in Lemma \ref{lem:prob_borelcant}. 
By the definition of $J$, we get that
\[\eta=\inf\big\{\eta_n\colon\ n\in J\big\}\ge1/2>0,\]
hence equation ($\dagger$) of Lemma \ref{lem:prob_borelcant} together with the definition of a random real imply that $r\not\in A_n$ for all but finitely many $n\in J$, which means that
\[\Big|\sum_{i\in Y_n}r(i)-\frac{1}{2}\cdot\eta_n 2^n\Big|<\frac{1}{2}\cdot\eta_n\sqrt{n 2^n}\]
for all but finitely many $n\in J$, and thus there is $n_0\io_+$ such that for all $n\in J$, $n\ge n_0$ we have (note that $\eta_n\le1$):
\[\Big|\sum_{i\in Y_n}r(i)-\big|Y_n\big|/2\Big|<\frac{1}{2}\cdot \sqrt{n 2^n},\]
which in turns implies that for all $n\in J$, $n\ge n_0$, and $s\in\{0,1\}$ it holds:
\[\Big|\big|\big\{i\in Y_n\colon\ r(i)=s\big\}\big|-\big|Y_n\big|/2\Big|<\frac{1}{2}\cdot \sqrt{n 2^n}.\]
(Just note that the values on the left hand sides of the latter two inequalities are the same.)
As a result, for every $n\in J$, $n\ge n_0$, we have:
\[\big|\mu_n(A)\big|=\big|\mu_n\big(\varphi\big[Y_n\big]\big)\big|=\big|\alpha_n\cdot\sum_{i\in Y_n}(-1)^{r(i)+1}\big|=\]
\[=\alpha_n\cdot\Big|\big|\big\{i\in Y_n\colon\ r(i)=1\big\}\big|-\big|\big\{i\in Y_n\colon\ r(i)=0\big\}\big|\Big|\le\]
\[\le\alpha_n\cdot\Big(\Big|\big|\big\{i\in Y_n\colon\ r(i)=1\big\}\big|-\big|Y_n\big|/2\Big|+\Big|\big|\big\{i\in Y_n\colon\ r(i)=0\big\}\big|-\big|Y_n\big|/2\Big|\Big)<\]
\[<\alpha_n\cdot\Big(\frac{1}{2}\cdot\sqrt{n 2^n}+\frac{1}{2}\cdot \sqrt{n 2^n}\Big)=\alpha_n\cdot\sqrt{n 2^n}=\frac{1}{n\sqrt{2^n}}\cdot\sqrt{n 2^n}=\frac{1}{\sqrt{n}},\]
which yields that
\[\lim_{\substack{n\to\infty\\n\in J}}\mu_n(A)=0.\]

If $J^c$ is finite, then we are immediately done, so assume that it is infinite. Notice that since for the unit element $1_\aA$ of the Boolean algebra $\aA$ and  all $i\io$ we have $1_\aA\in x_i$, exactly the same reasoning as above shows that $\lim_{n\to\infty}\mu_n\big(1_\aA\big)=0$, so in particular we have:
\[\lim_{\substack{n\to\infty\\n\in J^c}}\mu_n\big(1_\aA\big)=0.\]
For each $n\io_+$ define the set $Y_n'$ in $V$ similarly as $Y_n$:
\[Y_n'=\big\{i\in I_n\colon\ 1_\aA\sm A\in x_i\big\},\]
and put:
\[J'=\big\{n\io_+\colon\ \big|Y_n'\big|\big/2^n\ge1/2\big\}.\]
Since $Y_n'=I_n\sm Y_n$, we have:
\[J^c=\big\{n\io_+\colon\ \big|Y_n\big|\big/2^n<1/2\big\}=\big\{n\io_+\colon\ \big|I_n\sm Y_n\big|\big/2^n>1/2\big\}\sub\]
\[\sub\big\{n\io_+\colon\ \big|I_n\sm Y_n\big|\big/2^n\ge1/2\big\}=\big\{n\io_+\colon\ \big|Y_n'\big|\big/2^n\ge1/2\big\}=J',\]
so $J'$ is infinite. Using again the same argument as above, we show that
\[\lim_{\substack{n\to\infty\\n\in J'}}\mu_n\big(1_\aA\sm A\big)=0,\]
so in particular we get that
\[\lim_{\substack{n\to\infty\\n\in J^c}}\mu_n\big(1_\aA\sm A\big)=0.\]
Finally, we have:
\[\lim_{\substack{n\to\infty\\n\in J^c}}\mu_n(A)=\lim_{\substack{n\to\infty\\n\in J^c}}\mu_n\big(1_\aA\big)-\lim_{\substack{n\to\infty\\n\in J^c}}\mu_n\big(1_\aA\sm A\big)=0,\]
which ultimately implies that
\[\lim_{n\to\infty}\mu_n(A)=0.\]

We have just showed that the sequence $\seq{\mu_n}{n\io_+}$ of finitely supported measures on $\aA$ is pointwise null but not uniformly bounded, so, by Lemma \ref{lemma:fs_no_nik_no_gr}, $\aA$ has neither the Nikodym property nor the Grothendieck property. The proof is thus finished.
\end{proof}

Note that, normalizing measures $\mu_n$ from the above proof (that is, considering the measures $\mu_n/\big\|\mu_n\big\|$), by Corollary \ref{cor:pnub_wsn} we obtain the following result.

\begin{corollary}\label{cor:random_jn}
Let $\PP\in V$ be a notion of forcing adding a random real and $\aA\in V$ an infinite Boolean algebra. Assume that $G$ is a $\PP$-generic filter over $V$. Then, in $V[G]$, $St(\aA)$ carries a weak* null sequence $\seqn{\mu_n}$ of finitely supported measures with pairwise disjoint supports such that $\big\|\mu_n\big\|=1$ and $\big|\supp\big(\mu_n\big)\big|=2^n$ for every $n\io$.\noproof
\end{corollary}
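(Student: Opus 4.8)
The plan is to take the finitely supported measures $\seq{\mu_n}{n\io_+}$ constructed in the proof of Theorem~\ref{theorem:random} and normalize them; no new idea is needed beyond that proof and Corollary~\ref{cor:pnub_wsn}. Recall that there, in $V[G]$, one fixes a random real $r\in\Cantor$ over $V$, a one-to-one enumeration $\seqi{x_i}$ of ultrafilters of $\aA$, the map $\varphi(i)=x_i$, and sets $\mu_n=\alpha_n\sum_{i\in I_n}(-1)^{r(i)+1}\delta_{x_i}$ with $\alpha_n=1/(n\sqrt{2^n})$ and $I_n=\{2^n+1,\ldots,2^{n+1}\}$; it is shown there that $\seq{\mu_n}{n\io_+}$ is pointwise null, that $\supp(\mu_n)=\varphi[I_n]$ (so $|\supp(\mu_n)|=|I_n|=2^n$), and that $\|\mu_n\|=\alpha_n 2^n=\sqrt{2^n}/n$.

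First I would put $\nu_n=\mu_n/\|\mu_n\|$ for $n\io_+$; explicitly $\nu_n=2^{-n}\sum_{i\in I_n}(-1)^{r(i)+1}\delta_{x_i}$. Then $\nu_n$ is finitely supported with the same support as $\mu_n$, hence $|\supp(\nu_n)|=2^n$, and $\|\nu_n\|=1$. Moreover, for each $A\iA$ we have $|\nu_n(A)|=|\mu_n(A)|/\|\mu_n\|\to 0$, because $\mu_n(A)\to 0$ by the pointwise nullity established in Theorem~\ref{theorem:random} (and $\|\mu_n\|\to\infty$, so the ratio tends to $0$ a fortiori). Thus $\seq{\nu_n}{n\io_+}$ is a pointwise null, uniformly bounded sequence of finitely supported measures on $\aA$ with $\|\nu_n\|=1$ and $|\supp(\nu_n)|=2^n$.

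Finally I would invoke Corollary~\ref{cor:pnub_wsn}, which says that a pointwise null, uniformly bounded sequence of measures on a Boolean algebra is weak* null; interpreting each $\nu_n$ as a measure on $St(\aA)$ through the canonical extension gives the desired sequence. (One may add a term with $n=0$, e.g.\ $\nu_0=(-1)^{r(2)+1}\delta_{x_2}$, or simply relabel the index set, to have the sequence indexed by all of $\omega$ as in the statement; this changes nothing.) I do not expect any real obstacle: the substantive work---the binomial concentration inequality, the Borel--Cantelli argument, and the resulting pointwise nullity of $\seqn{\mu_n}$---has already been done in the proof of Theorem~\ref{theorem:random}, and what remains is only the elementary bookkeeping of total-variation norms and support cardinalities under normalization.
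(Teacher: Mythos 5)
Your proposal is correct and is exactly the paper's argument: the authors obtain Corollary \ref{cor:random_jn} by normalizing the measures $\mu_n$ from the proof of Theorem \ref{theorem:random} and applying Corollary \ref{cor:pnub_wsn}. The bookkeeping about supports, norms, and reindexing is handled correctly, so there is nothing to add.
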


\subsection{Random reals. Generalization of Dow--Fremlin's result\label{sec:df}}

Let $\kappa$ be an infinite cardinal number. By $\mu_\kappa$ denote the standard product probability measure on the space $2^\kappa$ and let $\B(\kappa)=Bor\big(2^\kappa\big)\big/\big\{A\in Bor\big(2^\kappa\big)\colon\ \mu_\kappa(A)=0\big\}$ be its measure algebra. $\B(\kappa)$ is a well-known $\oo$-bounding poset adding $\kappa$ many random reals (see \cite[Section 3.1]{BJ95})\footnote{Or,  formally, $\B(\kappa)\sm\{0\}$, but for simplicity we will keep writing just $\B(\kappa)$.}. (A forcing $\PP$ is \textit{$\oo$-bounding} if for every $\PP$-generic filter $G$ over $V$ and a function $f\in\oo\cap V[G]$ there is a function $g\in\oo\cap V$ such that $f(n)<g(n)$ for every $n\io$.)

Recall again that Dow and Fremlin \cite{DF07} proved that forcing with $\B(\kappa)$ does not introduce non-trivial convergent sequences to the Stone spaces of $\sigma$-complete ground model Boolean algebras. In this subsection we will generalize their result in the following way.

\begin{theorem}\label{theorem:df_gen}
Let $\aA\in V$ be an infinite $\sigma$-complete Boolean algebra. Assume that $G$ is a $\B(\kappa)$-generic filter over $V$. Then, in $V[G]$, $St(\aA)$ does not carry any weak* null sequence $\seqn{\mu_n}$ of finitely supported measures such that $\big\|\mu_n\big\|=1$ for every $n\io$ and for which there exists $M\io$ such that $\big|\supp\big(\mu_n\big)\big|\le M$ for every $n\io$. 
\end{theorem}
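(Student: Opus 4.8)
The plan is to work in $V[G]$, assume toward a contradiction that such a sequence $\seqn{\mu_n}$ exists, reflect its (uniformly finite) supports down into $V$ using that $\B(\kappa)$ is ccc and $\oo$-bounding, and then use the $\sigma$-completeness of $\aA$ together with the measure-algebra structure of $\B(\kappa)$ to manufacture a single element $A\in\aA$ with $\limsup_{n}\big|\mu_n(A)\big|>0$, contradicting weak* nullity.

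First the routine reductions. Suppose $\seqn{\mu_n}$ in $V[G]$ is weak* null with $\big\|\mu_n\big\|=1$ and $\big|\supp(\mu_n)\big|\le M$ for all $n\io$. Passing to a subsequence and using the pigeonhole principle, assume $\big|\supp(\mu_n)\big|=K$ for a fixed $K\le M$, write $\mu_n=\sum_{j<K}\alpha_{n,j}\delta_{x_{n,j}}$ with the $x_{n,j}\in St(\aA)$ pairwise distinct and $\sum_{j<K}\big|\alpha_{n,j}\big|=1$, and fix (along a further subsequence) $j^*<K$ with $\big|\alpha_{n,j^*}\big|\ge 1/K$ for every $n$. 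For each $n$ and $j\neq j^*$ pick $B_{n,j}\in\aA$ with $x_{n,j^*}\in[B_{n,j}]_\aA$ and $x_{n,j}\notin[B_{n,j}]_\aA$, and put $A_n=\bigwedge_{j\neq j^*}B_{n,j}\in\aA$; then $x_{n,j^*}\in[A_n]_\aA$ while $x_{n,j}\notin[A_n]_\aA$ for $j\neq j^*$, so $\mu_n(A_n)=\alpha_{n,j^*}$ and hence $\big|\mu_n(A_n)\big|\ge 1/K$. Since $\seqn{A_n}\in V[G]$ takes values in the ground-model set $\aA$ and $\B(\kappa)$ is ccc, there is a sequence $\seqn{\fF_n}\in V$ with $\fF_n\in\ctblsub{\aA}$ and $A_n\in\fF_n$; since $\B(\kappa)$ is $\oo$-bounding, we may shrink each $\fF_n$ to a finite set $G_n\in\finsub{\aA}$ in $V$ with $A_n\in G_n$. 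This is the one place where the uniform bound $M$ on the supports is essential: it turns each $\mu_n$ into a single separating element $A_n$ to be reflected, whereas for unbounded supports the analogous separating sets need not exist and Theorem \ref{theorem:random} shows the conclusion genuinely fails. Finally, replacing $\aA$ by the $\sigma$-complete subalgebra $\bB\le\aA$ generated in $V$ by the countable set $\bigcup_{n\io}G_n$, and $\mu_n$ by its pushforward $r_*\mu_n$ along the continuous surjection $r\colon St(\aA)\to St(\bB)$ dual to $\bB\hookrightarrow\aA$ --- which is still finitely supported with supports of size $\le K$, still weak* null (by Corollary \ref{cor:pnub_wsn}, being uniformly bounded and pointwise null on $\bB$), and still satisfies $\big\|r_*\mu_n\big\|\ge\big|r_*\mu_n(A_n)\big|\ge 1/K$ --- we may assume, after normalising, that $\aA$ is a countably generated $\sigma$-complete Boolean algebra.

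With $\aA$ countably generated and $\sigma$-complete, $St(\aA)$ is a compact F-space and the relevant data (the algebra, the elements $A_n$, the measures $\mu_n$) is essentially coded by reals, so we are in the setting of adding a single random real over a fixed real-coded structure. Here the plan is to run the Dow--Fremlin analysis \cite{DF07} of such extensions --- which shows that the random real introduces no non-trivial convergent sequence to $St(\aA)$ --- but to carry it out for the bounded-support measures $\mu_n$ rather than for a single convergent sequence: one reflects, as above, the finite pools $G_n$ of candidate separating clopen sets, and uses $\sigma$-completeness in $V$ (to form suitable infinite suprema of selections from the $G_n$'s) together with the $\oo$-bounding and measure properties of $\B(\kappa)$ (to see that some such ground-model supremum $A\in\aA$ is hit with mass at least $1/K$ in absolute value by infinitely many of the $\mu_n$'s, contradicting weak* nullity). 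Equivalently, one shows that after a further thinning the countable set $S=\bigcup_{n\io}\supp(\mu_n)$ is $C^*$-embedded in $St(\aA)$; then weak* nullity of $\seqn{\mu_n}$ forces $\seqn{\mu_n}$ to be weakly null in the copy of $\ell_1(S)$ sitting inside $M(St(\aA))=C(St(\aA))^*$, and the Schur property of $\ell_1(S)$ yields $\big\|\mu_n\big\|\to 0$, contradicting $\big\|\mu_n\big\|=1$.

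The main obstacle is exactly this last step. The reflection and normalisation are bookkeeping, and the $\ell_1$/Schur conclusion is formal once the $C^*$-embedding (equivalently, the single absorbing element $A$) is in hand; the genuine difficulty, and the only place where "$\aA$ is $\sigma$-complete" is used essentially, is in extracting from the $\oo$-bounding and measure-algebra structure of $\B(\kappa)$ enough control over the \emph{new} ultrafilters $x_{n,j}$ appearing in the supports --- precisely the point Dow and Fremlin overcome for convergent sequences. One expects their techniques to transfer, the novelty being only the passage from a single converging sequence of points to a weak* null sequence of measures with boundedly many support points.
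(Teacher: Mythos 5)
Your reductions are reasonable bookkeeping, but the proof stops exactly where the theorem begins. The entire mathematical content of this result is the construction, in $V$, of a \emph{single} element of $\aA$ on which infinitely many of the $\mu_n$ have mass bounded away from $0$, and your proposal replaces that construction with ``one expects the Dow--Fremlin techniques to transfer.'' They do not transfer for free: Dow--Fremlin's argument is built around a single converging sequence of points, and the paper has to add two genuinely new ingredients to make it work for measures. First, a quantitative strengthening of a lemma of Borodulin-Nadzieja and the first author (Lemma \ref{lemma:bns}): whenever a condition $q$ forces two name-ultrafilters to differ, one can find $C\in\aA$ and a condition $p\le q$ of measure $>\mu_\kappa(q)/4-\eps$ forcing $C$ to separate them. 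It is this measure lower bound that allows the separating conditions $r_n$ to be amalgamated into a single condition $s=\bigwedge_n\bigvee_{k\ge n}r_k$ of positive measure, after which $\sigma$-completeness gives the supremum $C=\bigvee_n C_n$ that violates weak* nullity. Second, a case analysis on the names $\dot F_n$ of indices where the supports are already separated by the disjoint family $\seqn{A_n}$ coming from Lemma \ref{lemma:df_lem}; when infinitely many $\dot F_n$ are nonempty one needs the $\oo$-bounding property plus an uncountable almost disjoint family in $V$ (Lemma \ref{lemma:oobounding_ad}) to find a ground-model supremum $A_Z=\bigvee_{n\in Z}A_{h(n)}$ whose boundary misses the relevant support points. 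None of this is visible in, or recoverable from, your outline.

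Two further concrete problems. (i) Your fallback formulation --- show that $S=\bigcup_n\supp(\mu_n)$ is $C^*$-embedded in $St(\aA)$ and then invoke the Schur property of $\ell_1(S)$ --- asks for far more than the theorem needs and is almost certainly false: the new ultrafilters in the supports need not be separated by ground-model clopen sets at all (that is precisely what condition (c) in the paper's proof encodes), so $\ell_\infty(S)$ will not in general be realized by restrictions of continuous functions. What one actually needs, and what the paper proves, is the much weaker statement that \emph{one} clopen set $\clopen{C}_\aA$ separates $x_i$ from $y_i$ for infinitely many $i$. (ii) Your first reduction records only that each $\mu_n$ gives mass $\ge 1/K$ to some varying $A_n\in\aA$; this carries no contradiction with weak* nullity and discards the structure that the paper's reduction (Lemma \ref{lemma:reduction_m_2} and Remark \ref{remark:reduction_m_2}) deliberately keeps, namely a pair of point sequences $\seqn{x_n},\seqn{y_n}$ that ground-model clopen sets eventually fail to separate. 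That asymptotic-inseparability statement is the precise target the rest of the argument contradicts, and without it you have nothing to contradict.
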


The theorem really generalizes the result of Dow and Fremlin, since if a compact space $K$ contains a non-trivial convergent sequence $\seqn{x_n}$, then the sequence $\seqn{\mu_n}$ of measures defined as $\mu_n=\frac{1}{2}\big(\delta_{x_{2n}}-\delta_{x_{2n+1}}\big)$ is weak* null and such that $\big\|\mu_n\big\|=1$ and $\big|\supp\big(\mu_n\big)\big|=2$ for every $n\io$. Note however that the existence of a weak* null sequence $\seqn{\nu_n}$ of measures on a totally disconnected compact space $K$ such that $\big|\supp\big(\nu_n\big)\big|=2$ for every $n\io$ does not imply the existence of non-trivial convergent sequences in $K$ (cf. \cite[Example 4.10]{Sch82}).

Let us stress that Corollary \ref{cor:random_jn} and Theorem \ref{theorem:df_gen} are complementary: the corollary states that the existence of a random real yields the existence of a weak* null sequence $\seqn{\mu_n}$ of finitely supported normalized measures on the Stone space of a given infinite ground model Boolean algebra such that $\lim_{n\to\infty}\big|\supp\big(\mu_n\big)\big|=\infty$, while the theorem asserts that, in the case of $\sigma$-complete Boolean algebras, this is optimal---we cannot get any weak* null sequence $\seqn{\mu_n}$ of normalized measures such that $\sup_{n\io}\big|\supp\big(\mu_n\big)\big|<\infty$.

In order to prove Theorem \ref{theorem:df_gen}, we first need to recall several auxiliary results. The first one implies that in fact we only need to deal with the case of $M=2$. 

\begin{lemma}\label{lemma:reduction_m_2}
Let $\aA$ be an infinite Boolean algebra. If there are a weak* null sequence $\seqn{\mu_n}$ of finitely supported measures on $St(\aA)$ and a number $M\io$ such that $\big\|\mu_n\big\|=1$ and $\big|\supp\big(\mu_n\big)\big|\le M$ for every $n\io$, then $M\ge2$ and there is a weak* null sequence $\seqn{\nu_n}$ of finitely supported measures on $St(\aA)$ such that $\big\|\nu_n\big\|=1$ and $\big|\supp\big(\nu_n\big)\big|=2$.
\end{lemma}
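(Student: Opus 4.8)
The plan is to argue by induction on $M$, peeling off the support points one at a time. As induction hypothesis I take the statement of the lemma with $M$ replaced by any smaller bound, and I prove: given a weak* null sequence $\seqn{\mu_n}$ of finitely supported measures on $St(\aA)$ with $\|\mu_n\|=1$ and $|\supp(\mu_n)|\le M$, either one can extract from it a weak* null, norm-one sequence whose supports have size at most $M-1$ (to which the induction hypothesis applies, producing the desired $\seqn{\nu_n}$), or one writes down a suitable $2$-point sequence $\seqn{\nu_n}$ directly. For the base case, observe that since $\seqn{\mu_n}$ is weak* null we have $\mu_n(1_\aA)\to0$, so no infinite subsequence of $\seqn{\mu_n}$ can consist of measures of the form $\pm\delta_x$ (such a measure has $|\mu_n(1_\aA)|=1$); hence for $M=2$ all but finitely many $\mu_n$ have two-point support (and for $M\le1$ there is no such sequence at all), and after discarding finitely many terms $\seqn{\mu_n}$ itself is the required sequence.

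For the inductive step fix $M\ge3$. If $|\supp(\mu_n)|\le M-1$ for infinitely many $n$, pass to that subsequence; otherwise reindex so that $|\supp(\mu_n)|=M$ for every $n$ and write $\mu_n=\sum_{i=1}^M\alpha^n_i\delta_{x^n_i}$ with the $x^n_i$ pairwise distinct and $\sum_i|\alpha^n_i|=1$. If $\liminf_n|\alpha^n_{i_0}|=0$ for some coordinate $i_0$, pass to a subsequence along which $\alpha^n_{i_0}\to0$; then $\mu'_n:=\mu_n-\alpha^n_{i_0}\delta_{x^n_{i_0}}$ is still weak* null (because $\|\alpha^n_{i_0}\delta_{x^n_{i_0}}\|=|\alpha^n_{i_0}|\to0$), is supported on at most $M-1$ points, and $\|\mu'_n\|=1-|\alpha^n_{i_0}|\to1$, so the normalized measures $\mu'_n/\|\mu'_n\|$ form a sequence to which the induction hypothesis applies. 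Hence we may assume there is $c>0$ with $|\alpha^n_i|\ge c$ for all $i$ and all large $n$ (take $c$ to be half the minimum of the finitely many positive numbers $\liminf_n|\alpha^n_i|$).

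The key device now is that for every $A\in\aA$ the measure $\mu^A_n\colon B\mapsto\mu_n(B\wedge A)$ on $\aA$ is \emph{again} weak* null, since its action on $C(St(\aA))$ is $f\mapsto\mu_n\bigl(f\cdot\chi_{[A]_\aA}\bigr)$ and $\chi_{[A]_\aA}$ is continuous; moreover $\mu^A_n$ is supported on $\supp(\mu_n)\cap[A]_\aA$, with norm $\ge c$ whenever that intersection is nonempty. Split into two subcases. In subcase (a) there are an infinite $N'\sub\omega$ and an $A\in\aA$ such that $\supp(\mu_n)$ meets both $[A]_\aA$ and $St(\aA)\sm[A]_\aA$ for every $n\in N'$; then along $N'$ the measures $\mu^A_n$ and $\mu_n-\mu^A_n$ are weak* null, have norm in $[c,1]$, and are supported on proper nonempty subsets of $\supp(\mu_n)$, hence on at most $M-1$ points, so normalizing $\mu^A_n$ and passing to a sub-subsequence of $N'$ on which the support size is constant gives a sequence to which the induction hypothesis applies. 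In subcase (b), (a) fails, which means precisely that for every $A\in\aA$ the set $\supp(\mu_n)$ is contained in $[A]_\aA$ or in $St(\aA)\sm[A]_\aA$ for all but finitely many $n$; in particular $\chi_{[A]_\aA}(x^n_1)=\chi_{[A]_\aA}(x^n_2)$ for all large $n$, for each $A\in\aA$. Then $\nu_n:=\tfrac12\bigl(\delta_{x^n_1}-\delta_{x^n_2}\bigr)$ satisfies $\|\nu_n\|=1$, $|\supp(\nu_n)|=2$, and $\nu_n(A)\to0$ for every $A\in\aA$; being uniformly bounded and pointwise null on $\aA$, the sequence $\seqn{\nu_n}$ is weak* null by Corollary~\ref{cor:pnub_wsn}, which closes this branch.

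The point that shapes the whole argument is that $St(\aA)$ need not be metrizable, so the naive route --- pass to a subsequence along which every $x^n_i$ converges in $St(\aA)$ and then merge two coordinates sharing a limit and carrying opposite-sign weights --- is not available. Replacing that limiting step by the clopen restrictions $\mu^A_n$, which are weak* null for free because clopen indicator functions are continuous, together with the purely combinatorial dichotomy between subcases (a) and (b), is exactly what circumvents the obstacle; the surrounding manipulation of subsequences and reindexings is routine once this device is in place.
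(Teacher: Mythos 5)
Your proof is correct and takes essentially the same route as the paper's: the paper phrases the descent on the support size as a minimality argument on $M$ rather than an induction, but the core device --- restricting the measures to clopen sets (weak* nullness is preserved because clopen indicators are continuous), the dichotomy between some clopen splitting infinitely many supports and eventual containment/avoidance for every clopen, and the final appeal to Corollary \ref{cor:pnub_wsn} for the two-point differences --- is identical. The only cosmetic difference is how normalization of the restricted measures is justified: you first secure a uniform lower bound on the atom weights, whereas the paper observes that at least one of the two complementary restrictions has norm bounded away from zero along a subsequence.
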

\begin{proof}
Let $\seqn{\mu_n}$ be a weak* null sequence of finitely supported measures on $St(\aA)$ for which there exists $M\io$ such that $\big\|\mu_n\big\|=1$ and $\big|\supp\big(\mu_n\big)\big|\le M$ for every $n\io$. If there is a subsequence $\seqk{\mu_{n_k}}$ such that $\big|\supp\big(\mu_{n_k}\big)\big|=1$ for every $k\io$, then every $\mu_{n_k}$ is simply of the form $\alpha_k\cdot\delta_{x_k}$ for some $\alpha_k\in\{-1,1\}$ and $x_k\in St(\aA)$. Consequently, for the constant unit function $1\in C(St(\aA))$ we have $\big|\mu_{n_k}(1)\big|=\big|\alpha_k\big|=1$ for every $k\io$, which contradicts the fact that $\seqn{\mu_n}$ converges weak* to $0$. It follows that for almost all $n\io$ we have $\big|\supp\big(\mu_n\big)\big|\ge2$ and so $M\ge 2$.

\medskip

We now prove the second part of the lemma. Let $m\io$ be the minimal number such that there exists a weak* null sequence $\seqn{\mu_n}$ of finitely supported measures on $St(\aA)$ such that $\big\|\mu_n\big\|=1$ and $\big|\supp\big(\mu_n\big)\big|=m$ for every $n\io$. By the previous paragraph, $m\ge2$. We will prove that in fact $m=2$.

First note that if there are a clopen set $U\sub St(\aA)$ and an increasing sequence $\seqk{n_k}$ such that $\mu_{n_k}\rstr U\neq 0$ and $\mu_{n_k}\rstr(St(\aA)\sm U)\neq 0$ for every $k\io$, then there is an increasing sequence $\seql{k_l}$ such that at least one of the sequences $\seql{\mu_l^1}$ and $\seql{\mu_l^2}$, defined for every $l\io$ as
\[\mu_l^1=\big(\mu_{n_{k_l}}\rstr(St(\aA)\sm U)\big)\big/\big\|\mu_{n_{k_l}}\rstr(St(\aA)\sm U)\big\|\]
and
\[\mu_l^2=\big(\mu_{n_{k_l}}\rstr U\big)\big/\big\|\mu_{n_{k_l}}\rstr U\big\|,\]
is weak* null. Since for every $l\io$ and $i\in\{1,2\}$ it holds $\big\|\mu_l^i\big\|=1$ and $\big|\supp\big(\mu_l^i\big)\big|<m$, we get a contradiction with the minimality of $m$. It follows that for every clopen $U\sub St(\aA)$ and almost all $n\io$ we have either $\supp\big(\mu_n\big)\sub U$, or $\supp\big(\mu_n\big)\cap U=\emptyset$. 

For every $n\io$ pick two distinct points $x_n,y_n\in\supp\big(\mu_n\big)$ and define the measure $\nu_n$ simply as follows $\nu_n=\frac{1}{2}\big(\delta_{x_n}-\delta_{y_n}\big)$. Of course, $\big\|\nu_n\big\|=1$. To finish the proof, we only need to show that $\seqn{\nu_n}$ is weak* null. But this is trivial, since for every clopen subset $U$ of $St(\aA)$ and almost all $n\io$ we have either $x_n,y_n\in U$, or $x_n,y_n\not\in U$; in either case it holds $\nu_n(U)=0$, so $\seqn{\nu_n}$ is pointwise null. Since $\seqn{\nu_n}$ is also uniformly bounded, by Corollary \ref{cor:pnub_wsn} it is weak* null (and so, by the minimality of $m$, we also have $m=2$).
\end{proof}

\begin{remark}\label{remark:reduction_m_2}
For a Boolean algebra $\aA$, if there exists a weak* null sequence $\seqn{\mu_n}$ of measures on $St(\aA)$ such that $\big\|\mu_n\big\|=1$ and $\big|\supp\big(\mu_n\big)\big|=2$ for every $n\io$, then one can also easily get such a sequence but with pairwise disjoint supports. Thus, from the above proof it basically follows that for every Boolean algebra $\aA$ the following two conditions are equivalent:
\begin{enumerate}
	\item there are a weak* null sequence $\seqn{\mu_n}$ of finitely supported measures on $St(\aA)$ and $M\io$ such that $\big\|\mu_n\big\|=1$ and $\big|\supp\big(\mu_n\big)\big|\le M$ for every $n\io$,
	\item there are two disjoint sequences $\seqn{x_n}$ and $\seqn{y_n}$ of distinct points in $St(\aA)$ such that for every clopen set $U$ and almost all $n\io$ we have: $x_n\in U$ if and only if $y_n\in U$.
\end{enumerate}
\end{remark}

The following lemma was the main tool used by Dow and Fremlin to obtain their result. We will need it, too.

\begin{lemma}[Dow--Fremlin {\cite[Lemma 2.2]{DF07}}]\label{lemma:df_lem}
Let $\aA\in V$ be a Boolean algebra. Assume that $\seqn{\dot{x}_n}$ is a sequence of $\B(\kappa)$-names for distinct ultrafilters on $\aA$. Let $G$ be a $\B(\kappa)$-generic filter over $V$. Then, for every condition $q\in\B(\kappa)$ there are a condition $p\le q$ and a sequence $\seqn{A_n}\in V$ of pairwise disjoint elements of $\aA$ such that $p\forces \clopen{A_n}_\aA\cap\big\{\dot{x}_k\colon k\io\big\}\neq\emptyset$ for every $n\io$.\noproof
\end{lemma}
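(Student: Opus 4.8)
The plan is to reformulate the statement in terms of Boolean values and then to build the disjoint sequence $\seqn{A_n}$ together with a decreasing sequence of conditions by a recursion governed by a measure budget. For each $k\io$ define $h_k\colon\aA\to\B(\kappa)$ by $h_k(A)=\big[\!\big[A\in\dot{x}_k\big]\!\big]$, the Boolean value of the event ``$A$ belongs to $\dot{x}_k$''. Since each $\dot{x}_k$ is forced to be an ultrafilter on $\aA$, every $h_k$ is a Boolean homomorphism, and since the $\dot{x}_k$ are forced pairwise distinct we have $h_k\neq h_j$ for $k\neq j$. As $\B(\kappa)$ is a complete Boolean algebra, for $A\iA$ the supremum $\bigvee_{k\io}h_k(A)$ exists and equals $\big[\!\big[\clopen{A}_\aA\cap\{\dot{x}_k\colon k\io\}\neq\emptyset\big]\!\big]$. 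Hence the conclusion ``$p\forces\clopen{A_n}_\aA\cap\{\dot{x}_k\colon k\io\}\neq\emptyset$ for every $n$'' is equivalent to producing pairwise disjoint $A_n\iA$ and $p\le q$ with $p\le\bigvee_{k\io}h_k(A_n)$ for every $n\io$.

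\emph{The recursion.} Fix $q$ and positive reals $\eps_n$ with $\sum_{n\io}\eps_n<\mu_\kappa(q)$. I would construct a decreasing sequence $q=p_0\ge p_1\ge\cdots$ of conditions and pairwise disjoint $A_n\iA$, writing $C_n=1_\aA\sm(A_0\vee\cdots\vee A_{n-1})$ and $C_{n+1}=C_n\sm A_n$ (so $A_n\le C_n$ forces pairwise disjointness), while maintaining: (i) $\mu_\kappa(p_{n+1})\ge\mu_\kappa(p_n)-\eps_n$; and (ii) $p_n\forces\big|\{k\io\colon C_n\in\dot{x}_k\}\big|=\omega$, i.e.\ the reservoir $C_n$ still carries infinitely many of the points. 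The base case is immediate, since $C_0=1_\aA$ lies in every ultrafilter, so $p_0=q$ forces the reservoir to be all of $\omega$. At the end I set $p=\bigwedge_{n\io}p_n$; by (i) and the choice of the $\eps_n$ one has $\mu_\kappa(p)\ge\mu_\kappa(q)-\sum_n\eps_n>0$, so $p$ is a condition, and because each requirement $p_{n+1}\le\bigvee_kh_k(A_n)$ secured in the one-step is upward absolute, $p\le p_{n+1}\le\bigvee_kh_k(A_n)$ for all $n$, which is the desired conclusion.

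\emph{The one-step.} Given $p_n$ and $C_n$ satisfying (ii), invariant (ii) forces the existence of two distinct indices $k<j$ with $C_n\in\dot{x}_k\cap\dot{x}_j$; as $\dot{x}_k\neq\dot{x}_j$, separating these ultrafilters inside $C_n$ yields a name $\dot D$ for an element of $\aA$ with $\dot D\le C_n$, $\dot D\in\dot{x}_k$ and $C_n\sm\dot D\in\dot{x}_j$. Since $\B(\kappa)$ is ccc, there is a countable maximal antichain $\{p_n^i\colon i\io\}$ below $p_n$ deciding $\dot D$ as a ground-model value $d^i\iA$ (with $d^i\le C_n$) and deciding which of $\clopen{d^i}_\aA$, $\clopen{C_n\sm d^i}_\aA$ carries infinitely many points; let $A_n^i$ be the side whose complement in $C_n$ still carries infinitely many points, so that on $p_n^i$ the set $\clopen{A_n^i}_\aA$ captures at least one point while $\clopen{C_n\sm A_n^i}_\aA$ remains infinite. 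A useful feature is that capture is upward absolute: if $A_n\ge A_n^i$, then $A_n$ belongs to the same witnessing ultrafilter, so any $A_n\le C_n$ dominating the relevant $A_n^i$ on a piece is captured there. Choosing finitely many pieces $p_n^0,\dots,p_n^m$ of total measure $>\mu_\kappa(p_n)-\eps_n$, I would then select a single $A_n\le C_n$ capturing a point on each of these pieces and leaving infinitely many points in $C_n\sm A_n$ on each, and finally set $p_{n+1}=p_n\wedge\bigvee_kh_k(A_n)\wedge\big[\!\big[\,|\{k\io\colon C_n\sm A_n\in\dot{x}_k\}|=\omega\,\big]\!\big]$, which secures (i), the capture requirement, and invariant (ii) at stage $n+1$.

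\emph{The main obstacle.} The delicate point is precisely the selection in the one-step of one ground-model $A_n$ that works across all relevant pieces at once: enlarging $A_n$ (say to a union of the $A_n^i$) makes capture automatic by upward absoluteness but threatens to swallow the reservoir points of other pieces, whereas shrinking it protects the reservoir but can destroy capture. This is where the $\oo$-bounding property of $\B(\kappa)$ is essential: it allows one to replace, at each node, the a priori generic choice of separator by membership in a \emph{finite} ground-model set of candidates, turning the search into a finitely branching combinatorial problem in $V$ on which the competing capture and reservoir demands can be balanced. Carrying out this balancing uniformly—so that a single $A_n$ meets both demands on a set of conditions of measure close to $\mu_\kappa(p_n)$—is the technical heart of the matter and is exactly the content of Dow and Fremlin's Lemma~2.2; I expect this uniformization, rather than the surrounding recursion or the measure bookkeeping, to be the hardest part.
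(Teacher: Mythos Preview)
The paper does not prove this lemma at all: it is stated with a citation to Dow and Fremlin and closed immediately with \noproof. So there is no proof in the paper to compare your proposal against.

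As for the proposal itself, you have set up a sensible recursion-with-measure-budget framework, and the reformulation in terms of the homomorphisms $h_k$ and the supremum $\bigvee_k h_k(A)$ is correct. The decreasing sequence $p_n$ with $\mu_\kappa(p_{n+1})\ge\mu_\kappa(p_n)-\eps_n$ and the reservoir invariant (ii) are the right bookkeeping devices, and taking $p=\bigwedge_n p_n$ at the end is fine.

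However, your proposal has a genuine gap, and you identify it yourself: in the one-step you need a \emph{single} ground-model $A_n\le C_n$ that simultaneously captures a point on each of the finitely many pieces $p_n^0,\dots,p_n^m$ while leaving infinitely many reservoir points on each. You note correctly that enlarging $A_n$ preserves capture but may destroy the reservoir, and shrinking it does the opposite; you then say that resolving this tension ``is exactly the content of Dow and Fremlin's Lemma~2.2''. That is circular: Lemma~2.2 \emph{is} the statement you are trying to prove, so deferring the hard step back to it leaves the argument incomplete. The invocation of $\oo$-bounding to reduce to finitely many candidate separators is a plausible ingredient, but you have not shown how to actually carry out the finite combinatorial balancing in $V$, and that is where all the work lies. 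What you have written is an accurate scaffold around the missing core, not a proof.
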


In \cite{BNS21} Borodulin-Nadzieja and the first author proved that, for every $\B(\kappa)$-names $\dot{u}$ and $\dot{v}$ for ultrafilters on a given ground model Boolean algebra $\aA$, if $\forces_{\B(\kappa)}\dot{u}\neq\dot{v}$, then for every $\eps>0$ there are a condition $p\in\B(\kappa)$ and an element $C\in\aA$ such that $\mu_\kappa(p)>1/4-\eps$ and $p\forces C\in\dot{u}\triangle\dot{v}$. By exactly the same proof, \textit{mutatis mutandis}, we obtain the following formally stronger result.

\begin{lemma}[Cf. {\cite[Section 6.2]{BNS21}}]\label{lemma:bns}
Let $\aA\in V$ be a Boolean algebra. Let $\dot{u}$ and $\dot{v}$ be $\B(\kappa)$-names for ultrafilters on $\aA$. For every condition $q\in\B(\kappa)$, if $q\forces\dot{u}\neq\dot{v}$, then for every $\eps>0$ there are a condition $p\le q$ and an element $C\in\aA$ such that $\mu_\kappa(p)>\mu_\kappa(q)/4-\eps$ and $p\forces C\in\dot{u}\triangle\dot{v}$.\noproof
\end{lemma}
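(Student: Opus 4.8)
The plan is to replay the proof of the version of this lemma established in \cite[Section~6.2]{BNS21} --- which is the present statement with $q$ equal to the unit of $\B(\kappa)$ --- carrying the parameter $q$ through every estimate. Fix $\eps>0$; we may clearly assume $\eps<\mu_\kappa(q)/4$. For $C\in\aA$ let $a_C$ (respectively $a'_C$) denote the Boolean value in $\B(\kappa)$ of the statement $C\in\dot u$ (respectively $C\in\dot v$), intersected with $q$; then $a_C,a'_C\le q$, and since $\dot u$ and $\dot v$ are names for ultrafilters, $C\mapsto a_C$ and $C\mapsto a'_C$ are Boolean homomorphisms from $\aA$ into $\B(\kappa)\rstr q$. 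Two observations set the stage. First, any two distinct ultrafilters on $\aA$ disagree on some element, so $q\forces\dot u\neq\dot v$ yields $\bigvee_{C\in\aA}(a_C\triangle a'_C)=q$. Second, for a finite subalgebra $\aA_0\le\aA$ the element $b_{\aA_0}:=\bigvee_{C\in\aA_0}(a_C\triangle a'_C)$ is exactly the Boolean value, below $q$, of the statement ``$\dot u$ and $\dot v$ disagree somewhere on $\aA_0$''; and as $\aA_0$ ranges over the finite subalgebras of $\aA$ --- which are upward directed with union $\aA$ --- the elements $b_{\aA_0}$ are directed with supremum $q$.

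The first step is a quantitative ccc-approximation. Since $\B(\kappa)$ is ccc and $\mu_\kappa$ is $\sigma$-additive, the supremum of any directed family in $\B(\kappa)$ is approximated in $\mu_\kappa$-measure by its members, so I may fix a \emph{finite} subalgebra $\aA_0\le\aA$ with $\mu_\kappa(b_{\aA_0})>\mu_\kappa(q)-\eps$. This is the one place where the relativised argument genuinely differs from \cite{BNS21}: there one approximates the unit $1$ up to $\eps$, here one approximates $\mu_\kappa(q)$ up to $\eps$.

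The second step is the combinatorial core, an averaging over the atoms of $\aA_0$. Let $e_1,\dots,e_k$ enumerate the atoms of $\aA_0$ and put $P_i=a_{e_i}$, $Q_j=a'_{e_j}$. Because $\dot u,\dot v$ are names for ultrafilters and $\bigvee_{i}e_i=1_\aA$, both $\{P_i\colon i\le k\}$ and $\{Q_j\colon j\le k\}$ are partitions of $q$ modulo $\mu_\kappa$-null sets, so the cells $P_i\wedge Q_j$ are pairwise disjoint and $b_{\aA_0}$ is the union of those cells with $i\neq j$ (the event that $\dot u$ and $\dot v$ choose different atoms of $\aA_0$). For $S\sub\{1,\dots,k\}$ put $C_S=\bigvee_{i\in S}e_i\in\aA$; then
\[
p_S:=a_{C_S}\wedge\big(q\sm a'_{C_S}\big)=\bigvee_{i\in S,\ j\notin S}(P_i\wedge Q_j),
\]
so $p_S\le q$ and $p_S\forces C_S\in\dot u$ and $C_S\notin\dot v$, in particular $p_S\forces C_S\in\dot u\triangle\dot v$. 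For each ordered pair $(i,j)$ with $i\neq j$ the condition ``$i\in S$ and $j\notin S$'' holds for exactly $2^{k-2}$ of the $2^{k}$ subsets $S$, so averaging yields
\[
\frac{1}{2^{k}}\sum_{S\sub\{1,\dots,k\}}\mu_\kappa(p_S)=\frac14\sum_{i\neq j}\mu_\kappa(P_i\wedge Q_j)=\frac14\,\mu_\kappa(b_{\aA_0})>\frac{\mu_\kappa(q)}{4}-\frac{\eps}{4}>\frac{\mu_\kappa(q)}{4}-\eps.
\]
Hence some $S$ satisfies $\mu_\kappa(p_S)>\mu_\kappa(q)/4-\eps$ (and $p_S\neq 0$, by our assumption on $\eps$), and $p:=p_S$ together with $C:=C_S$ are as required.

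I do not expect a real obstacle: the argument is essentially bookkeeping on top of \cite{BNS21}. The only point that needs a little care is making the ccc/$\sigma$-additivity approximation of the first step quantitative in $\mu_\kappa(q)$ rather than in $1$. One could instead note that $\B(\kappa)\rstr q$ with the renormalised measure $\mu_\kappa/\mu_\kappa(q)$ is again isomorphic to a probability measure algebra and invoke \cite{BNS21} verbatim, but replaying the proof as above avoids having to justify that isomorphism.
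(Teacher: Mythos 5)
Your proposal is correct and is exactly what the paper intends: the paper gives no proof of Lemma \ref{lemma:bns}, stating only that it follows from the argument of \cite[Section 6.2]{BNS21} ``by exactly the same proof, \emph{mutatis mutandis},'' and your write-up is precisely that relativisation to $q$ (the ccc/$\sigma$-additivity approximation of $\mu_\kappa(q)$ by a finite subalgebra, followed by the averaging over subsets of the atoms that produces the constant $1/4$). All the steps check out, including the harmless reduction to $\eps<\mu_\kappa(q)/4$ and the fact that the chosen finite subalgebra necessarily has at least two atoms.
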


The next lemma is folklore.

\begin{lemma}\label{lemma:oobounding_ad}
Let $\PP$ be an $\oo$-bounding forcing notion and $G$ a $\PP$-generic filter over $V$. Let $X\in\cso\cap V[G]$. Then, in $V$, there is an uncountable almost disjoint family $\hH\sub\cso$ such that for every $Z\in\hH$ the intersection $X\cap Z$ is infinite.
\end{lemma}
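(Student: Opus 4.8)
The plan is to use the $\oo$-bounding property of $\PP$ to capture, inside $V$, the rate at which the new set $X$ fills up initial segments of $\omega$, and then to spread a fixed ground-model almost disjoint family along a sufficiently coarse ground-model partition of $\omega$ into finite blocks.

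First I would let $e\colon\omega\to\omega$ be the increasing enumeration of $X$, so that $e\in\oo\cap V[G]$. Applying $\oo$-boundedness to $e$ yields $g_0\in\oo\cap V$ with $e(n)<g_0(n)$ for every $n\io$, and replacing $g_0$ by $g(n)=1+\max\{g_0(k)\colon k\le n\}+n$ I may assume that $g\in V$ is strictly increasing and satisfies $g(n)>n$ and $e(n)<g(n)$ for all $n\io$. The relevant consequence is the estimate $|X\cap g(n)|\ge n+1$, which holds because the $n+1$ elements $e(0)<\cdots<e(n)$ of $X$ all lie below $e(n)<g(n)$.

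Next, working in $V$, I would define a partition of $\omega$ into consecutive finite intervals by setting $a_0=0$ and $a_{k+1}=g(a_k)$; since $g(n)>n$, the sequence $\seqk{a_k}$ is strictly increasing, so the blocks $J_k=[a_k,a_{k+1})$, $k\io$, cover $\omega$. The key point, a one-line count, is that $X\cap J_k\neq\emptyset$ for every $k\io$: we have $|X\cap a_k|\le a_k$ trivially, while $|X\cap a_{k+1}|=|X\cap g(a_k)|\ge a_k+1$ by the estimate above, so some element of $X$ must land in $a_{k+1}\sm a_k=J_k$.

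Finally, I would fix in $V$ any almost disjoint family $\{B_\alpha\colon\alpha<\omega_1\}\sub\cso$ (indexed by the set $\omega$ of blocks) and put $Z_\alpha=\bigcup_{k\in B_\alpha}J_k$; the family $\hH=\{Z_\alpha\colon\alpha<\omega_1\}$ then lies in $V$ and is as required. Each $Z_\alpha$ is infinite, being a union of infinitely many nonempty finite blocks; for $\alpha\neq\beta$ the set $Z_\alpha\cap Z_\beta=\bigcup_{k\in B_\alpha\cap B_\beta}J_k$ is a finite union of finite sets, hence finite, so $\hH$ is almost disjoint and (its members being pairwise distinct) uncountable; and $X\cap Z_\alpha$ contains at least one point of each of the infinitely many pairwise disjoint blocks $J_k$ with $k\in B_\alpha$, hence is infinite. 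The only step carrying real content is the first one: recognizing that $\oo$-boundedness is precisely what lets us pull a growth bound on the new set $X$ back into the ground model. After that, the block partition and the transfer of an arbitrary ground-model a.d.\ family are routine, and I do not anticipate a genuine obstacle.
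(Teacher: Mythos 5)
Your proof is correct and follows essentially the same route as the paper: use $\oo$-bounding to produce, in $V$, a partition of $\omega$ into finite intervals each of which meets $X$, and then take unions of these blocks along a ground-model almost disjoint family. The only difference is cosmetic --- the paper obtains its intervals $[g(2k),g(2k+1)]$ by citing an earlier result, whereas you derive the block property directly via the counting estimate $|X\cap g(n)|\ge n+1$, which makes your write-up self-contained.
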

\begin{proof}
In $V[G]$ let $\seqn{x_n}$ be the strictly increasing enumeration of elements of $X$. Since $\PP$ is $\oo$-bounding, there is a strictly increasing function $g\in\oo\cap V$ such that for every $k\io$ there is $n_k\io$ for which the following inequalities are satisfied:
\[\tag{$*$}g(2k)\le x_{n_k}\le g(2k+1)\]
(cf. the proof of \cite[Corollary 2.5]{DSNik}). In $V$, let $\fF\sub\cso$ be an uncountable almost disjoint family. For every $Y\in\fF$ set:
\[Z_Y=\bigcup_{n\in Y}\big\{g(2k),g(2k)+1,\ldots,g(2k+1)\big\}.\]
Put $\hH=\big\{Z_Y\colon Y\in\fF\big\}$. Of course, for every $Y\neq Y'\in\fF$ the intersection $Z_Y\cap Z_{Y'}$ is finite, so $\hH$ is an uncountable almost disjoint family. Also, by ($*$), for every $Y\in\fF$ the intersection $X\cap Z_Y$ is infinite.
\end{proof}

We are in the position to prove the main result of this section.

\begin{proof}[Proof of Theorem \ref{theorem:df_gen}]
Assume towards the contradiction that, in $V[G]$, there are a weak* null sequence $\seqn{\mu_n}$ of finitely supported measures on $St(\aA)$ and a number $M\io$ such that $\big\|\mu_n\big\|=1$ and $\big|\supp\big(\mu_n\big)\big|\le M$ for every $n\io$. By Remark \ref{remark:reduction_m_2}, 
we may assume that, in $V$, there are two sequences $\seqi{\dot{x}_i}$ and $\seqi{\dot{y}_i}$ of $\B(\kappa)$-names for ultrafilters on $\aA$ and a condition $q\in\B(\kappa)$ forcing that:
\begin{enumerate}[(a)]
	\item $\dot{x}_i\neq\dot{x}_j$ and $\dot{y}_i\neq\dot{y}_j$ for every $i\neq j\io$,
	\item $\dot{x}_i\neq\dot{y}_j$ for every $i,j\io$,
	\item for every $A\in\aA$ and almost all $i\io$ we have: $\dot{x}_i\in\clopen{A}_\aA$ if and only if $\dot{y}_i\in\clopen{A}_\aA$.
\end{enumerate}
By Lemma \ref{lemma:df_lem}, there are a condition $p\le q$ and a sequence $\seqn{A_n}$ (in $V$!) of pairwise disjoint elements of $\aA$ such that for every $n\io$ we have $p\forces \clopen{A_n}_\aA\cap\big\{\dot{x}_i\colon\ i\io\big\}\neq\emptyset$. For each $n\io$ by $\dot{F}_n$ denote a $\B(\kappa)$-name such that $p$ forces that
\[\dot{F}_n=\big\{i\io\colon\ \big|\clopen{A_n}_\aA\cap\big\{\dot{x}_i,\dot{y}_i\big\}\big|=1\big\}.\]
Note that, by property (c), $p$ forces that each $\dot{F}_n$ is at most finite.

\medskip

We need to consider two cases. 

\medskip

(1) $p$ forces that for almost all $n\io$ the set $\dot{F}_n$ is empty. Let $r\le p$ and $N\io$ be such that for every $n\ge N$ we have $r\forces\dot{F}_n=\emptyset$. The last formula simply means that for every $n\ge N$ and $i\io$ the following holds:
\[r\forces\dot{x}_i\in\clopen{A_n}_\aA\Leftrightarrow\dot{y}_i\in\clopen{A_n}_\aA.\]
For each $n\ge N$, let $\dot{\alpha}_n$ be a $\B(\kappa)$-name such that
\[r\forces\dot{\alpha}_n=\min\big\{i\io\colon\ \dot{x}_i\in\clopen{A_n}_\aA\big\},\]
and let $\dot{u}_n$ and $\dot{v}_n$ be $\B(\kappa)$-names for ultrafilters on $\aA$ such that
\[r\forces\dot{u}_n=\dot{x}_{\dot{\alpha}_n}\text{ and }\dot{v}_n=\dot{y}_{\dot{\alpha}_n}.\]
Of course, for $n\neq m\ge N$ we have $r\forces\dot{\alpha}_n\neq\dot{\alpha}_m$.

For every $n\ge N$, since $r\forces\dot{u}_n\neq\dot{v}_n$, by Lemma \ref{lemma:bns}, we get a condition $r_n\le r$ and an element $C_n\in\aA$ such that $\mu_\kappa\big(r_n\big)>\mu_\kappa(r)/5$ and $r_n\forces C_n\in\dot{u}_n\triangle\dot{v}_n$. Since $r_n\forces \dot{u}_n,\dot{v}_n\in\clopen{A_n}_\aA$, we may actually assume that $C_n\le A_n$. In particular, $C_n\wedge C_m=0$ for every $n\neq m\ge N$. Set:
\[s=\bigwedge_{n\ge N}\bigvee_{k\ge n}r_k;\]
then, $\mu_\kappa(s)\ge\mu_\kappa(r)/5$, so $s\in\B(\kappa)$ and $s\le r$. It follows that $s$ forces that $C_n\in\dot{u}_n\triangle\dot{v}_n$ for infinitely many $n\ge N$, or, in other words, that 
\[\tag{$+$}\big|\clopen{C_n}_\aA\cap\big\{\dot{u}_n,\dot{v}_n\big\}\big|=1\]
for infinitely many $n\ge N$. Since the sequence $\seq{C_n}{n\ge N}$ is in $V$, its supremum exists in $\aA$, so set
\[C=\bigvee_{n\ge N}C_n,\]
and note that for every $n\ge N$ we have $C\wedge A_n=C_n$ (since $\seqn{A_n}$ is pairwise disjoint).

We claim that $s$ forces that $\big|\clopen{C}_\aA\cap\big\{\dot{x}_i,\dot{y}_i\big\}\big|=1$ for infinitely many $i\io$, contradicting condition (c). Indeed, observe that for every $n\ge N$ we have:
\begin{align*}
s\forces\clopen{C}_\aA\cap\big\{\dot{x}_{\dot{\alpha}_n},\dot{y}_{\dot{\alpha}_n}\big\}&\sub\clopen{C}_\aA\cap\clopen{A_n}_\aA\cap\big\{\dot{x}_{\dot{\alpha}_n},\dot{y}_{\dot{\alpha}_n}\big\}=\\
&=\clopen{C\wedge A_n}_\aA\cap\big\{\dot{x}_{\dot{\alpha}_n},\dot{y}_{\dot{\alpha}_n}\big\}=\clopen{C_n}_\aA\cap\big\{\dot{x}_{\dot{\alpha}_n},\dot{y}_{\dot{\alpha}_n}\big\},
\end{align*}
and so, since $C_n\le C$,
\[s\forces\clopen{C}_\aA\cap\big\{\dot{x}_{\dot{\alpha}_n},\dot{y}_{\dot{\alpha}_n}\big\}=\clopen{C_n}_\aA\cap\big\{\dot{x}_{\dot{\alpha}_n},\dot{y}_{\dot{\alpha}_n}\big\}.\]
By ($+$), it follows that
\[s\forces\big|\clopen{C}_\aA\cap\big\{\dot{x}_{\dot{\alpha}_n},\dot{y}_{\dot{\alpha}_n}\big\}\big|=\big|\clopen{C_n}_\aA\cap\big\{\dot{x}_{\dot{\alpha}_n},\dot{y}_{\dot{\alpha}_n}\big\}\big|=1\]
for infinitely many $n\ge N$.
\medskip

(2) There is a condition $r\le p$ forcing that for infinitely many $n\io$ the set $\dot{F}_n$ is non-empty. Let then $\dot{f}$ be a $\B(\kappa)$-name for a function $\omega\to\omega$ such that for every $n\io$ we have:
\[
r\forces\dot{f}(n)=
\begin{cases}
	\max\big\{m\ge n\colon\ \dot{F}_n\cap\dot{F}_m\neq\emptyset\big\},&\text{ if }\dot{F}_n\neq\emptyset,\\
	n,&\text{ if }\dot{F}_n=\emptyset.
\end{cases}
\]
Since $r$ forces that $\dot{F}_n$ is finite for every $n\io$ and that each $i\io$ belongs to at most two different $\dot{F}_n$'s, the above definition is valid. For every $n,m\io$ we also have $r\forces\dot{f}(n)\ge n$ and
\[\tag{$*$}r\forces\big(m>\dot{f}(n)\Rightarrow\dot{F}_n\cap\dot{F}_m=\emptyset\big).\]

Since $\B(\kappa)$ is $\omega^\omega$-bounding, there are a strictly increasing function $g\colon\omega\to\omega$ such that $g(0)>0$ and a condition $s\le r$ such that $s\forces \dot{f}(n)<g(n)$ for every $n\io$. Define the function $h\colon\omega\to\omega$ for every $n\io$ as: $h(n)=g^{n+1}(0)=(g\circ g\circ\ldots\circ g)(0)$, where the composition is taken $n+1$ times.

\medskip

We have two subcases of case (2).

\medskip

(2a) There is a condition $t\le s$ forcing that for infinitely many $n\io$ the set $\dot{F}_{h(n)}$ is non-empty. Let $H$ be a $\B(\kappa)$-generic filter over $V$ containing $t$. For every $\B(\kappa)$-name $\sigma$ by $\sigma^H$ we denote its evaluation in $V[H]$. 

We now work in $V[H]$. For every $n<m\io$ we have:
\[h(m)=g^{m+1}(0)=g^{m-n}\big(g^{n+1}(0)\big)=g^{m-n}(h(n))\ge g(h(n))>\dot{f}^H(h(n)),\]
so, by ($*$), it holds that
\[\tag{$**$}\dot{F}_{h(n)}^H\cap\dot{F}_{h(m)}^H=\emptyset.\]
Since $\aA$ is $\sigma$-complete in $V$ and $h\in V$, the supremum
\[A=\bigvee_{n\io}A_{h(n)}\]
exists in $\aA$. Let $X\in\cso$ be any set such that for every $n\in X$ it holds $\dot{F}_{h(n)}^H\neq\emptyset$, so we may pick $i_n\in \dot{F}_{h(n)}^H$. Note that, by condition (c), there is no $X'\in\ctblsub{X}$ such that $\big|\big\{\dot{x}_{i_n}^H,\dot{y}_{i_n}^H\big\}\cap\clopen{A}_\aA\big|=1$ for every $n\in X'$. It follows that---removing a finite number of elements of $X$ if necessary---for every $n\in X$ we have $\big\{\dot{x}_{i_n}^H,\dot{y}_{i_n}^H\big\}\sub\clopen{A}_\aA$. Property ($**$) implies that for every $n\in X$ we have:
\[\Big|\big\{\dot{x}_{i_n}^H,\dot{y}_{i_n}^H\big\}\cap\big(\clopen{A}_\aA\sm\bigcup_{n\io}\clopen{A_{h(n)}}_\aA\big)\Big|=1,\]
that is, there is a unique point in $\big\{\dot{x}_{i_n}^H,\dot{y}_{i_n}^H\big\}$ which belongs to the boundary (in $St(\aA)$) of the open set $\bigcup_{n\io}\clopen{A_{h(n)}}_\aA$.

By Lemma \ref{lemma:oobounding_ad}, there is an uncountable almost disjoint family $\hH\sub\cso$ in $V$ such that the intersection $X\cap Z$ is infinite for every $Z\in\hH$. Since every $Z\in\hH$ is in $V$, the supremum
\[A_Z=\bigvee_{n\in Z}A_{h(n)}\]
exists in $\aA$. For every $Z\in\hH$ set:
\[B_Z=\clopen{A_Z}_\aA\sm\bigcup_{n\in Z}\clopen{A_{h(n)}}_\aA.\]
For every $Z_1\neq Z_2\in\hH$ we have:
\[\clopen{A_{Z_1}}_\aA\cap\clopen{A_{Z_2}}_\aA=\bigcup_{n\in Z_1\cap Z_2}\clopen{A_{h(n)}}_\aA,\]
and hence $B_{Z_1}\cap B_{Z_2}=\emptyset$. Since $\hH$ is uncountable and $X$ is countable, it follows that there is $Z\in\hH$ such that $\big\{\dot{x}_{i_n}^H,\dot{y}_{i_n}^H\big\}\cap B_Z=\emptyset$ for every $n\in X\cap Z$, and hence $\big|\big\{\dot{x}_{i_n}^H,\dot{y}_{i_n}^H\big\}\cap\clopen{A_Z}_\aA\big|=1$ for infinitely many $n\in X$, which contradicts condition (c).

\medskip

(2b) $s$ forces that for almost all $n\io$ the set $\dot{F}_{h(n)}$ is empty. We proceed exactly in the same way as in case (1), that is, using Lemma \ref{lemma:bns} we obtain $N\io$, an antichain $\seq{C_n}{n\ge N}$ in $V$ such that $C_n\le A_{h(n)}$ for every $n\ge N$, and a condition $t\le s$ forcing that $\big|\clopen{\bigvee_{n\io}C_n}_\aA\cap\big\{\dot{x}_i,\dot{y}_i\big\}\big|=1$ for infinitely many $i\io$, which again contradicts condition (c).
\end{proof}

\subsection{Dominating reals\label{sec:dominating}}

Let $\PP\in V$ be a notion of forcing and $G$ a $\PP$-generic filter over $V$. Recall that a real $f\ioo\cap V[G]$ is \textit{dominating over $V$} if $g\le^*f$ for every $g\ioo\cap V$. By $\ooi$ we denote the family of all those functions $f\ioo$ which are increasing, that is, $f(n)\le f(n+1)$ for every $n\io$, and $\lim_{n\to\infty}f(n)=\infty$. Let us then also say that a real $h\in(\oo)^\infty\cap V[G]$ is \textit{anti-dominating over $V$} if $h\le^*g$ for every $g\iooi\cap V$.

It appears that adding a dominating real is equivalent to adding an anti-dominating real. To prove it, we need to introduce the following auxiliary operator $\Phi\colon\ooi\to\ooi$. It seems that the idea standing behind $\Phi$, and hence also behind Propositions \ref{prop:phi_props} and \ref{prop:domin_anti_domin}, is standard (cf. e.g. Canjar \cite[Sections 1.5 and 3.6]{Can88}).

Let $f\iooi$ and write $\ran(f)=\big\{n_1^f<n_2^f<n_3^f<\ldots\big\}$. Set also $n_0^f=-1$, so always $n_0^f<n_1^f$. Note that for every $n\io$ there is  unique $i\io$ such that $n_i^f\le n<n_{i+1}^f$. Put:
\[\Phi(f)(n)=\min f^{-1}\big(n_{i+1}^f\big).\]
It is immediate that $\Phi(f)\iooi$. Note that $\Phi(\id_\omega)(n)=n+1$ for every $n\io$. The next proposition lists most basic properties of $\Phi$.

\begin{proposition}\label{prop:phi_props}
For every $f,g\iooi$ the following conditions hold:
\begin{enumerate}
	\item $\big(f\circ\Phi(f)\big)>\id_\omega$,
	\item $\big(\Phi(f)\circ f\big)>\id_\omega$,
	\item $\Phi(\Phi(f))=f$,
	\item if $f\le^*g$, then $\Phi(g)\le^*\Phi(f)$.
\end{enumerate}
\end{proposition}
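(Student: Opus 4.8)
The plan is to unwind the definition of $\Phi$ in each case, working directly with the enumeration $\ran(f)=\{n_1<n_2<\cdots\}$ (and $n_0=-1$). Throughout, for a given argument $n$ I will write $i=i(n)$ for the unique index with $n_i\le n<n_{i+1}$, so that by definition $\Phi(f)(n)=\min f^{-1}(n_{i+1})$; in particular $f\big(\Phi(f)(n)\big)=n_{i+1}$.

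For (1): fix $n$ and let $i=i(n)$. Then $f\big(\Phi(f)(n)\big)=n_{i+1}>n$ since $n<n_{i+1}$ by the choice of $i$; this is exactly $\big(f\circ\Phi(f)\big)(n)>n$, so $\big(f\circ\Phi(f)\big)>\id_\omega$ everywhere, not just eventually. For (2): fix $n$ and consider the value $f(n)=n_j$ for the appropriate $j\ge 1$ (here I use that $f$ takes the value $n_j$ at $n$, i.e. $n\in f^{-1}(n_j)$). I want $\Phi(f)(f(n))>n$, i.e. $\min f^{-1}(n_{k+1})>n$ where $k=i(f(n))=i(n_j)$. Now $i(n_j)=j$ since $n_j\le n_j<n_{j+1}$, so $\Phi(f)(f(n))=\min f^{-1}(n_{j+1})$. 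Every element of $f^{-1}(n_{j+1})$ is strictly larger than every element of $f^{-1}(n_j)$ because $f$ is non-decreasing and $n_{j+1}>n_j$; since $n\in f^{-1}(n_j)$, we get $\min f^{-1}(n_{j+1})>n$, as desired.

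For (3): $\Phi(f)$ is itself an element of $\ooi$, and I claim $\ran(\Phi(f))=\{\min f^{-1}(n_{i}):i\ge 1\}$, i.e. $\Phi(f)$ takes as its $i$-th value (in increasing order) the number $m_i:=\min f^{-1}(n_i)$. Indeed $\Phi(f)$ is constant equal to $\min f^{-1}(n_{i+1})=m_{i+1}$ on the block $n_i\le n<n_{i+1}$, and these values are strictly increasing in $i$ by the monotonicity argument used in (2); also $m_1=\min f^{-1}(n_1)$ is the first value. So with the enumeration $\ran(\Phi(f))=\{m_1<m_2<\cdots\}$ and convention $m_0=-1$, computing $\Phi(\Phi(f))(n)$ requires finding the unique $j$ with $m_j\le n<m_{j+1}$ and returning $\min \Phi(f)^{-1}(m_{j+1})$. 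By the block description, $\Phi(f)^{-1}(m_{j+1})$ is exactly the block $\{n_j,n_j+1,\dots,n_{j+1}-1\}$ (the $n$'s on which $\Phi(f)=m_{j+1}$), whose minimum is $n_j$. So I must check that $m_j\le n<m_{j+1}$ holds precisely when $n_j\le n<n_{j+1}$ — but this is immediate because $m_j$ and $n_j$ lie in the same block structure: $\Phi(f)=m_{j+1}$ exactly on $[n_j,n_{j+1})$, which forces the $j$-th "block boundary" of $\ran(\Phi(f))$ to align so that $\Phi(\Phi(f))(n)=n_j$ whenever $n\in[n_j,n_{j+1})$, i.e. $\Phi(\Phi(f))(n)=f^{-1}$'s... wait — I should double-check: I want $\Phi(\Phi(f))=f$, and $f$ on the block need not be constant. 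The cleaner route is: $\Phi$ is an involution on $\ooi$ precisely because passing from $f$ to $\Phi(f)$ swaps the roles of "the increasing enumeration of the range" and "the positions where new range-values first appear"; I will verify $\Phi(\Phi(f))(n)=f(n)$ pointwise by carefully tracking, for $n\in[n_i,n_{i+1})$, that the relevant index for $\Phi(f)$ at argument $n$ recovers $n_i$-information, and reading off that $\min(\Phi(f))^{-1}$ of the next $\Phi(f)$-range value equals $f(n)$. This bookkeeping is the main obstacle — it is purely combinatorial but the double inversion is easy to get wrong, so I will set up explicit notation for the two block decompositions and match them index by index.

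For (4): suppose $f\le^* g$, say $f(n)\le g(n)$ for all $n\ge n^*$. I want $\Phi(g)(n)\le\Phi(f)(n)$ for all large $n$. Unwinding, $\Phi(f)(n)=\min f^{-1}(a)$ where $a$ is the least element of $\ran(f)$ strictly above $n$, and similarly $\Phi(g)(n)=\min g^{-1}(b)$ with $b$ the least element of $\ran(g)$ strictly above $n$. The inequality $f\le^* g$ means $g$ "reaches higher faster", so for large $n$ the least preimage under $g$ of a range-value exceeding $n$ occurs no later than the corresponding one for $f$; concretely, if $m=\Phi(f)(n)$ then $f(m)>n$, hence $g(m)\ge f(m)>n$ (for $m\ge n^*$, which holds once $n$ is large by (1) or (2)), so $m$ lies in some $g^{-1}$ of a range-value above $n$, giving $\min g^{-1}(b)\le m$, i.e. $\Phi(g)(n)\le\Phi(f)(n)$. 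I expect this to go through cleanly once the "$m\ge n^*$ for large $n$" point is justified, which follows since $\Phi(f)(n)\to\infty$.
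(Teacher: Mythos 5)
Parts (1), (2), and (4) of your proposal are correct. For (1) and (2) you reproduce essentially the paper's computations. For (4) your argument is a direct one where the paper argues by contradiction: from $g\big(\Phi(f)(n)\big)\ge f\big(\Phi(f)(n)\big)>n$ you conclude $\Phi(g)(n)\le\Phi(f)(n)$. This works, but you should spell out the one-line justification of the step you gloss over: if $g(m)>n$ and $b$ is the least element of $\ran(g)$ strictly above $n$, then $b\le g(m)$, hence $\min g^{-1}(b)\le\min g^{-1}(g(m))\le m$ by monotonicity of $g$. Your handling of the ``$\Phi(f)(n)\ge n^*$ for large $n$'' point is fine since $\Phi(f)$ tends to infinity.

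The genuine gap is in (3), which you explicitly leave unfinished. You correctly identify $\ran(\Phi(f))=\{m_1<m_2<\cdots\}$ with $m_i=\min f^{-1}(n_i)$, and that $\Phi(f)^{-1}(m_{j+1})=\{n_j,n_j+1,\dots,n_{j+1}-1\}$ has minimum $n_j$; but then you go astray. The condition you say you ``must check,'' namely that $m_j\le n<m_{j+1}$ holds precisely when $n_j\le n<n_{j+1}$, is false in general and is not what is needed, and your worry that ``$f$ on the block need not be constant'' is misplaced: the block relevant to evaluating $\Phi(\Phi(f))$ at $n$ is $[m_j,m_{j+1})=\big[\min f^{-1}(n_j),\,\min f^{-1}(n_{j+1})\big)$, and on \emph{this} block $f$ is constant equal to $n_j$ (indeed $f(n)\ge f(m_j)=n_j$ by monotonicity, while $f(n)\ge n_{j+1}$ would force $n\ge\min f^{-1}(n_{j+1})=m_{j+1}$, again by monotonicity and the fact that $n_{j+1}\in\ran(f)$; so $f(n)\in\ran(f)\cap[n_j,n_{j+1})=\{n_j\}$). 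With that, for $n\in[m_j,m_{j+1})$ one reads off $\Phi(\Phi(f))(n)=\min\Phi(f)^{-1}(m_{j+1})=n_j=f(n)$, which is exactly how the paper closes the argument. As written, your (3) is a declared intention to do the bookkeeping rather than the bookkeeping itself, and the intermediate claim you would have tried to verify is the wrong one.
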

\begin{proof}
Let $f,g\iooi$. Enumerate:
\[\ran(f)=\big\{n_1^f<n_2^f<n_3^f<\ldots\big\}\quad\text{and}\quad\ran(g)=\big\{n_1^g<n_2^g<n_3^g<\ldots\big\},\]
and set $n_0^f=-1$ and $n_0^g=-1$.

We first prove (1) and (2). Fix $n\io$ and let $i,j\io$ be such that $n_i^f\le n< n_{i+1}^f$ and $n_j^f=f(n)$. We have:
\[\big(f\circ\Phi(f)\big)(n)=f\big(\min f^{-1}\big(n_{i+1}^f\big)\big)=n_{i+1}^f>n,\]
which proves (1). To see (2), note that the monotonicity of $f$ implies that $\min f^{-1}\big(n_{j+1}^f\big)>n$ and thus we have:
\[\big(\Phi(f)\circ f\big)(n)=\Phi(f)(f(n))=\min f^{-1}\big(n_{j+1}^f\big)>n.\]

Let us now prove (3). For every $i\ge 1$ set $n_i'=\min f^{-1}\big(n_i^f\big)$. By the monotonicity of $f$, $n_{i+1}'>n_i'$ for every $i\ge 1$. 
Note that $n_1'=0$ and
\[(\Phi(f))^{-1}\big(n_{1}'\big)=\big\{0,1,2,\ldots,n_{1}^f-1\big\},\]
so $(\Phi(f))^{-1}\big(n_{1}'\big)=\emptyset$ if $n_1^f=0$, as well as for each $i\ge 1$ we have:
\[(\Phi(f))^{-1}\big(n_{i+1}'\big)=\big\{n_i^f,n_i^f+1,n_i^f+2,\ldots,n_{i+1}^f-1\big\}.\]
Fix $n\io$ and let $i\ge 1$ be such that $n_i'\le n<n_{i+1}'$. It means that
\[\min f^{-1}\big(n_i^f\big)\le n<\min f^{-1}\big(n_{i+1}^f\big),\]
so $f(n)=n_i^f$. It holds:
\[(\Phi(\Phi(f))(n)=\min\Big((\Phi(f))^{-1}\big(n_{i+1}'\big)\Big)=n_i^f=f(n),\]
which implies (3).

Finally, we shall prove (4). Assume that $f\le^*g$. There exists $N\io$ such that $f(n)\le g(n)$ for every $n\ge N$. Let $n>f(N)$. There are $i,j,k\io$ such that $n_i^f\le n<n_{i+1}^f$, $n_j^g\le n<n_{j+1}^g$, and $n_k^f=f(N)$. Note that $k\le i$, so $n_k^f\le n_i^f$. Set:
\[l=\Phi(f)(n)=\min f^{-1}\big(n_{i+1}^f\big)\]
and
\[m=\Phi(g)(n)=\min g^{-1}\big(n_{j+1}^g\big).\]
We claim that $m\le l$, so for the sake of contradiction let us assume that $l<m$. We then have:
\[g(l)\le n_j^g\le n<n_{i+1}^f=f(l),\]
so $g(l)<f(l)$. But since $f$ is increasing, it holds:
\[l=\min f^{-1}\big(n_{i+1}^f\big)>\max f^{-1}\big(n_i^f\big)\ge\max f^{-1}\big(n_k^f\big)\ge N,\]
so $f(l)\le g(l)$, which is a contradiction.
\end{proof}

\begin{proposition}\label{prop:domin_anti_domin}
Let $\PP\in V$ be a notion of forcing. Then, $\PP$ adds a dominating real if and only if it adds an anti-dominating real.
\end{proposition}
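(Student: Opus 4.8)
The plan is to prove that $f\iooi$ witnesses "dominating" exactly when $\Phi(f)\iooi$ witnesses "anti-dominating", after which the proposition falls out of the fact, recorded in Proposition~\ref{prop:phi_props}(3), that $\Phi$ is an involution of $\ooi$. There are two harmless preliminaries. First, $\Phi$ is defined by an absolute formula, so $\Phi$ applied to a real in $V[G]$ is again a real in $V[G]$. Second, if $\PP$ adds a dominating real then it adds one lying in $\ooi$: given $f\ioo\cap V[G]$ dominating over $V$, the function $f'(n)=\max\{f(i)\colon i\le n\}+n$ is an element of $\ooi\cap V[G]$ with $f'\ge f$ pointwise, hence still dominating over $V$. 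An anti-dominating real already lies in $\ooi$ by definition, so no normalization is needed on that side.

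For the forward implication I would take $f\iooi\cap V[G]$ dominating over $V$ and check that $\Phi(f)$ is anti-dominating over $V$. Fix any $g\iooi\cap V$; then $\Phi(g)\iooi\cap V$ as well, so $\Phi(g)\le^* f$ by the choice of $f$. Applying Proposition~\ref{prop:phi_props}(4) to the pair $\Phi(g),f$ gives $\Phi(f)\le^*\Phi(\Phi(g))$, and by Proposition~\ref{prop:phi_props}(3) the right-hand side is just $g$. Hence $\Phi(f)\le^* g$ for every $g\iooi\cap V$, which is exactly what anti-dominating means.

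For the converse I would take $h\iooi\cap V[G]$ anti-dominating over $V$ and check that $\Phi(h)$ is dominating over $V$. First, for $g\iooi\cap V$ one has $\Phi(g)\iooi\cap V$, so $h\le^*\Phi(g)$ by the choice of $h$; applying Proposition~\ref{prop:phi_props}(4) and then (3) yields $g=\Phi(\Phi(g))\le^*\Phi(h)$. For an arbitrary $g\ioo\cap V$, set $\tilde g(n)=\max\{g(i)\colon i\le n\}+n+1\iooi\cap V$; then $g\le\tilde g\le^*\Phi(h)$, so $g\le^*\Phi(h)$. Thus $\Phi(h)$ dominates every ground-model real and is dominating over $V$.

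There is no genuine obstacle here: only properties (3) and (4) of $\Phi$ are used, and the two directions are mirror images of each other. The only mildly delicate points are the two normalizations — replacing an arbitrary dominating real by one in $\ooi$, and replacing a non-monotone ground-model $g$ by the monotone dominating $\tilde g$ — both of which are routine.
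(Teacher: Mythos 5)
Your proof is correct and follows essentially the same route as the paper's: normalize the dominating real into $\ooi$, use that $\Phi$ is an involution of $\ooi\cap V$ (Proposition~\ref{prop:phi_props}(3)) together with the order-reversal property (4) to transfer $\le^*$-statements across $\Phi$. The only difference is that you write out the converse direction explicitly (including the routine step of dominating an arbitrary ground-model $g\ioo$ by a monotone $\tilde g\iooi$), where the paper simply says the other direction is similar.
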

\begin{proof}
Let $G$ be a $\PP$-generic filter over $V$. We work in $V[G]$. Assume that there is a dominating real $f\ioo$ over $V$ and define an auxiliary function $g\ioo$ as follows:
\[g(n)=n+\max\{f(m)\colon m\le n\},\]
where $n\io$. Obviously, $g\iooi$ and it is also a dominating real over $V$, so for every $h\iooi\cap V$ we have $h\le^* g$.

For every $h\iooi\cap V$, we have $\Phi(h)\iooi\cap V$ and, by Proposition \ref{prop:phi_props}.(3), $h=\Phi(\Phi(h))$. It follows that
\[\tag{$*$}\Phi\big[\ooi\cap V]=\ooi\cap V.\]
Since $g\ioo$ and $g$ is dominating every $h\iooi\cap V$, we get by ($*$) and Proposition \ref{prop:phi_props}.(4) that $\Phi(g)\le^*h$ for every $h\iooi\cap V$. In other words, we get that $\Phi(g)$ is an anti-dominating real over $V$. 

The proof in the other direction is similar.
\end{proof}

We are ready to prove the main result of this section.

\dominating*
\begin{proof}
We first work in $V$. By the Josefson--Nissenzweig theorem (see \cite[Chapter XII]{Die84}) and the Riesz representation theorem, there is a weak* null sequence $\seqn{\mu_n}$ of measures on the Boolean algebra $\aA$ such that $\big\|\mu_n\big\|=1$ for every $n\io$. For every $A\in\aA$ define the sequences $c_A,d_A\in\R^\omega$ as follows:
\[c_A(n)=\min\Big\{\big|\mu_n(A)\big|+1/n,\ 1\Big\},\]
\[d_A(n)=\min\big\{1/m\colon\ m\io,\ c_A(k)\le 1/m\text{ for all }k\ge n\big\},\]
where $n\io$. Then, $c_A(n)>0$ and 
\[0\le\big|\mu_n(A)\big|\le c_A(n)\le d_A(n)\le 1\]
for every $n\io$, and
\[\lim_{n\to\infty}d_A(n)=\lim_{n\to\infty}c_A(n)=0.\]
Finally, for every $A\iA$ and $n\io$ set $e_A(n)=1/d_A(n)$. It follows that $e_A\iooi$.

Let us now go to $V[G]$. $\PP$ adds a dominating real, so by Proposition \ref{prop:domin_anti_domin} there is an anti-dominating real $g\iooi\cap V[G]$ over $V$. By taking the function $\max(g,1)$ instead of $g$, we may assume that $g(n)>0$ for every $n\io$. For every $A\iA$ we have $g\le^*e_A$, so if  we define the sequence $c\in\R^\omega$ by the formula $c(n)=1/g(n)$, where $n\io$, then we get that $d_A\le^* c$ for every $A\iA$. Of course, $c(n)>0$ for every $n\io$ and $\lim_{n\to\infty}c(n)=0$.

For every $n\io$ define the measure $\nu_n$ on $\aA$ as follows:
\[\nu_n(A)=\mu_n(A)/c(n),\]
where $A\iA$. Note that $\big\|\mu_n\big\|=1$ yields that
\[\big\|\nu_n\big\|=\big\|\mu_n\big\|/c(n)=1/c(n)=g(n),\]
so $\sup_{n\io}\big\|\nu_n\big\|=\infty$, as $g\iooi$. On the other hand, for every $A\iA$ we have
\[\big|\nu_n(A)\big|=\big|\mu_n(A)\big|/c(n)\le d_A(n)/c(n)\le 1\]
for sufficiently large $n\io$, so $\sup_{n\io}\big|\nu_n(A)\big|<\infty$ for every $A\iA$. It follows that the sequence $\seqn{\nu_n}$ is pointwise bounded but not uniformly bounded, hence, by Lemma \ref{lemma:nik_equiv_forms}, $\aA$ does not have the Nikodym property in $V[G]$.
\end{proof}

%
%
%
%
%
%
%
%

\section{Cardinal characteristics of the continuum\label{sec:cardinal}}

In this section we provide several consequences of Theorem \ref{theorem:random} to cardinal characteristics of the continuum. For basic information concerning various standard cardinal characteristics, we refer the reader to Blass \cite{BlaHBK}.

We start with the definitions of two characteristics $\fraknik$ and $\frakgr$ which we call \textit{the Nikodym number} and \textit{the Grothendieck number}, respectively:
\[\fraknik=\min\big\{|\aA|\colon\ \aA\text{ is an infinite Boolean algebra with the Nikodym property}\big\},\]
and
\[\frakgr=\min\big\{|\aA|\colon\ \aA\text{ is an infinite Boolean algebra with the Grothendieck property}\big\}.\]
A detailed discussion on the estimations of $\fraknik$ and $\frakgr$ in terms of standard cardinal characteristics of the continuum occurring in Cicho\'n's and van Douwen's diagrams as well as on miscellaneous consistency results one can find in the survey paper \cite{DSsurv}. In \cite{SZForExt} the authors proved that in the Miller model the inequality $\fraknik=\frakgr<\frakd$ holds. We now show that the proof of Theorem \ref{theorem:random} easily implies that the converse inequality may also consistently hold.

Similarly as in Section \ref{sec:df}, for an infinite set $I$ let $\mu_I$ denote the standard product probability measure on the space $2^I$ and let $\B(I)=Bor\big(2^I\big)\big/\big\{A\in Bor\big(2^I\big)\colon\ \mu_I(A)=0\big\}$ be its measure algebra. Again, it is well-known that $\B(I)$ is a $\oo$-bounding poset adding $|I|$ many random reals (see \cite[Section 3.1]{BJ95}).

\begin{corollary}\label{cor:nik_gr_d}
Let $\kappa$ be an infinite cardinal number. Let $G$ be a $\B(\kappa)$-generic filter over $V$. Then, in $V[G]$, there is no infinite Boolean algebra of size $<\kappa$ with the Nikodym property or the Grothendieck property. 

Consequently, in the random model every infinite Boolean algebra of size $\le\frakd$ has neither the Nikodym property nor the Grothendieck property.
\end{corollary}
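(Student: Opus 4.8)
The plan is to reduce the first assertion to Theorem~\ref{theorem:random} via a standard nice-names reflection argument, and then to read off the statement about the random model from the fact that $\B(\kappa)$ is $\oo$-bounding.

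For the first assertion I would argue by contradiction. Suppose that in $V[G]$ there is an infinite Boolean algebra $\aA$ with $|\aA|<\kappa$ enjoying the Nikodym or the Grothendieck property; replacing $\aA$ by an isomorphic copy I may assume that its underlying set is an (infinite) ordinal $\lambda<\kappa$ and that its Boolean operations are functions lying in $V[G]$. Since $\B(\kappa)$ is ccc and each element of $\B(\kappa)$ --- a Borel subset of $2^\kappa$ modulo the null ideal --- depends on only countably many coordinates, every one of these finitely many operations admits a nice $\B(\kappa)$-name whose set of mentioned coordinates has size at most $\lambda\cdot\omega=\lambda<\kappa$. Letting $I\sub\kappa$ be the union of these supports we get $|I|<\kappa$, and in particular $I\neq\kappa$. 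Putting $G_I=G\cap\B(I)$, it follows that $\aA\in V[G_I]$, and since ``being an infinite Boolean algebra'' is absolute, $\aA$ is an infinite Boolean algebra in $V[G_I]$.

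Next I would invoke the classical factorization of measure algebras (cf.\ \cite{BJ95}): forcing with $\B(\kappa)$ factors as forcing with $\B(I)$ followed by forcing with the measure algebra of $2^{\kappa\sm I}$ computed in $V[G_I]$; in particular, $V[G]$ is a generic extension of $V[G_I]$ by a forcing which adds a random real over $V[G_I]$ (concretely, fixing any countably infinite $J\sub\kappa\sm I$, the restriction to the $J$-coordinates of the generic point of $2^\kappa$ is, after the obvious identification $2^J\cong\Cantor$, a random real over $V[G_I]$). Applying Theorem~\ref{theorem:random} inside $V[G_I]$ --- with $\aA$ in the role of the ground-model Boolean algebra and the quotient forcing $\B(\kappa)/G_I$ in the role of $\PP$ --- we conclude that in $V[G]$ the algebra $\aA$ has neither the Nikodym nor the Grothendieck property, contradicting our assumption. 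Hence no such $\aA$ exists.

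Finally, for the ``consequently'' clause, recall that in the random model (that is, when $\kappa=\omega_2$ and $V$ satisfies GCH) the poset $\B(\kappa)$ is $\oo$-bounding, so every $f\in\oo\cap V[G]$ is dominated by some $g\in\oo\cap V$; as $|\oo\cap V|=\aleph_1$ under GCH, the ground-model reals form a dominating family in $V[G]$ and therefore $\frakd=\aleph_1<\aleph_2=\kappa$ there. Thus every infinite Boolean algebra of size $\le\frakd$ has size $<\kappa$, and the first part applies. The one genuinely delicate point in the argument is the factorization step --- one must be careful to verify that the residual forcing adds a random real over the \emph{intermediate} model $V[G_I]$, not merely over $V$, since this is exactly what is needed in order to invoke Theorem~\ref{theorem:random}; the cardinal bookkeeping for $I$ and the absoluteness remarks are routine.
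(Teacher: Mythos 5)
Your proposal is correct and follows essentially the same route as the paper: a c.c.c./countable-support reflection into an intermediate model $V[G\rstr I]$ with $|I|<\kappa$, followed by the random-real argument of Theorem~\ref{theorem:random} applied over that intermediate model, and the $\oo$-bounding observation for the $\frakd$ clause. The only (harmless) difference is that the paper reflects merely the trace $\big\{[A]_\aA\cap F\colon A\in\aA\big\}$ of the algebra on a fixed countable set $F$ of ultrafilters and then repeats the proof of Theorem~\ref{theorem:random}, whereas you absorb the whole algebra into $V[G_I]$ so as to quote that theorem as a black box.
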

\begin{proof}
We work in $V[G]$. Let $\aA$ be an infinite Boolean algebra of size $<\kappa$ and $F=\big\{x_n\colon\ n\io\big\}$ be a countable subset of its Stone space such that $x_n\neq x_m$ for $n\neq m\io$. By the standard argument based on $\B(\kappa)$ being c.c.c., there is $I\subset\kappa$ such that $|I|=|\aA|<\kappa$ and
\[\big\{[A]_\aA\cap F\colon\ A\in\aA\big\}\in V[G\rstr I].\]
In $V[G]$ there is a random real $r\in\Cantor$ over $V[G\rstr I]$. Now it suffices to consider the sequence $\seqn{\mu_n}$ of measures on $St(\aA)$, defined for every $n\io$ by the formula:
\[\nu_n=\alpha_n\cdot\sum_{i\in I_n}(-1)^{r(i)+1}\delta_{x_i},\]
where $\alpha_n$ and $I_n$ are as previously, and repeat the proof of Theorem \ref{theorem:random}.
\end{proof}

\begin{remark}
Note that the same argument as in the above proof works, e.g., for finite support iterations of $\B(\omega)$ of length $\kappa$ for regular uncountable $\kappa$.
\end{remark}

\begin{corollary}\label{cor:d_nik_gr}
$\frakd<\fraknik=\frakgr$ holds in the random model. \noproof
\end{corollary}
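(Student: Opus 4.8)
The plan is to deduce the claim from Corollary~\ref{cor:nik_gr_d} together with two standard facts about the random real model: that forcing with $\B(\omega_2)$ over a model of GCH keeps $\frakd=\omega_1$ while making $\frakc=\omega_2$. Recall that by the \emph{random model} one means the extension $V[G]$, where $G$ is $\B(\omega_2)$-generic over a ground model $V$ of GCH.

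First I would record that $\frakd=\omega_1$ in $V[G]$: since $\B(\omega_2)$ is $\oo$-bounding, every function in $\oo\cap V[G]$ is dominated everywhere, hence eventually, by some function in $\oo\cap V$, so any dominating family of $V$ stays dominating in $V[G]$ and $\frakd^{V[G]}\le\frakd^V=\omega_1$, the reverse inequality being trivial. Next, $\frakc=\omega_2$ in $V[G]$ by the usual nice-names computation for the c.c.c.\ poset $\B(\omega_2)$ over a model of GCH. Since $\wo$ is $\sigma$-complete, by the theorems of Nikodym, Dieudonn\'e, and Grothendieck recalled in the Introduction it has both the Nikodym and the Grothendieck property, and $|\wo|=\frakc=\omega_2$; hence $\fraknik\le\omega_2$ and $\frakgr\le\omega_2$ in $V[G]$. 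Conversely, Corollary~\ref{cor:nik_gr_d} applied with $\kappa=\omega_2$ yields that in $V[G]$ no infinite Boolean algebra of size $<\omega_2$ has the Nikodym or the Grothendieck property, i.e.\ $\fraknik\ge\omega_2$ and $\frakgr\ge\omega_2$. Putting the bounds together gives $\fraknik=\frakgr=\omega_2$, and combined with $\frakd=\omega_1$ this is exactly $\frakd<\fraknik=\frakgr$.

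I do not expect any real obstacle, since the entire substance of the statement is carried by Corollary~\ref{cor:nik_gr_d} (and hence by Theorem~\ref{theorem:random}), while the remaining ingredients---$\oo$-boundingness of $\B(\omega_2)$, the value of $\frakc$ in the random model, and the classical fact that $\sigma$-complete algebras enjoy both properties---are entirely routine. The only mild point of care is to take the random model over a model of GCH, so that both $\frakd^V=\omega_1$ and $\frakc^{V[G]}=\omega_2$ are guaranteed; once that is fixed, the argument is immediate.
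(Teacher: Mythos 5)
Your proof is correct and follows exactly the route the paper intends: the corollary is left without proof precisely because it is the combination of Corollary~\ref{cor:nik_gr_d} (giving $\fraknik,\frakgr\ge\omega_2$ for $\kappa=\omega_2$) with the routine facts that $\B(\omega_2)$ is $\oo$-bounding (so $\frakd=\omega_1$) and that $\wp(\omega)$, being $\sigma$-complete and of size $\frakc=\omega_2$, witnesses the matching upper bound. Your note about working over a ground model of GCH is the right (standard) reading of ``the random model,'' and nothing further is needed.
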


Corollary \ref{cor:d_nik_gr}, together with the aforementioned fact that in the Miller model we have $\frakd>\fraknik=\frakgr$, yields the following independence result.

\begin{corollary}
Let $\frakx\in\{\fraknik,\frakgr\}$. Neither of the inequalities $\frakx\le\frakd$ and $\frakx\ge\frakd$ is provable in ZFC.\noproof
\end{corollary}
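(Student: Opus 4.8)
The plan is simply to assemble the two consistency results that have already been recorded in this section. For one direction I would use Corollary~\ref{cor:d_nik_gr}: in the random model the strict inequality $\frakd<\fraknik=\frakgr$ holds, so for either choice of $\frakx\in\{\fraknik,\frakgr\}$ we have $\frakd<\frakx$ there, i.e.\ the statement $\frakx\le\frakd$ fails in a model of ZFC; hence $\frakx\le\frakd$ is not a theorem of ZFC. For the opposite direction I would invoke the theorem of the authors from \cite{SZForExt}, quoted just before Corollary~\ref{cor:d_nik_gr}, according to which $\fraknik=\frakgr<\frakd$ holds in the Miller model; there $\frakx<\frakd$, so $\frakx\ge\frakd$ fails in that model, and therefore $\frakx\ge\frakd$ is not provable in ZFC either. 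Combining the two assertions yields the claim.

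There is essentially no obstacle here: both the random model and the Miller model are obtained by standard forcing constructions over a model of ZFC, and in each of them $\frakd$, $\fraknik$, $\frakgr$ are well defined (being trapped between $\aleph_1$ and $\frakc$); the only point that must actually be used is that in each model the relevant characteristics are \emph{strictly} separated, which is exactly what Corollary~\ref{cor:d_nik_gr} and the Miller-model computation supply. No further argument is required, which is the reason the statement is marked \noproof.
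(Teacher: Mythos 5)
Your proposal is correct and matches the paper's own argument exactly: the corollary is derived by combining Corollary~\ref{cor:d_nik_gr} (the random model gives $\frakd<\fraknik=\frakgr$, refuting $\frakx\le\frakd$) with the Miller-model result $\fraknik=\frakgr<\frakd$ from \cite{SZForExt}, which refutes $\frakx\ge\frakd$. Nothing further is needed.
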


A close relative to the numbers $\fraknik$ and $\frakgr$ is \textit{the convergence number} $\frakz$ defined as follows:
\begin{align*}
\frakz=\min\big\{w(K)\colon\ K\text{ is an infinite }&\text{compact space}\\&\text{with no non-trivial convergent sequences}\big\}.
\end{align*}
Here $w(K)$ denotes the weight of $K$. The number $\frakz$ was studied e.g. in Brian and Dow \cite{BD19}. It is immediate that $\frakz\le\fraknik$ and $\frakz\le\frakgr$. By the result of Dow and Fremlin \cite{DF07} stating that in any random extension $V[G]$, for every $\sigma$-complete Boolean algebra $\aA\in V$, its Stone space $St(\aA)$ does not contain any non-trivial convergent sequences, we have that $\frakz=\omega_1<\frakc$ in the random model. Thus, by Corollary \ref{cor:d_nik_gr}, we immediately get also the following fact.

\begin{corollary}
$\omega_1=\frakz<\fraknik=\frakgr=\frakc$ holds in the random model.\noproof
\end{corollary}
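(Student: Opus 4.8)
The plan is to obtain this corollary as an immediate consequence of Corollary~\ref{cor:d_nik_gr} together with the Dow--Fremlin result recalled above, once the relevant cardinal arithmetic of the random model has been made explicit. Recall that the random model is the extension $V[G]$ obtained by forcing with $\B(\omega_2)$ over a ground model $V$ satisfying CH; since $\B(\omega_2)$ is c.c.c.\ it preserves all cardinals, and since it is $\oo$-bounding it adds no dominating real, so in $V[G]$ we have $\frakc=\omega_2$ while $\frakb=\frakd=\omega_1$.

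First I would verify that $\frakz=\omega_1$ in $V[G]$. For the upper bound, apply the theorem of Dow and Fremlin \cite{DF07} to the $\sigma$-complete ground model algebra $\aA=\wo\cap V$: in $V[G]$ the Stone space $St(\aA)$ carries no non-trivial convergent sequence. As $\aA$ is infinite and CH holds in $V$, we have $w\big(St(\aA)\big)=|\aA|=\omega_1$, so $St(\aA)$ witnesses $\frakz\le\omega_1$. The reverse inequality $\frakz\ge\omega_1$ is a ZFC triviality, since an infinite compact space of countable weight is metrizable and hence contains a non-trivial convergent sequence. Thus $\frakz=\omega_1$.

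Next I would pin down $\fraknik$ and $\frakgr$. On the one hand, $\wo$ is $\sigma$-complete, hence has both the Nikodym and the Grothendieck property by the classical theorems of Nikodym, Dieudonn\'e and Grothendieck, and $|\wo|=\frakc$; therefore $\fraknik\le\frakc$ and $\frakgr\le\frakc$. On the other hand, Corollary~\ref{cor:nik_gr_d} (equivalently Corollary~\ref{cor:d_nik_gr} combined with the arithmetic above) gives that in $V[G]$ no infinite Boolean algebra of size $<\omega_2=\frakc$ has either property, so $\fraknik\ge\frakc$ and $\frakgr\ge\frakc$. Hence $\fraknik=\frakgr=\frakc$.

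Combining the two computations, in the random model $\frakz=\omega_1$ while $\fraknik=\frakgr=\frakc=\omega_2$, which is exactly $\omega_1=\frakz<\fraknik=\frakgr=\frakc$. I do not expect any real obstacle in carrying this out; the only points deserving a word of justification are the use of CH in $V$ (so that the Stone space of $\wo\cap V$ has weight $\omega_1$) and the folklore bound $\frakz\ge\omega_1$, both entirely routine.
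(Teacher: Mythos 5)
Your argument is exactly the one the paper intends: the upper bound $\frakz\le\omega_1$ comes from Dow--Fremlin applied to $\wo\cap V$ (of weight $\omega_1$ by CH in $V$), and $\fraknik=\frakgr=\frakc$ follows from Corollary~\ref{cor:nik_gr_d} (lower bound) together with the $\sigma$-completeness of $\wo$ (upper bound). The paper states this corollary with no proof, deriving it immediately from the Dow--Fremlin result and Corollary~\ref{cor:d_nik_gr}; you have merely filled in the same routine details, so there is nothing to correct.
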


Dow \cite{Dow21} proved that in the Laver model there are (totally disconnected) compact spaces of weight $\omega_1$ containing no non-trivial convergent sequences, so $\frakz=\omega_1$ holds in this model. On the other hand, it is well known that the bounding number $\frakb$ has value $\omega_2$ in the Laver model, and it was proved by the first author in \cite[Proposition 3.2]{DSNik} that $\frakb\le\fraknik$ holds in ZFC. We thus get the following corollary.

\begin{corollary}\label{cor:z_nik_laver}
$\omega_1=\frakz<\fraknik=\omega_2$ holds in the Laver model.\noproof
\end{corollary}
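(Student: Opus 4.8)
The plan is to combine two facts that are essentially already in hand. First, Dow's theorem (\cite{Dow21}) gives, in the Laver model, a totally disconnected compact space $K$ of weight $\omega_1$ containing no non-trivial convergent sequences; taking $\aA$ to be the Boolean algebra of clopen subsets of $K$ (equivalently, via Stone duality, an algebra of size $\omega_1$ whose Stone space is $K$), one obtains an infinite Boolean algebra of size $\omega_1$ whose Stone space has no non-trivial convergent sequences. Actually, the cleaner formulation is simply that $\frakz=\omega_1$ holds in the Laver model, which is exactly Dow's result. Since $\frakz\le\fraknik$ in ZFC (a non-trivial convergent sequence in $St(\aA)$ immediately produces a pointwise null, non-weak* null sequence of two-point measures $\frac12(\delta_{x_{2n}}-\delta_{x_{2n+1}})$), we get $\fraknik=\omega_1$ in the Laver model, hence in particular $\fraknik\le\omega_1$.

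For the reverse inequality I would invoke the inequality $\frakb\le\fraknik$, proved by the first author in \cite[Proposition 3.2]{DSNik}, together with the standard fact that $\frakb=\omega_2$ in the Laver model (Laver forcing is a countable support iteration of length $\omega_2$ of a proper, $\oo$-bounding... no---it adds dominating reals, and the iteration keeps the ground-model reals from being dominated only up to stage $\omega_2$, so $\frakb=\frakd=\omega_2$ there; this is classical, see e.g.\ Bartoszy\'nski--Judah or Blass \cite{BlaHBK}). Combining, $\omega_2=\frakb\le\fraknik\le\omega_1$ would be a contradiction, so in fact what we conclude is $\fraknik=\omega_2$ (note $\frakc=\omega_2$ in the Laver model, so $\fraknik\le\frakc=\omega_2$, and $\frakb\le\fraknik$ gives $\fraknik\ge\omega_2$). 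Meanwhile the same space $K$ from Dow's theorem witnesses $\frakz=\omega_1$, and since $\omega_1<\omega_2$ we get the strict inequality $\frakz<\fraknik$.

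So the proof is really just: (i) cite Dow \cite[Theorem 11]{Dow21} for $\frakz=\omega_1$ in the Laver model; (ii) cite the classical computation $\frakb=\omega_2$ in the Laver model; (iii) cite \cite[Proposition 3.2]{DSNik} for $\frakb\le\fraknik$ in ZFC, which forces $\fraknik\ge\omega_2$, and since trivially $\fraknik\le\frakc=\omega_2$ we get $\fraknik=\omega_2$; (iv) assemble $\omega_1=\frakz<\fraknik=\omega_2$. The only genuine content lies in the cited results themselves; nothing new needs to be done here, which is why the statement carries \texttt{\textbackslash noproof}. The "main obstacle", such as it is, is purely bookkeeping: making sure the versions of Dow's theorem and the Laver-model computation of $\frakb$ that we cite are stated for the same forcing (the countable support iteration of Laver forcing of length $\omega_2$ over a model of CH), but both are entirely standard and already referenced in the surrounding text.
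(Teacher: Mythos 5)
Your final assembly---(i) Dow's result giving $\frakz=\omega_1$ in the Laver model, (ii) the classical fact $\frakb=\omega_2$ there, and (iii) $\frakb\le\fraknik$ from \cite[Proposition 3.2]{DSNik} combined with $\fraknik\le\frakc=\omega_2$---is exactly the argument the paper gives in the paragraph preceding the corollary, so the proposal is correct and takes the same route. One caveat: the first paragraph's inference that $\frakz\le\fraknik$ together with $\frakz=\omega_1$ yields $\fraknik=\omega_1$ runs the inequality the wrong way (it only gives the trivial lower bound $\fraknik\ge\omega_1$); you rightly discard this later, but that false start should simply be deleted.
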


We do not know the value of $\frakgr$ in the Laver model (cf. Question \ref{ques:laver_gr}).

\section{Open questions\label{sec:open}}

\subsection{Dominating reals and the Grothendieck property}

In the introductory section we admitted that, contrary to the case of the Nikodym property, we do not know whether adding dominating reals kills the Grothendieck property of ground model $\sigma$-complete Boolean algebras.

\begin{question}\label{ques:laver_gr}
Let $\aA\in V$ be an infinite $\sigma$-complete Boolean algebra. Assume that $G$ is a generic filter for the Laver forcing over $V$. Does $\aA$ have the Grothendieck property in $V[G]$?
\end{question}

\begin{question}\label{ques:dom_gr}
Does there exist a notion of forcing $\PP$ adding dominating reals and such that in any $\PP$-generic extension $V[G]$ any ground model $\sigma$-complete Boolean algebra has the Grothendieck property?
\end{question}

An affirmative answer to Question \ref{ques:laver_gr} would yield a new consistent example of a Boolean algebra with the Grothendieck property but without the Nikodym property. Recall that while there are many consistent or even ZFC examples of Boolean algebras with the Nikodym property but without the Grothendieck property, see e.g. \cite{Sch82}, \cite{GW83}, \cite{SZMinGen}, so far only one example of an algebra with the Grothendieck property and without the Nikodym property has been found---the construction was obtained by Talagrand \cite{Tal84} under the assumption of the Continuum Hypothesis. 

\subsection{Eventually different reals}

Let $V[G]$ be a $\PP$-generic extension of the ground model $V$ for some forcing notion $\PP$. If $f\in\oo\cap V[G]$ is a dominating real, then obviously it is \textit{an eventually different real}, that is, for every $g\in\oo\cap V$ the set $\big\{n\io\colon\ f(n)=g(n)\big\}$ is finite. The converse does not hold, as e.g. the random forcing or the eventually different forcing both add eventually different reals but not dominating reals. Since the latter forcing adds Cohen reals, too, by Theorems \ref{theorem:cohen} and \ref{theorem:random} both notions kill the Nikodym and Grothendieck properties of infinite ground model Boolean algebras. Thus, it seems that all the standard classical notions adding eventually different reals kill at least one of the properties. It is also a folklore fact that a forcing adds an eventually different real if and only if it makes the ground model reals meager, hence, trivially by the assumption, the notions of forcing considered in \cite{SZForExt} (cf. the third paragraph of Introduction), which are proved therein to preserve both the Nikodym property and the Grothendieck property of ground model $\sigma$-complete Boolean algebras, do not add eventually different reals. So it seems reasonable to ask whether adding an eventually different real is solely a reason that ground model Boolean algebras lose their Nikodym property (and, in the view of Question \ref{ques:dom_gr}, possibly also the Grothendieck property).

\begin{question}\label{ques:ev_diff_nik}
Does there exist a notion of forcing $\PP$ adding eventually different reals and such that in any $\PP$-generic extension $V[G]$ any ground model $\sigma$-complete Boolean algebra has the Nikodym property?
\end{question}

\begin{question}\label{ques:ev_diff_gr}
Does there exist a notion of forcing $\PP$ adding eventually different reals and such that in any $\PP$-generic extension $V[G]$ any ground model $\sigma$-complete Boolean algebra has the Grothendieck property?
\end{question}

\subsection{Cardinal characteristics $\fraknik$ and $\frakgr$}

We are not aware of any model in which the numbers $\fraknik$ and $\frakgr$ have different values. Thus, we pose the following question.

\begin{question}\label{ques:nik_gr_diff}
Is it consistent that $\fraknik<\frakgr$ or $\fraknik>\frakgr$?
\end{question}

Note that an affirmative answer to Question \ref{ques:laver_gr} would imply that $\omega_1=\frakgr<\fraknik=\frakc$ holds in the Laver model (cf. Corollary \ref{cor:z_nik_laver}).

\end{document}